\newtheorem{thm}{Theorem}[section]
\newtheorem{lem}[thm]{Lemma}
\newtheorem{prop}[thm]{Proposition}
\newtheorem{thma}{Theorem}
\newtheorem*{claim*}{Claim}
\theoremstyle{definition}
\newtheorem{dfn}[thm]{Definition}
\newtheorem{rem}[thm]{Remark}
\renewcommand{\tilde}{\widetilde}
\newcommand{\RR}{\mathbb{R}}
\newcommand{\NN}{\mathbb{N}}
\newcommand{\ZZ}{\mathbb{Z}}
\newcommand{\E}{\operatorname{E}} 
\newcommand{\F}{\operatorname{F}} 
\newcommand{\Ftp}[1]{\mathrm{F}_{#1}}
\newcommand{\Flg}[1]{\mathrm{FP}_{#1}}
\newcommand{\Hom}{\operatorname{Hom}}
\newcommand{\id}{\mathrm{id}}
\newcommand{\VR}[2]{\operatorname{VR}_{#1}({#2})}
\newcommand{\diff}[1]{\partial_{#1}} 
\newcommand{\chains}[3]{\operatorname{C}_{#1}(\VR{#2}{#3})} 
\newcommand{\homology}[3]{\operatorname{H}_{#1}(\VR{#2}{#3})} 
\newcommand{\chain}{\operatorname{C}}
\newcommand{\hlgy}{\operatorname{H}}
\DeclareMathOperator{\im}{im}
\DeclareMathOperator{\Map}{Map}
\DeclareMathOperator{\Ind}{Ind}
\title{Computer-assisted methods in Sigma-theory}
\author{Elisa Hartmann}
\begin{document}

\maketitle

\begin{abstract}
 We develop an algorithm for recognizing whether a character belongs to $\Sigma^m$. In order to apply it we just need to know that the ambient group is of type $\mathrm{FP}_m$ or of type $\mathrm{F}_2$ and that the word problem is solvable for this group. Then finite data is sufficient proof of membership in $\Sigma^m$, not just for the given character but also for a neighborhood of it.
\end{abstract}

\begin{spacing}{1.1}

\section{Introduction}

The two properties \emph{type $\Ftp m$} and \emph{type $\Flg m$} are well-known geometric invariants on groups. Type $\Ftp m$ is often considered the homotopical version while type $\Flg m$ is considered the homological version. Associated to them are directional versions $\Sigma^m(G)$ for homotopical and $\Sigma^m(G;\ZZ)$ for homological. They are often called \emph{Sigma-invariants} or \emph{BNSR-invariants} named after their inventors Bieri--Neumann--Strebel--Renz \cite{Bieri1988,Renz1988,Strebel2012}. We propose that the computer can assist with computing $\Sigma$-invariants.

The geometric intuition comes from metric spaces. Namely if a group $G$ is finitely generated, say by a subset $X$, then there is an assignment
\begin{align*}
 l:G&\to \ZZ_{\ge 0}\\
 g&\mapsto \min\{n\mid g=_Gx_1^{\pm}\cdots x_n^{\pm};x_i\in X\}
\end{align*}
called the \emph{word length according to $X$}. Then we can assign a left-invariant metric on $G$ in the following way: $d(g,h):=l(g^{-1}h)$. This metric of course depends on the choice of generating set $X$ but its geometry at infinity does not. So if $T$ is a metric space then $\E T$ denotes the free simplicial set on $T$. Namely its $q$-simplices are $\E T_q:=T^{q+1}$, $q=0,1,\ldots$ where
\begin{align*}
 d_i:\E T_q&\to \E T_{q-1}\\
 (t_0,\ldots,t_q)&\mapsto (t_0,\ldots,\hat t_i,\ldots,t_q)
\end{align*}
stands for the $i$th face map $i=0,\ldots,q$ and
\begin{align*}
 s_i:\E T_q&\to \E T_{q+1}\\
 (t_0,\ldots,t_q)&\mapsto (t_0,\ldots,t_i,t_i,\ldots,t_q)
\end{align*}
stands for the $i$th degeneracy map $i=0,\ldots,q$. Then for each $k\ge 0,q\in 
\ZZ_{\ge 0}$ we have a collection of $k$-small $q$-simplices
\[
 \Delta^q_k:=\{(t_0,\ldots,t_q)\mid d(t_i,t_j)\le k;i,j=0,\ldots,q\}
\]
The so obtained subcomplex $\VR k T$ of $\E T$ with $q$-simplices ${\VR k T }_q:=\Delta^q_k$ is also called the \emph{Vietoris--Rips complex for the value $k$}. If we let $k$ vary to $\infty$ then $(\VR k T )_{k\ge 0}$ form a filtration of $\E T$. For the homological part of this paper simplicial homology will play an important role. If $k\ge 0$ and $G$ is a group then $(\chain_q(\VR k G),\partial_q)$ denotes the simplicial chain complex assigned to the space $\VR k G$. Namely
\[
 \chains q k G=\ZZ[\Delta_k^q]
\]
and
\begin{align*}
 \diff q:\chains q k G&\to \chains {q-1} k G\\
 (g_0,\ldots,g_q)&\mapsto \sum_{i=0}^q(-1)^i(g_0,\ldots,\hat g_i,\ldots,g_q).
\end{align*}
Coincidently $\ZZ[\Delta^q_k]$ is also a free $G$-module and $(\ZZ[\Delta^q_k],\partial_q)$ form a filtration of the standard resolution $(\ZZ[G^{q+1}],\partial_q)=(\chain_q(\E G),\partial_q)$ of the constant $G$-module $\ZZ$. 

We are now ready to state two of our main results:
\begin{thma}
\label{thma:mulg}
  A group $G$ is of type $\Flg m$ if and only if there exists $K_0\ge 0$ and a chain endomorphism of $\ZZ G$-complexes $\mu_q:\ZZ[G^{q+1}]\to \ZZ[G^{q+1}]$ extending the identity on $\ZZ$ with $\im\mu_q\subseteq \chains q {K_0} G$ for $q=0,\ldots,m$.
 \end{thma}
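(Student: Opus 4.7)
The proof has two directions and rests on the observation that $\chains q K G = \ZZ[\Delta^q_K]$ is a finitely generated free $\ZZ G$-module. Indeed, since $G$ is finitely generated, the $K$-ball around the identity is finite, and every $G$-orbit of a $K$-small $q$-simplex contains a representative of the form $(1, g_1, \ldots, g_q)$ with all $g_i$ in this ball. Conversely, any finitely generated $\ZZ G$-submodule of $\ZZ[G^{q+1}]$ lies in $\chains q K G$ for $K$ large enough to bound the pairwise distances appearing in a finite generating set. Thus the condition on $\mu$ in the theorem is equivalent to $\im \mu_q$ being finitely generated over $\ZZ G$ for every $q \le m$, and the statement reduces to a Brown-style finiteness criterion for $\Flg m$.

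For the forward direction, assume $G$ is of type $\Flg m$, pick a partial resolution $F_m \to \cdots \to F_0 \onto \ZZ$ by finitely generated free $\ZZ G$-modules, and extend it arbitrarily to a full free resolution $F_*$. The fundamental lemma of homological algebra supplies $\ZZ G$-chain maps $\phi : \chain_*(\E G) \to F_*$ and $\psi : F_* \to \chain_*(\E G)$ extending $\id_\ZZ$, with $\psi \phi$ chain homotopic to the identity. The composition $\mu := \psi \circ \phi$ is a chain endomorphism of the bar resolution over $\id_\ZZ$ with $\im \mu_q \subseteq \psi(F_q)$ finitely generated for each $q \le m$; any $K_0$ bounding the generators of the $\psi(F_q)$ then suffices.

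For the converse, given $\mu$ and $K_0$, fix a $\ZZ G$-linear chain homotopy $h$ with $\id - \mu = \partial h + h \partial$ (which exists because $\chain_*(\E G)$ is a free resolution of $\ZZ$ and both $\mu$ and $\id$ extend $\id_\ZZ$). We build a partial resolution $P_0, \ldots, P_m$ of $\ZZ$ by finitely generated free $\ZZ G$-modules inductively, starting from $P_0 = \ZZ G$. The crucial identity: for a cycle $z \in \chain_{q-1}(\E G)$ lying in a finitely generated submodule, acyclicity of $\chain_*(\E G)$ produces $y \in \chain_q(\E G)$ with $\partial y = z$, and then
\[
z = \mu z + \partial h z = \partial(\mu y + h z),
\]
with $\mu y \in \chains q {K_0} G$ and $h z$ in a finitely generated $\ZZ G$-submodule of $\chain_q(\E G)$ (since $h$ is $\ZZ G$-linear and the source submodule is finitely generated). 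Hence at each stage the kernel of the preceding boundary map feeds into a finitely generated submodule of the next chain group, and one may take $P_q$ to be a finitely generated free $\ZZ G$-module surjecting onto it; after $m$ iterations this yields a partial free resolution witnessing $\Flg m$.

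The main obstacle lies in the converse: the chain homotopy $h$ need not respect any single Vietoris--Rips level, so one cannot simply set $P_q = \chains q K G$ for a uniform $K$. The inductive construction sidesteps this by building the $P_q$ one degree at a time as general finitely generated free $\ZZ G$-modules, exploiting at each step that $h$ applied to a finitely generated submodule has finitely generated image. Since only the degrees $q = 0, \ldots, m$ matter, finite generation propagates through all of them after finitely many steps.
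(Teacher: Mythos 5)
Your forward direction is correct and takes a genuinely different route from the paper. The paper proves $\Flg m \Rightarrow (\exists\,\mu)$ via the intermediate ``homological connecting vector'' $(n_0,\ldots,n_m)$: it builds $\mu_q$ degree by degree by filling in the cycle $\mu_{q-1}\partial_q(1,g_1,\ldots,g_q)\in\chains{q-1}{n_{q-1}}G$ inside $\chains{q}{n_q}G$ and extending $G$-equivariantly. You instead invoke the comparison theorem between the bar resolution and a free resolution with finite $m$-skeleton, and set $\mu:=\psi\circ\phi$. Both are valid; yours is closer to textbook homological algebra, the paper's stays inside the Vietoris--Rips framework it uses as its working definition of $\Flg m$ (Theorem~\ref{thm:metric}). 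One small inaccuracy in your setup paragraph: $\im\mu_q\subseteq\chains q{K_0}G$ is \emph{not} equivalent to $\im\mu_q$ being finitely generated, since $\ZZ G$ is not Noetherian and submodules of finitely generated modules need not be finitely generated; only the implication you actually use (f.g.\ $\Rightarrow$ contained in some $\chains q{K_0}G$) is true.

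Your converse starts from the same key identity the paper uses (your $z=\partial(\mu y+hz)$ is the paper's $\partial(\mu_m(c)+\lambda_{m-1}(z))=z$, with $\lambda$ playing the role of $h$ via Lemma~\ref{lem:chainendoreadable}), but the packaging has a real gap. You propose to build an abstract partial free resolution $P_m\to\cdots\to P_0\to\ZZ$ by finitely generated free modules, ``taking $P_q$ to surject onto the kernel of the preceding boundary map.'' That requires knowing $\ker(d_{q-1}\colon P_{q-1}\to P_{q-2})$ is finitely generated at each stage, which is precisely the finiteness you are trying to establish; the identity $z=\partial(\mu y+hz)$ lives in $\chain_*(\E G)$, not in the abstract complex $P_*$, so it does not control those kernels. (It does show, e.g., that $I(G)$ is finitely generated because every $0$-cycle bounds inside the f.g.\ module $\chains 1{K_0}G + h(\ZZ G)$, but one degree higher the argument gives only that cycles in a fixed f.g.\ submodule of $\chain_1(\E G)$ bound in some larger f.g.\ submodule, not that $\ker d_1\subseteq P_1$ is f.g.) The paper avoids this entirely by working with its metric definition of $\Flg m$: it is enough to show that every cycle in $\chains{q}{k}G$ bounds in $\chains{q+1}{L}G$ for some $L=L(k)$, and the identity $z=\partial(\mu c+\lambda z)$ gives exactly that, with $L=\max(K_0,n)$. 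If you recast your converse in those terms (or invoke a Wall-type ``finitely dominated implies finite type'' result, which would need to be cited and checked), it closes; as written, the step from the key identity to a finitely generated free resolution is missing.
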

 A proof for Theorem~\ref{thma:mulg} can be found in Theorem~\ref{thm:chainendo-fpm}.
 
 If $\chi:G\to \RR$ is a character on a group then the map $v:\ZZ[\Delta^q_k]\to \RR$ is defined on simplices $\sigma=(g_0,\ldots,g_q)\in \Delta^q_k$ by $v(\sigma)=\min_{0\le i\le q}\chi(g_i)$. Then $v$ maps a chain $\sum_{\sigma\in\Delta^q_k}n_\sigma \sigma$ to $\min_{\sigma:\sum n_\sigma\not=0}v(\sigma)$. The so defined mapping $v$ specifies a valuation on $\chains q k G$ extending $\chi$ \cite{Bieri1988}.
\begin{thma}
\label{thma:varphilg}
 If $G$ is a group of type $\Flg m$ and $\chi:G\to \RR$ a non-zero character then $\chi\in \Sigma^m(G;\ZZ)$ if and only if there exists a chain endomorphism of $\ZZ G$-complexes $\varphi_q:\chains q k G\to \chains q k G$ extending the identity on $\ZZ$ with $v(\varphi_q(c))-v(c)>0$ for every $c\in \chains q k G, q=0,\ldots,m$ and $k$ large enough.
\end{thma}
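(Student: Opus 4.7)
The plan is to combine Theorem~\ref{thma:mulg} with the valuation-theoretic characterization of $\Sigma^m(G;\ZZ)$ originating in Bieri--Neumann--Strebel--Renz. Fix $k \geq K_0$ with $K_0$ and $\mu_\bullet$ supplied by Theorem~\ref{thma:mulg}, so that $(\chains \bullet k G, \diff \bullet)$ serves as an essentially acyclic free $\ZZ G$-resolution of $\ZZ$ through degree $m$, equipped with a retraction $\mu_\bullet$ from the standard resolution. A crucial subtlety is that the word metric on $G$ is only left-invariant, so naive right-translation does not preserve $\chains q k G$; any $\ZZ G$-equivariant endomorphism raising valuation must therefore be constructed by hand.

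For the ``if'' direction, suppose $\varphi_\bullet$ is given. Since $\varphi$ and $\id$ are $\ZZ G$-linear chain lifts of $\id_\ZZ$ on the resolution $\chains \bullet k G$, there exists a $\ZZ G$-linear chain homotopy $h$ between them, possibly at an enlarged Vietoris--Rips parameter $k'\ge k$ which we absorb into $k$. Iterating on any cycle $z$ of degree $q \le m-1$ yields $z = \varphi^N(z) - \diff{}\bigl(\sum_{i=0}^{N-1} h(\varphi^i(z))\bigr)$. Because $\varphi$ is $\ZZ G$-equivariant and $\chains q k G$ is finitely generated as a $\ZZ G$-module, the strict valuation shift is in fact uniformly bounded below on $\ZZ G$-generators, so $v(\varphi^N(z))\to \infty$ as $N\to\infty$. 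Thus every low-valuation cycle becomes a boundary after a high-valuation correction, which is the essential acyclicity criterion for $\chi \in \Sigma^m(G;\ZZ)$.

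For the ``only if'' direction, assume $\chi \in \Sigma^m(G;\ZZ)$ and construct $\varphi_q$ by induction on $q$. As a base, set $\varphi_0((1))$ to be any element of $\ZZ G$ with augmentation $1$ whose support lies in $\chi^{-1}(0,\infty)$; this is possible because $\chi \neq 0$. For the inductive step, the obstruction to extending $\varphi_{q-1}$ to $\varphi_q$ on each $\ZZ G$-generator $(1, h_1, \ldots, h_q)$ is the cycle $\varphi_{q-1} \diff q (1, h_1, \ldots, h_q)$ of strictly positive valuation in $\chains{q-1}{k}G$; membership $\chi \in \Sigma^m(G;\ZZ)$ provides a filling by a chain of non-negative valuation at a possibly enlarged parameter $k'$, to which one appends an auxiliary small positive $\chi$-shift to restore strict positivity. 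Since $\chains q k G$ has only finitely many $\ZZ G$-orbits of generators in each of the finitely many degrees $q \le m$, the enlargements amalgamate into a single $k$ uniform over all generators.

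The main obstacle is producing \emph{strict} positivity of $v(\varphi_q(c)) - v(c)$ rather than mere non-negativity: the standard form of the Bieri--Renz criterion delivers fillings of non-negative valuation only, so one must combine these fillings with the auxiliary small positive $\chi$-shift in a way that is itself $\ZZ G$-equivariant and preserves the Vietoris--Rips filtration at a fixed parameter. Once this correspondence is pinned down, the inductive bookkeeping over finitely many orbits in each of the finitely many degrees $q \le m$ becomes routine.
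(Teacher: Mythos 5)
Your high-level plan is the right one (a chain-homotopy iteration for ``if'' and an inductive filling for ``only if''), but both directions have a genuine missing step.

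\textbf{The ``if'' direction.} The iteration $z = \varphi^N(z) \pm \diff{}\bigl(\sum_{i<N} h(\varphi^i z)\bigr)$ and the observation that $v(\varphi^N z)\to\infty$ are correct, but the conclusion ``every low-valuation cycle becomes a boundary after a high-valuation correction'' does not yet establish $\chi\in\Sigma^m(G;\ZZ)$. You have only shown that $z$ and the high-valuation cycle $\varphi^N(z)$ are homologous in $\chains{*}{l}{G_K}$ for a fixed lower bound $K$; you still must kill $\varphi^N(z)$ inside the valuation-filtered complex. The way the paper closes this gap is the substantive step: use type $\Flg m$ to choose a filling $c\in\chains{m}{l_2}{G}$ of $z$ in the \emph{unfiltered} complex (its valuation may be very negative), and observe that $\varphi^{\circ(n+1)}(c)$ is a filling of $z_{n+1}:=\varphi^{\circ(n+1)}(z)$ whose valuation is $\ge (n+1)R + v(c)$, hence $\ge 0$ for $n$ large. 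The explicit chain
\[
\tilde c \;=\; \sum_{i=0}^{n} \varphi_m^{\circ i}(\lambda(z)) \;+\; \varphi_m^{\circ(n+1)}(c)
\]
then satisfies $\diff{m}\tilde c = z$ and $v(\tilde c)\ge \min(K,0)$. Your proposal never produces a filling of $\varphi^N(z)$, so it does not close. (If you were secretly invoking the Novikov-completion picture where $\sum_N h(\varphi^N z)$ converges, that is a different criterion and needs to be translated back to the essential-triviality definition used here.)

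\textbf{The ``only if'' direction.} Appending ``an auxiliary small positive $\chi$-shift'' to a non-negative-valuation filling does not produce a chain map: right-translating a filling $\bar y$ of the cycle $\varphi_{q-1}\diff{q}\bar x$ yields a filling of the translated cycle $(\varphi_{q-1}\diff{q}\bar x)t$, not of $\varphi_{q-1}\diff{q}\bar x$, so the chain-map identity $\diff{q}\varphi_q = \varphi_{q-1}\diff{q}$ breaks. The paper avoids this by building strict positivity in from the start: since $\varphi_{q-1}$ raises valuation by $\chi(t)$, the obstruction cycle $\varphi_{q-1}\diff{q}\bar x$ lies in $\chains{q-1}{k}{G_{v(\bar x)+\chi(t)}}$; one then fills \emph{at that shifted level} (legitimate because $G_{v(\bar x)+\chi(t)}$ and $G_\chi$ are of the same coarse type, by Lemma~\ref{lem:GL} and the shift lemma), obtaining $\bar y\in\chains{q}{l}{G_{v(\bar x)+\chi(t)}}$ with $v(\bar y)\ge v(\bar x)+\chi(t)$. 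No post-hoc shift is needed.

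A smaller point: $(\chains{\bullet}{k}{G},\diff{})$ is not an acyclic resolution, so ``$\varphi$ and $\id$ are chain lifts of $\id_\ZZ$ on the resolution $\chains{\bullet}{k}{G}$'' is not quite right as stated. The chain homotopy $\lambda$ is constructed degree-by-degree using the acyclicity of the full standard resolution $(\ZZ[G^{q+1}],\diff{q})$ and the finite generation of $\chains{q}{k}{G}$, and it lands in a larger parameter (Lemma~\ref{lem:chainendoreadable}); you do acknowledge the parameter increase, but the source of acyclicity should be $\E G$, not $\VR k G$.
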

A proof for Theorem~\ref{thma:varphilg} can be found in Theorem~\ref{thm:sigmam-crit}. 

In order to compute the homological $\Sigma$-invariants of a group $G$ we first need to show it is of type $\Flg m$. Then finding a chain endomorphism $\varphi$ extending the identity on $\ZZ$ that respects the filtration and raises $\chi$-value proves that $\chi$ belongs to $\Sigma^m(G;\ZZ)$. We can also find a chain endomorphism $\mu$ extending the identity on $\ZZ$ that maps $\ZZ[G^{q+1}]$ to $\chains q {K_0} G$ for some $K_0$. The result is Algorithm~\ref{algo:sigmam} (see Section~\ref{sec:crit1}) with input a group $G$ of type $\Flg m$, some finite data assigned to the group and a character $\chi$ on $G$. One only needs to guess a finite amount of data to prove that $\chi\in \Sigma^m(G;\ZZ)$. If on the other hand $\chi\not\in \Sigma^m(G;\ZZ)$ the output is always ``maybe''. Our new $\Sigma^m$-criterion (Algorithm~\ref{algo:sigmam}) generalizes the well-known $\Sigma^1$-criterion \cite{Strebel2012}. It looks also very similar to the $\Sigma^m$-criterion \cite[Theorem~4.1]{Bieri1988}: The setting for this criterion is a free resolution
\[
 \cdots\to F_m\to F_{m-1}\to \cdots\to F_0\to \ZZ 
\]
of the constant $G$-module $\ZZ$ with $F_m,\cdots,F_0$ finitely generated. Then 
$\chi\in \Sigma^m(G,\ZZ)$ if and only if there exists a chain endomorphism 
$\varphi:F_q\to F_q$ extending the identity on $\ZZ$ with $v(\varphi(x))-v(x)>0$ 
for every basis element $x\in F_q$ and $q=0,\ldots,m$. This results in another 
algorithm that computes $\Sigma$-invariants since $\varphi$ only needs to be 
assigned values on finitely many basis elements. Its input is a group of type 
$\Flg m$, a free resolution of $\ZZ$ with finite $m$-skeleton (which exists, 
since $G$ is of type $\Flg m$ but might be hard to compute) and a character 
$\chi$ on $G$. We hope a homological connecting vector (see 
Section~\ref{sec:crit1} for a definition) is easier to obtain than a free 
resolution of $\ZZ$ with finite $m$-skeleton, whence we think 
Algorithm~\ref{algo:sigmam} is the more promising choice. Also if we have found 
a chain endomorphism $\varphi$ which is a witness for a non-zero character 
$\chi$ to belong to $\Sigma^m$ then it is a witness for characters in a 
neighborhood $U(\varphi)$ of $\chi$ in $\Hom(G,\RR)$.

Now we discuss the claim in the abstract for the homological case.
\begin{thma}
\label{thma:algo1}
 Let $G$ be a group of type $\mathrm{FP}_m$ with solvable word-problem. Then there exists $k\ge0$ such that for every non-zero character $\chi:G\to \RR$ the following are equivalent:
 \begin{enumerate}
  \item $\chi\in\Sigma^m(G;\ZZ)$;
  \item there exists a chain endomorphism of $\ZZ G$-complexes $\varphi_q:\chains q k G\to \chains q k G$ extending the identity on $\ZZ$ with $v(\varphi_q(c))-v(c)>0$ for every $c\in \chains q k G$, $q=0,\ldots,m$;
  \item Algorithm~\ref{algo:sigmam} terminates with output ``yes''.
 \end{enumerate}
\end{thma}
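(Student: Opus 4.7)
The strategy is to deduce the equivalence (1)$\Leftrightarrow$(2) from Theorem~\ref{thma:varphilg} by upgrading its ``$k$ large enough'' clause to a uniform bound valid for all non-zero $\chi$, and then to relate (2) to Algorithm~\ref{algo:sigmam} via an enumeration made effective by the solvable word problem.

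For (1)$\Rightarrow$(2), Theorem~\ref{thma:varphilg} produces, for each non-zero $\chi\in\Sigma^m(G;\ZZ)$, a chain endomorphism $\varphi^{\chi}_q$ on $\chains q {k^\chi} G$ for some threshold $k^\chi$ depending a priori on $\chi$. To select a uniform $k$, I would use that $G$ is of type $\Flg m$ together with Theorem~\ref{thma:mulg}: this yields a chain endomorphism $\mu_q$ of $\ZZ[G^{q+1}]$ with image in $\chains q {K_0} G$, where $K_0$ depends only on $G$. The plan is to combine $\varphi^{\chi}$ with $\mu$ and with a chain homotopy from $\mu$ to $\id$ (which exists by the standard comparison theorem for free resolutions) so as to transport the witness down to a complex of fixed radius $k$ depending only on $K_0$ and the geometry of $G$, not on $\chi$. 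The reverse direction (2)$\Rightarrow$(1) is a direct specialization of Theorem~\ref{thma:varphilg} applied at this $k$.

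For (2)$\Leftrightarrow$(3), I observe that $\chains q k G$ is finitely generated as a $\ZZ G$-module: there are only finitely many $G$-orbits of $q$-simplices of diameter at most $k$, and solvable word problem provides an effective way to list orbit representatives. A candidate chain endomorphism is then specified by finite data, namely its values on orbit representatives for $q=0,\ldots,m$, each value being a finite $\ZZ$-linear combination in $\chains q k G$. Verifying that such a candidate is a chain map extending the identity on $\ZZ$ and strictly raises $\chi$-values on generators reduces to a finite collection of word-problem queries and inequality tests, each of which terminates. Algorithm~\ref{algo:sigmam} enumerates these candidates: if (2) holds, it eventually hits a valid witness and halts with output ``yes'', and conversely every ``yes'' output exhibits a witness satisfying (2).

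The main obstacle is the uniform choice of $k$ in (1)$\Rightarrow$(2). The chain endomorphism $\mu$ from Theorem~\ref{thma:mulg} is not a priori $\chi$-raising and may even lower $\chi$-values, so one cannot just conjugate $\varphi^\chi$ by $\mu$. The delicate part is to set up the composition so that strict $\chi$-raising is preserved, likely by iterating $\varphi^\chi$ enough times (using that it raises $\chi$-value by a strictly positive amount on each simplex) before absorbing the valuation loss coming from $\mu$ via its chain homotopy to $\id$. This valuation bookkeeping is the technical heart of the argument.
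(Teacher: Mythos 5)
Your proposal matches the paper's proof in its essentials: the uniform radius $k$ is taken from a homological connecting vector (equivalently, the compression map $\mu$ of Theorem~\ref{thma:mulg}); the $\chi$-dependent witness $\varphi^{\chi}$ supplied by Theorem~\ref{thma:varphilg} lives at some radius $l$ that may exceed $k$; and the uniform witness is obtained by composing $\mu$ with a high iterate $(\varphi^{\chi})^{\circ (i+1)}$, choosing $i$ so that $(i+1)\chi(t)$ dominates the (bounded, possibly negative) amount $K$ by which $\mu$ can drop valuation on the finitely many $G$-orbit representatives of simplices of radius at most $l$. Your equivalence $(2)\Leftrightarrow(3)$ via finite data on orbit representatives and word-problem queries is also the paper's argument. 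One small correction: the chain homotopy from $\mu$ to $\id$ plays no role in this step of the paper's proof --- the valuation loss of $\mu$ is bounded directly by taking the minimum of $v(\mu_q(\sigma))-v(\sigma)$ over the finitely many generating simplices, and the composite $\mu_{l,*}\circ(\varphi^{\chi}_{l,*})^{\circ(i+1)}\circ\iota_{kl}$ is already a chain endomorphism of $\chains{*}{k}{G}$ extending $\id_{\ZZ}$, no homotopy needed. (Chain homotopies do enter, but earlier, inside the proofs of Theorems~\ref{thma:mulg} and~\ref{thma:varphilg} themselves.) With that one simplification your sketch is the paper's proof.
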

A proof for Theorem~\ref{thma:algo1} can be found in Theorem~\ref{thm:algo1}. Note that in general $\Sigma^m$ is Turing undecidable \cite{Cavallo2017}. If we do know the word problem in that group is Turing decidable and that number $k\ge 0$ which is data obtained when one computes type $\mathrm{FP}_m$ then $\Sigma^m(G;\ZZ)$ is Turing recognizable.

Now we discuss the homotopical version. If $S,T$ are simplicial sets then $\Map(S,T)$ denotes the set of simplicial maps $S'\to T$ with $S'$ a refinement of $S$ obtained by a finite number of barycentric refinements. 
\begin{thma}
\label{thma:mutp}
A group $G$ is of type $\Ftp m$ if and only if there exists $K\ge 0$ and a $G$-equivariant map $\mu\in \Map(\E G^{(m)},\E G^{(m)})$ with image in ${\VR K G}^{(m)}$.
\end{thma}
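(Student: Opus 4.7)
The plan is to mirror the chain-level argument of Theorem~\ref{thma:mulg} at the level of simplicial sets and geometric realizations, exploiting the fact that a simplicial map preserves not just chain-theoretic but also homotopy-theoretic data.

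For the ``only if'' direction, suppose $G$ is of type $\Ftp m$. Then $G$ admits an $(m-1)$-connected free $G$-CW-complex $Y$ with finitely many $G$-orbits of cells in each dimension $\le m$. Cocompactness of $Y^{(m)}$ supplies a uniform diameter bound on orbit representatives, so $Y^{(m)}$ embeds $G$-equivariantly into $\VR K G^{(m)}$ for some $K\ge 0$; after possibly enlarging $K$, $\VR K G^{(m)}$ itself is $(m-1)$-connected. I would then produce a $G$-equivariant continuous map $|\E G^{(m)}|\to |\VR K G^{(m)}|$ by equivariant obstruction theory, built skeleton by skeleton, with the obstruction at dimension $q\le m-1$ vanishing because $\pi_q(|\VR K G^{(m)}|)=0$. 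A $G$-equivariant simplicial approximation, carried out after a finite number of barycentric refinements of $\E G^{(m)}$, realizes this continuous map as the required $\mu\in\Map(\E G^{(m)},\E G^{(m)})$.

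For the ``if'' direction, suppose $\mu$ is given. Composing with iterated chain subdivision, $\mu$ induces a $\ZZ G$-chain endomorphism of $(\chain_q(\E G))_{q\le m}$ extending $\id_\ZZ$ with image in $\chains q K G$, so Theorem~\ref{thma:mulg} already yields type $\Flg m$. To strengthen this to $\Ftp m$, I would show directly that $|\VR K G^{(m)}|$ is $(m-1)$-connected. Given $\alpha\in\pi_i(|\VR K G^{(m)}|)$ with $i<m$, the class $\iota_*\alpha$ vanishes in $\pi_i(|\E G^{(m)}|)=0$, so $\iota\circ\alpha$ bounds a disk $\beta$ in $|\E G^{(m)}|$; applying $|\mu|$ to $\beta$ returns a null-homotopy of $(\mu\circ\iota)\circ\alpha$ in $|\VR K G^{(m)}|$. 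The remaining task is to show $\mu\circ\iota\simeq_G\id$ on the $(m-1)$-skeleton of $|\VR K G^{(m)}|$, lifting skeleton by skeleton the equivariant homotopy between $\iota\circ\mu\circ\iota$ and $\iota$ that exists in the contractible space $|\E G|$. Once $|\VR K G^{(m)}|$ is known to be $(m-1)$-connected, the standard characterization gives type $\Ftp m$, since $\VR K G^{(m)}$ is a free cocompact $G$-CW-complex of dimension $\le m$.

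The main obstacle is the homotopy-lifting step in the backward direction: the obvious $G$-equivariant homotopy $\iota\mu\simeq\id$ exists in the contractible $|\E G|$ but need not be contained in $|\VR K G^{(m)}|$, and lifting it skeletonwise requires an induction whose obstructions are controlled precisely by the $(m-1)$-connectivity one is trying to establish. A secondary technical obstacle, in the forward direction, is the $G$-equivariant simplicial approximation for the non-locally-finite $\E G^{(m)}$, which has infinitely many $G$-orbits of simplices; the notion of ``finite number of barycentric refinements'' must be flexible enough to subdivide each orbit representative to the mesh dictated by $K$.
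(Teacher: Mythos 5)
Your plan diverges from the paper's in both directions, and the divergence introduces real gaps.

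For the forward direction, the pivotal step is the assertion that ``after possibly enlarging $K$, $\VR K G^{(m)}$ itself is $(m-1)$-connected.'' This does not follow from $G$ being of type $\Ftp m$. The working definition (Definition~\ref{dfn:finiteprops-metric} and Theorem~\ref{thm:metric}) is that the ind-system $(\pi_q(\VR k G))_k$ is \emph{essentially trivial} for $q<m$, i.e.\ for every $k$ there is $l\ge k$ with $\pi_q(\VR k G)\to\pi_q(\VR l G)$ zero. This leaves room for $\pi_q(\VR l G)$ itself to be nontrivial at every scale, with new classes appearing and dying in a never-stabilizing cascade; nothing in the hypothesis forces any single $\VR K G^{(m)}$ to be $(m-1)$-connected. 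The paper avoids this entirely: the proof of Theorem~\ref{thm:muhomotopy} introduces a \emph{homotopical connecting vector} $(k_0,\ldots,k_m)$, which records only that each $\pi_{q-1}(\VR{k_{q-1}}G)\to\pi_{q-1}(\VR{k_q}G)$ is zero, and then constructs $\mu$ simplex by simplex on orbit representatives $(1_G,x_1,\ldots,x_q)\in\Delta_k^q$: the boundary $\mu_{q-1}\circ\sigma\circ\partial$ lands in $\VR{k_{q-1}}G$ and bounds in the \emph{larger} complex $\VR{k_q}G$, and equivariant extension finishes. No obstruction theory, no global connectivity of any $\VR K G$, and no $G$-CW model $Y$ needs to be imported; the construction also tells you exactly which $K_0$ works, which is what the algorithms later rely on. (Your appeal to an equivariant embedding $Y^{(m)}\hookrightarrow\VR K G^{(m)}$ is also stronger than warranted; at best one gets a $G$-map, and even this requires a small argument you haven't supplied.)

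For the backward direction you have correctly isolated the real difficulty — the equivariant contracting homotopy of $\iota\mu\simeq\id$ lives a priori in the contractible but unbounded $|\E G|$ — but you leave it unresolved and suggest an induction whose obstructions are ``controlled precisely by the $(m-1)$-connectivity one is trying to establish,'' which is circular. The paper resolves it unconditionally in Lemma~\ref{lem:homotopicalhomotopy} by a pure cocompactness/finiteness argument: $\VR k G^{(m)}$ has finitely many $G$-orbits of simplices, so the homotopy is built on orbit representatives over $\Delta^q\times\Delta^1$ and then extended equivariantly; each step produces a finite (hence bounded) image, so the homotopy lands in some $\VR l G^{(m+1)}$. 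With this lemma in hand, for a sphere $\tilde\alpha$ in $\VR k G^{(m)}$ the paper pushes the contracting homotopy from $\E G$ through $\mu$ to get a null-homotopy of $\mu\circ\tilde\alpha$ in $\VR{K_0}G$, and uses the lemma to connect $\mu\circ\tilde\alpha$ to $\tilde\alpha$ inside $\VR{L_0}G$, concluding that $\pi_{m-1}(\VR k G)$ vanishes in $\pi_{m-1}(\VR{\max(k,K_0,L_0)}G)$. Note the conclusion is essential triviality of the ind-system, not connectivity of any single complex; aiming for the latter, as your plan does, both overshoots and lacks a mechanism. Finally, routing through Theorem~\ref{thma:mulg} to get $\Flg m$ first is a detour that doesn't help: $\Flg m$ plus the Hurewicz argument only upgrades to $\Ftp m$ once you already have $\Ftp 2$, so it does not bypass the homotopical work.
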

A proof for Theorem~\ref{thma:mutp} can be found in Theorem~\ref{thm:muhomotopy}.

If $\chi:G\to \RR$ is a character on a group and $\tau:=(g_0,\ldots,g_q)\in G^{q+1}$ a simplex then 
\[
v(\tau):=\min(\chi(g_0),\ldots,\chi(g_q))
\]
and if $S\subseteq \E G$ is a finite simplicial subset then 
\[
v(S):=\min_{\tau\in S}v(\tau).
\]
This defines a valuation on $\VR k G,\E G$ extending $\chi$.
\begin{thma}
\label{thma:varphitp}
 If $G$ is a group of type $\Ftp m$ and $\chi:G\to \RR$ a nonzero character then $\chi\in \Sigma^m(G)$ if and only if there exists a $G$-equivariant map $\varphi\in \Map({\VR k G }^{(m)},{\VR k G}^{(m)})$ with $v(\varphi(\sigma))-v(\sigma)>0$ for every $\sigma\in \Delta^m_k$, for each $k$ large enough. 
\end{thma}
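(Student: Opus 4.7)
The plan is to parallel the proof of Theorem~\ref{thma:varphilg}, replacing chain endomorphisms of the standard resolution with $G$-equivariant simplicial maps in the sense of $\Map$, and using Theorem~\ref{thma:mutp} in place of Theorem~\ref{thma:mulg}. I will fix $k\ge K$ so that the map $\mu\in\Map(\E G^{(m)},\E G^{(m)})$ furnished by Theorem~\ref{thma:mutp} lands inside $\VR{K}{G}^{(m)}\subseteq \VR{k}{G}^{(m)}$; both source and target of $\varphi$ will then live in $\VR{k}{G}^{(m)}$.

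For the implication $(\Rightarrow)$, I would invoke the Brown/Bieri--Renz topological characterization of $\Sigma^m(G)$: $\chi\in \Sigma^m(G)$ exactly when the filtration of $\E G^{(m)}$ by $\chi$-super-level subcomplexes is essentially $(m-1)$-connected in a $G$-equivariant sense. Since $G$ is of type $\Ftp m$, the action of $G$ on $q$-simplices of $\VR{k}{G}^{(m)}$ has finitely many orbits for $q\le m$. I would then build $\varphi$ by induction on the skeleton: on one representative from each orbit, choose an image with strictly larger $\chi$-value (possible by the essential connectivity), extend $G$-equivariantly, and pass to a barycentric subdivision of the domain where required, as allowed by $\Map$. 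Finiteness of orbit representatives up to dimension $m$ yields a uniform positive lower bound, giving $v(\varphi(\sigma))-v(\sigma)>0$ for every $\sigma\in\Delta^m_k$.

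For $(\Leftarrow)$, given such a $\varphi$ with uniform gap $\epsilon:=\inf_\sigma\bigl(v(\varphi(\sigma))-v(\sigma)\bigr)>0$, iteration gives $v(\varphi^n(\sigma))\ge v(\sigma)+n\epsilon$. The composite $\varphi^n\circ\mu$ is a $G$-equivariant map from (a suitable subdivision of) $\E G^{(m)}$ into the super-level subcomplex $\{v\ge s\}\subseteq \E G^{(m)}$ for $n$ large enough, and it is $G$-equivariantly homotopic to the identity on $\E G^{(m)}$ because $\E G$ is equivariantly contractible. This produces the Brown/Bieri--Renz witness for $\chi\in\Sigma^m(G)$.

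The main obstacle I anticipate is the bookkeeping around the $\Map$-formalism: composing two elements of $\Map$ requires passing to a common barycentric refinement, and the inductive skeletal construction of $\varphi$ must interleave equivariant extension with such refinements while preserving the uniform $\chi$-gap. One also has to fix a convention for the value of $v$ on vertices introduced by subdivision --- taking each new barycenter to inherit the minimum $\chi$-value of its simplex makes subdivision $v$-preserving --- and verify that iteration of $\varphi$ continues to raise $\chi$-values uniformly across repeated refinements. Once these technicalities are in place, the argument tracks the homological proof of Theorem~\ref{thma:varphilg} step by step.
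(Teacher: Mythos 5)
Your $(\Rightarrow)$ direction is in line with the paper: Theorem~\ref{thm:crit2} builds $\varphi$ skeleton by skeleton, choosing a filling of $\varphi_{q-1}\circ\sigma\circ\partial$ inside $\VR{k_q}{G_{v(\sigma)+\chi(t)}}$ for each orbit representative $\sigma=(1_G,g_1,\dots,g_q)$ and extending $G$-equivariantly, exactly the construction you outline. Your worry about the $\Map$-bookkeeping is legitimate but standard (the paper deals with it implicitly, e.g.\ in Lemma~\ref{lem:homotopicalhomotopy}).

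The $(\Leftarrow)$ direction has a genuine gap. The claim that $\varphi^n\circ\mu$ maps $\E G^{(m)}$ into a super-level subcomplex $\{v\ge s\}$ for a single $n$ is false: $\mu$ is $G$-equivariant, so its image lives at \emph{all} $\chi$-levels, and $\varphi^n$ raises each level by only $n\epsilon$. Thus $\varphi^n\circ\mu$ takes simplices with $v\ll 0$ to simplices that are still deep below $s$ -- the required $n$ grows without bound as $v(\sigma)\to-\infty$. What $\mu$ actually buys is control on \emph{diameter}, not on $\chi$-level. Moreover, ``$G$-equivariantly homotopic to the identity because $\E G$ is equivariantly contractible'' gives existence of a homotopy with no control on where it lives; for the essential-connectivity argument one needs a homotopy whose $v$-drop is uniformly bounded, and producing it is exactly the content of Lemma~\ref{lem:homotopicalhomotopy}. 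The paper's proof of this implication therefore does not pass through $\mu$ at all: it takes a sphere $\tilde\alpha$ in $\VR k{G_\chi}$, fills it in $\VR l G$ using only type $\Ftp m$, applies $\varphi^{\circ n}$ to the filling with $n$ chosen according to the valuation of \emph{that particular} filling, and then uses the $G$-equivariant cocompact homotopy from Lemma~\ref{lem:homotopicalhomotopy} to connect $\tilde\alpha$ with $\varphi^{\circ n}\circ\tilde\alpha$ inside a fixed super-level set $\VR L{G_K}$. You should incorporate that lemma (or an equivalent control on the homotopy) and let $n$ depend on the cycle; otherwise the argument does not go through.
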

A proof for Theorem~\ref{thma:varphitp} can be found in Theorem~\ref{thm:crit2}.

 In order to compute the homotopical $\Sigma$-invariants of a group $G$ we first need to know that it is of type $\Ftp m$. The strategy is similar to the homological case. Instead of working with chain endomorphisms extending the identity on $\ZZ$ we work with simplicial maps on an $n$-fold barycentric refinement of the domain. If $G$ is of type $\Ftp m$ a mapping $\varphi$ that respects the filtration and raises the $\chi$-value proves that $\chi\in \Sigma^m(G)$. Also there is a mapping $\mu$ that maps $\E G^{(m)}$ to ${\VR {K_0} G}^{(m)}$ for some $K_0\ge 0$ . The resulting algorithm (see Algorithm~\ref{algo:sigma2} in Section~\ref{sec:crit2}) can be compared with the $\Sigma^2$-criterion \cite{Renz1988} which can also be implemented by hand. Also in the homotopical case a mapping $\varphi$ that is a witness for a non-zero character $\chi$ to belong to $\Sigma^m$ then it is a witness for characters in a neighborhood $U(\varphi)$ of $\chi$ in $\Hom(G,\RR)$.
 
 The claim in the abstract also holds for homotopical $\Sigma$-invariants.
 \begin{thma}
\label{thma:algo2}
 Let $G$ be a group of type $\mathrm{F}_2$ and solvable word-problem. There exists some $k\ge1$ such that for every non-zero character $\chi:G\to \RR$ the following are equivalent:
 \begin{enumerate}
  \item $\chi\in\Sigma^2(G)$;
  \item there exists a $G$-equivariant mapping $\varphi\in\Map({\VR k G}^{(2)},\VR k G)$  with $v(\varphi(\sigma))-v(\sigma)>0$ for every $\sigma\in \Delta^2_k$;
  \item Algorithm~\ref{algo:sigma2} terminates with output ``yes''.
 \end{enumerate}
\end{thma}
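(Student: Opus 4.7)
The plan is to derive Theorem~\ref{thma:algo2} as the homotopical counterpart of Theorem~\ref{thma:algo1}: the equivalence $(1)\Leftrightarrow(2)$ will come from the criterion of Theorem~\ref{thma:varphitp}, while $(2)\Leftrightarrow(3)$ reduces to the correctness and fairness of Algorithm~\ref{algo:sigma2}, with the solvability of the word problem being what makes its combinatorial checks effective. First I would invoke Theorem~\ref{thma:mutp} with $m=2$: since $G$ is of type $\F_2=\Ftp 2$ there exist $K\ge 0$ and a $G$-equivariant map $\mu\in\Map(\E G^{(2)},\E G^{(2)})$ with image in ${\VR K G}^{(2)}$. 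Feeding this into Theorem~\ref{thma:varphitp} (and enlarging $K$ if necessary to the threshold produced there) yields a single constant $k\ge 1$, depending only on $G$, for which $(1)\Leftrightarrow(2)$ holds uniformly for every non-zero character $\chi$. I would want to be careful in the write-up that this constant really is independent of $\chi$; this should be visible from the proof of Theorem~\ref{thma:varphitp}, where $k$ is extracted from the $\Ftp 2$-data $\mu$ rather than from $\chi$.

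The implication $(3)\Rightarrow(2)$ is essentially by construction: Algorithm~\ref{algo:sigma2} only outputs ``yes'' after having explicitly exhibited finite data specifying a $G$-equivariant simplicial map $\varphi\in\Map({\VR k G}^{(2)},\VR k G)$ that strictly raises $v$ on every orbit representative of $\Delta^2_k$, and extending this data equivariantly gives the witness demanded by~(2). The main content is $(2)\Rightarrow(3)$. Given a witness $\varphi$ as in~(2), it lives on some $n$-fold barycentric refinement of ${\VR k G}^{(2)}$ and, by equivariance, is determined by its values on finitely many orbit representatives of simplices of dimension $\le 2$. Algorithm~\ref{algo:sigma2} enumerates candidate finite data of this form---a number $n$ of refinements together with choices of images for orbit representatives inside $\VR k G$---and for each candidate checks that the assignment extends to a well-defined equivariant simplicial map, that $v(\varphi(\sigma))-v(\sigma)>0$ on the finitely many orbit representatives of $\Delta^2_k$, and that the image lies inside $\VR k G$. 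Each such check reduces to finitely many equalities in $G$ and evaluations of $\chi$, hence is decidable because the word problem is solvable. By fairness of the enumeration, the finite data extracted from the hypothesised $\varphi$ is eventually tested, and the algorithm halts with ``yes''.

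The main obstacle I anticipate is twofold: first, to see that a single $k$ works uniformly in $\chi$ and simultaneously satisfies the hypotheses of both Theorem~\ref{thma:mutp} and Theorem~\ref{thma:varphitp}; and second, to verify in detail that every combinatorial check inside Algorithm~\ref{algo:sigma2}---extension of orbit data to a simplicial map, well-definedness on overlapping simplices, $k$-smallness of images, and the strict valuation inequality---reduces to finitely many word-problem queries on $G$. Once these points are settled, the three implications close up into the desired equivalence.
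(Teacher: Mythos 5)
There is a genuine gap in the way you handle $(1)\Leftrightarrow(2)$. You assert that, in the proof of Theorem~\ref{thma:varphitp}, the threshold $k$ is ``extracted from the $\Ftp 2$-data $\mu$ rather than from $\chi$,'' and hence can be fixed once and for all by suitably enlarging $K$. This is not what happens: in the proof of Theorem~\ref{thm:crit2} (= Theorem~\ref{thma:varphitp}) the map $\varphi_q$ is built inductively using a homotopical connecting vector for $G_\chi$, not for $G$, so the level $l$ at which the witness $\varphi$ lives does genuinely depend on $\chi$. There is no uniform bound on these $l$ as $\chi$ ranges over $\Sigma^2(G)$, so ``enlarging $K$'' cannot produce the single $k$ the theorem asserts.

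The paper's fix is exactly the ingredient your write-up omits. One fixes $k$ in advance as the maximum of a homotopical connecting vector for $G$ (this is the $\chi$-independent quantity), and then, whenever Theorem~\ref{thm:crit2} produces a witness at some level $l>k$, one composes with the $G$-equivariant map $\mu\in\Map(\VR l G,\VR l G)$ from Theorem~\ref{thm:muhomotopy} whose $2$-skeleton image lands in ${\VR k G}^{(2)}$. Because $\mu$ may decrease valuation by a bounded amount $K$, one first iterates the valuation-raising map $\varphi$ sufficiently often ($i$ times with $i\chi(t)\ge -K$) so that the composite $\mu\circ\varphi^{\circ i+1}\circ\iota_{kl}$ both lands in level $k$ and still strictly raises $v$; the symmetric composition $\varphi^{\circ i+1}\circ\mu$ is used in the reverse direction $(2)\Rightarrow(1)$ to pass from the fixed $k$ to arbitrary $n\ge k$ as required by Theorem~\ref{thm:crit2}. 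This $\mu$-and-iterate bookkeeping is the heart of Theorem~\ref{thm:algo2}, and your outline replaces it with an incorrect claim about Theorem~\ref{thma:varphitp}. A smaller point: your description of $(2)\Rightarrow(3)$ as a fair enumeration over all candidate data does not match Algorithm~\ref{algo:sigma2}, which fixes $\varphi_0(g)=gt$ and then, for each orbit representative $\sigma$, performs a single bounded \texttt{search} for a filler $\phi$ in the appropriate $\VR n{G_{\chi(t)+v(\sigma)}}$; termination of each such search follows from $\chi\in\Sigma^2(G)$ (via Theorem~\ref{thm:crit2}, statement~2), together with solvability of the word problem to make the search space effectively enumerable.
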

A proof for Theorem~\ref{thma:algo2} can be found in Theorem~\ref{thm:algo2}. Given a finite presentation of $G$ the number $k$ is easy to obtain. Given that number and Turing decidability of the word problem $\Sigma^2$ is Turing recognizable.
 
 There is a version of $\Sigma$-theory for locally compact Hausdorff groups \cite{Hartmann2024b,Hartmann2024c}. Some of the results obtained on locally compact $\Sigma$-invariants are also new for abstract groups:
 \begin{thma}
 \label{thma:ge}
 If $1\to N\to G\to Q\to 1$ is a short exact sequence of groups then
 \begin{enumerate}
  \item $N$ of type $\Flg m$ and $\bar\chi\in \Sigma^m(Q;\mathbb Z)$ implies $\chi\in \Sigma^m(G;\mathbb Z)$.
  \item $N$ of type $\Flg {m-1}$ and $\chi\in \Sigma^m(G;\mathbb Z)$ implies $\bar\chi\in \Sigma^m(Q;\mathbb Z)$.
  \item $N$ of type $\Ftp m$ and $\bar\chi\in \Sigma^m(Q)$ implies $\chi\in \Sigma^m(G)$.
  \item $N$ of type $\Ftp {m-1}$ and $\chi\in \Sigma^m(G)$ implies $\bar\chi\in \Sigma^m(Q)$.
  \end{enumerate}
\end{thma}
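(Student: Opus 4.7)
The plan is to prove each of the four implications separately, in each case applying the chain-endomorphism or simplicial-map criterion (Theorems~\ref{thma:mulg}, \ref{thma:varphilg}, \ref{thma:mutp}, \ref{thma:varphitp}) to both ends of the short exact sequence and then transferring the resulting witnesses through a set-theoretic section. Throughout I write $\pi:G\to Q$ for the projection, choose a section $s:Q\to G$ with $s(1)=1$, and use that the hypothesis $\chi=\bar\chi\circ\pi$ forces $\chi|_N\equiv 0$, so the valuation $v$ factors through $\pi$ and left multiplication by elements of $N$ is $v$-neutral.

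For the upward implications (1) and (3), let $\bar\varphi$ be a witness for $\bar\chi\in\Sigma^m(Q;\ZZ)$ (resp.\ $\Sigma^m(Q)$) obtained from Theorem~\ref{thma:varphilg} (resp.\ \ref{thma:varphitp}), and let $\mu^N$ be a witness for $N$ being of type $\Flg{m}$ (resp.\ $\Ftp{m}$) from Theorem~\ref{thma:mulg} (resp.\ \ref{thma:mutp}). Using free $G$-orbit representatives of the form $(1,g_1,\ldots,g_q)$ in $G^{q+1}$, I would define $\varphi^G$ on each representative by projecting to $Q$, applying $\bar\varphi$, and lifting the resulting $Q$-simplices to $G$ via $s$. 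Since $s$ is not a homomorphism, the lift introduces bounded $N$-valued errors in both the Vietoris--Rips bound and in $G$-equivariance; these errors are absorbed using $\mu^N$, which maps into a bounded Vietoris--Rips complex of $N$. The valuation increases strictly because $\bar\varphi$ raises $\bar\chi$-values and the $\mu^N$-corrections live in $N$, where $\chi$ vanishes.

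For the downward implications (2) and (4), one starts with a witness $\varphi^G$ and wishes to manufacture one on the $Q$-side. The natural candidate is: lift a $Q$-simplex to $G$ via $s$, apply $\varphi^G$, and project by $\pi$. The valuation condition is automatic since $v$ factors through $Q$ and $\pi$ is non-expanding on the relevant Vietoris--Rips scales. The subtle point is to arrange that this descends to a $Q$-equivariant chain map (resp.\ simplicial map). Here the hypothesis that $N$ is of type $\Flg{m-1}$ (resp.\ $\Ftp{m-1}$) furnishes a bounded, $N$-equivariant chain homotopy (resp.\ combinatorial fill) that corrects the section defect in dimensions up to $m-1$; an inductive construction on $q=0,\ldots,m$ then yields the required $\bar\varphi$.

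The main obstacle will be the downward direction, specifically the interplay between the three constraints that $\bar\varphi$ must satisfy simultaneously: $Q$-equivariance, the chain-map (resp.\ simplicial) property, and a strict gain in $\bar\chi$-valuation. The dimension shift from $\Flg{m}$ to $\Flg{m-1}$ is the familiar Bieri--Strebel shift and enters because the inductive filling uses an $N$-resolution one degree below $m$; the delicate task is to verify that this filling can be executed so that valuations increase strictly rather than merely stay bounded, which is where the $N$-equivariance and boundedness of $\mu^N$ (produced from the finiteness hypothesis on $N$) are essential.
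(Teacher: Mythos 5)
This theorem is not proved in the paper you were given: the paper states that items (1) and (2) are \cite[Theorem~E]{Hartmann2024b} and that items (3) and (4) are proved in \cite{Hartmann2024c}, and no argument is reproduced here. So there is no internal proof to compare your proposal against, and the question becomes whether your sketch is itself viable.

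As written it is not. The central difficulty you identify --- that a set-theoretic section $s:Q\to G$ is not a homomorphism --- is genuine, but the mechanism you propose for handling it does not close the gap. In the upward direction, the composite ``project to $Q$, apply $\bar\varphi$, lift by $s$, extend $G$-equivariantly'' fails to be a chain map: on a face $(g_1,\ldots,g_q)=g_1\cdot(1,g_1^{-1}g_2,\ldots)$ one has to compare $g_1\cdot s_*(\bar\varphi(\cdots))$ with $s_*(\pi(g_1)\cdot\bar\varphi(\cdots))$, and these differ by \emph{vertex-dependent} elements of $N$, not by a uniform translation. Observing that $\chi$ vanishes on $N$ only controls the valuation of these discrepancies, not their size in the word metric, and since $N$ is in general infinite the corrected simplices need not land in any $\VR{k'}{G}$. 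The witness $\mu^N$ from Theorem~\ref{thma:mulg}/\ref{thma:mutp} is a $\ZZ N$-chain endomorphism of $\ZZ[N^{q+1}]$ (resp.\ a $G$-map into $\E N$); it gives no device for interpolating between two $\ZZ G$-chains that differ by a non-constant $N$-perturbation, so the phrase ``these errors are absorbed using $\mu^N$'' is a claim, not a construction. Similarly, in the downward direction you note that $\pi\circ\varphi^G\circ s_*$ need not be $Q$-equivariant, attribute the fix to ``a bounded, $N$-equivariant chain homotopy that corrects the section defect in dimensions up to $m-1$'', and assert that this explains the $\Flg{m-1}$/$\Ftp{m-1}$ hypothesis --- but none of this is carried out, and it is precisely the part of the argument where both the dimension shift and the strict valuation gain have to be verified simultaneously. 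Until that inductive correction is spelled out and shown to preserve strict positivity of $v(\bar\varphi(c))-v(c)$, the proposal does not constitute a proof. A workable route would more likely pass through a Lyndon--Hochschild--Serre-type combination of an $N$-resolution with a $Q$-resolution (homological case) or through the fibration picture $N\to G_\chi\to Q_{\bar\chi}$ at the level of ind-spaces, rather than through pointwise lifting along a section.
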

Statement~1 and statement~2 of Theorem~\ref{thma:ge} are \cite[Theorem~E]{Hartmann2024b}. And statement~3 and statement~4 of Theorem~\ref{thma:ge} can be found in \cite{Hartmann2024c}.

We propose that Algorithm~1 and Algorithm~2 serve to be beneficial for $\Sigma$-theory. Ultimately if we know the ambient group is of type $\Flg m/\Ftp 2$ and if the word problem is solvable for this group then guessing finite data amounts to computing $\Sigma$-invariants. Classes of examples we have in mind, where we have type $\Ftp \infty$ and can compute the word problem are certain types of Artin groups \cite{Bux1999,Meier1998,Brieskorn1972,Varisco2021,Charney2008,Haettel2022,Kar2011,Meier1998,Meier1997,Almeida2015a,Almeida2018,Almeida2017,Almeida2015b} and hyperbolic groups \cite{Epstein2000,Bridson1999,Holt2001}. 

\subsubsection*{Acknowledgements}
Many thanks for helpful conversations go to Kai-Uwe Bux, Ilaria Castellano, Dorian Chanfi, Tobias Hartnick, José Quintanilha, Stefan Witzel and Xiaolei Wu.

This research was funded in part by DFG grant BU 1224/4-1
 within SPP 2026 Geometry at Infinity.
 
 We would also like to acknowledge the support of the Deutsche Forschungsgemeinschaft (DFG, German Research Foundation) –
Project-ID 491392403 – TRR 358.

\section{Definition}
 If $\mathcal C$ is a category then $\Ind(\mathcal C)$ describes the category of ind-objects in $\mathcal C$. Objects of this category are functors $X:\mathcal I\to \mathcal C$ where $\mathcal I$ is a small filtered category. We also write $(X_i)_i$ with $X_i:=X(i)$ and $\mathcal I$ is usually a poset. If $(X_i)_i$ and $(Y_j)_j$ are two ind-objects the set of morphisms between $(X_i)_i$ and $(Y_j)_j$ is given by
\[
 \Hom_{\Ind(\mathcal C)}((X_i)_i,(Y_j)_j)=\varprojlim_i\varinjlim_j \Hom_{\mathcal C}(X_i,Y_j)
\]
\cite[Chapter~8]{Grothendieck1963}. Namely a morphism between ind-objects $X:\mathcal I\to \mathcal C$ and $Y:\mathcal J\to \mathcal C$ is given by a map $\varepsilon:\mathcal I\to \mathcal J$ and a $\mathcal C$-morphism $\varphi_i:X_i\to Y_{\varepsilon(i)}$ for each $i\in \mathcal I$ such that for each $i\to i'\in \mathcal I$ there exists $j\in\mathcal J$ with $\varepsilon(i')\to j\in \mathcal J$ such that the diagram
\[
\xymatrix{
 X_i\ar[d]\ar[r]^{\varphi(i)}
 & Y_{\varepsilon(i)}\ar[dr]\\
 X_{i'}\ar[r]_{\varphi(i')}
 &Y_{\varepsilon(i')}\ar[r]
 &Y_j
 }
\]
commutes. Two such data $(\varepsilon,(\varphi_i)_i),(\delta,(\psi_i)_i)$ define the same morphism if for each $i\in \mathcal I$ there is some $j\in \mathcal J$ with $\varepsilon(i)\to j,\delta(i)\to j\in \mathcal J$ and
\[
\xymatrix{
&Y_{\varepsilon(i)}\ar[rd]&\\
 X_i\ar[ru]^{\varphi_i}\ar[r]_{\psi_i}
 &Y_{\delta(i)}\ar[r]
 &Y_j
 }
\]
commutes \cite{Abels1997}. If $\mathcal C$ admits all finite limts and finite colimits (for example if $\mathcal C$ is the category of abelian groups) and if $\varphi:\mathcal J\to \Ind(\mathcal C)$ is a finite diagram given by $\phi:\mathcal J\times \mathcal I\to \mathcal C$ then
\[
 \varprojlim \varphi\cong (\varprojlim_j \phi(j,i))_i
\]
and
\[
 \varinjlim \varphi \cong (\varinjlim_j \phi(j,i))_i
\]
\cite[Corollaire~8.9.2]{Grothendieck1963}. Also if $F:\mathcal C\to \mathcal D$ is a functor between categories then it naturally induces a functor $\Ind(F):\Ind(\mathcal C)\to \Ind(\mathcal D)$ by $\Ind(F)((A_i)_i)=(F(A_i))_i$ and $\Ind(F)(\varepsilon,(\varphi_i)_i)=(\varepsilon,F(\varphi_i)_i)$. If an ind-object $(A_i)_i\in \Ind(\mathcal C)$ is indeed isomorphic to some $B\in \mathcal C$ where $\mathcal C$ is naturally fully faithfully embedded in $\Ind(\mathcal C)$ (via $B\mapsto (B)_{1\in\{1\}}$) then
\[
 \varinjlim_i A_i\cong B.
\]
\cite[Proposition~6.3.1]{Kashiwara2006}. We are mainly interested in a particular filtration $(X_i)_i$ of a contractible space $X$ assigned to a group or more generally a metric space. Then its homotopy groups have the property
\[
\varinjlim_i \pi_q(X_i)=\begin{cases}
                         \ast & q=0\\
                         1 & q=1\\
                         0 & q\ge 2.
                        \end{cases}
\]
Basically the homotopical finiteness properties determine when $\pi_q(X_i)_i$ is isomorphic to $\ast,1,0$ as ind-objects. This is a property of the group or metric space. In the homological setting
\[
 \varinjlim_i \tilde H_q(X_i;\ZZ)=0
\]
for every $q\ge 0$. Then basically the homological finiteness properties determine when $(\tilde \hlgy_q(X_i;\ZZ))_i$ is isomorphic to $0$ in $\Ind(\mathrm{Ab})$. That is ultimatetively a metric property of the group or more generally of a metric space. We say a filtration $(A_i)_i$ is \emph{essentially trivial} if it is isomorphic to the final object when regarded as an ind-object. In $\mathrm{Ab}$ this basically means for every $i\in I$ there exists $j\in I$ with $A_i\to A_j$ the $0$-homomorphism. We say $A_i$ \emph{vanishes} in $A_j$. Two filtrations $(X_i)_i$ and $(Y_j)_j$ of the same object are called \emph{cofinal} if for every $i$ there exists some $j$ with $X_i\subseteq Y_j$ and for every $j$ there exists some $i$ with $Y_j\subseteq X_i$. It is easy to see that two cofinal filtrations are isomorphic as ind-objects.

In the course of this paper we regard countable groups and certain subsets of the group as metric spaces. Namely a finitely generated group is endowed with the word-length metric of some generating set. Any subset of it inherits the subspace metric from the ambient group. Given a not necessarily finitely generated countable group we can define a metric on it by embedding it in a finitely generated group -- which is always possible \cite{delaHarpe2000,Baumslag1993}.

If $X$ is a metric space and $c\ge 0$ then a finite sequence $a_0,\ldots, a_n\in X$ of points defines a \emph{$c$-path} if $d(a_i,a_{i+1})\le c$ for every $i=0,\ldots,n-1$. The space $X$ is \emph{$c$-coarsely connected} if every two points can be joined by a $c$-path. It is \emph{coarsely connected} if it is $c$-coarsely connected for some $c\ge 0$\cite{Cornulier2016}. A countable group is finitely generated if and only if it is coarsely connected \cite{Cornulier2016}.

\begin{lem}
 A non-empty metric space $X$ is $c$-coarsely connected if and only if $\homology 0 c X=\ZZ$.
\end{lem}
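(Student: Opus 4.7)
The plan is to unwind the definitions and identify $\hlgy_0(\VR c X)$ explicitly as the free abelian group on the set of $c$-coarse connected components of $X$; the lemma then follows by counting components.

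First I would observe that $\chain_0(\VR c X) = \ZZ[X]$, since $\Delta^0_c = X$ trivially (the pairwise-distance condition on a single point is vacuous). Likewise $\chain_1(\VR c X) = \ZZ[\Delta^1_c]$ is freely generated by the ordered pairs $(x,y) \in X\times X$ with $d(x,y)\le c$, and the boundary map $\diff 1$ sends $(x,y)$ to $y - x$. Hence
\[
 \hlgy_0(\VR c X) = \ZZ[X] \big/ \bigl\langle\, y - x \;:\; x,y\in X,\ d(x,y)\le c \,\bigr\rangle.
\]

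Next I would interpret the right-hand side. Defining $x\sim_c y$ to be the equivalence relation on $X$ generated by the relation $d(x,y)\le c$, a standard argument (iterated use of the relations $y-x$ for consecutive points in a $c$-path) shows that the quotient above is canonically $\ZZ[X/\!\sim_c]$. Moreover $x\sim_c y$ holds precisely when there is a finite sequence $x=a_0,a_1,\ldots,a_n=y$ with $d(a_i,a_{i+1})\le c$ for all $i$, i.e.\ precisely when $x$ and $y$ are joined by a $c$-path. Thus $X/\!\sim_c$ is exactly the set of $c$-coarse connected components of $X$.

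Finally, since $X$ is non-empty, $\ZZ[X/\!\sim_c]\cong\ZZ$ holds if and only if $X/\!\sim_c$ is a singleton, which by the previous paragraph is the case if and only if $X$ is $c$-coarsely connected. The whole argument is essentially a bookkeeping exercise, so I do not anticipate a genuine obstacle; the only point that requires a moment of care is verifying that the subgroup generated by all differences $y-x$ with $d(x,y)\le c$ coincides with the subgroup generated by differences of $c$-path-equivalent points, which is the usual telescoping $\sum_{i=0}^{n-1}(a_{i+1}-a_i) = a_n - a_0$.
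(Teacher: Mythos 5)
Your proof is correct, and it takes a different route from the paper's. The paper first shows that $X$ is $c$-coarsely connected if and only if $\VR c X$ is connected as a simplicial complex (by translating $c$-paths in $X$ into edge paths in $\VR c X$ and back), and then invokes the standard fact that a non-empty simplicial complex $K$ is connected if and only if $\hlgy_0(K)=\ZZ$. You instead compute $\hlgy_0(\VR c X)$ directly from the chain complex, identifying it as $\ZZ[X]/\langle y-x : d(x,y)\le c\rangle \cong \ZZ[X/\!\sim_c]$, and then observe this is $\ZZ$ precisely when there is a single $c$-coarse component. Your computation is slightly more informative: it exhibits $\hlgy_0(\VR c X)$ explicitly as the free abelian group on the set of $c$-coarse connected components, of which the lemma is the one-component special case. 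The paper's version is shorter because it offloads the algebra to the standard connectedness/$\hlgy_0$ dictionary, and the intermediate equivalence it establishes (coarse connectedness of $X$ versus topological connectedness of $\VR c X$) is useful elsewhere in the paper. Either approach is a correct proof of the stated lemma.
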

\begin{proof}
 Suppose $X$ is $c$-coarsely connected. Then for every two vertices $x,y\in \VR c X$ there exists a $c$-path in $X$ joining $x$ to $y$. This path also describes an edge path in $\VR c X$ joining the vertices $x,y$. Thus $\VR c X$ is connected. Conversely suppose $\VR c X$ is connected. Then for any two points $x,y\in X$ there exists an edge path in $\VR c X$ joining $x$ to $y$. The intermediate vertices of this path describe a $c$-path in $X$ joining $x$ to $y$. Thus $X$ is $c$-coarsely connected. 
 
 Now $\VR c X$ is connected and non-empty if and only if $\hlgy_0(\VR c X)=\ZZ$. 
\end{proof}

\begin{rem} 
 A metric space $X$ is \emph{coarsely simply connected}, see \cite[Definition~6.A.5]{Cornulier2016} for a definition, if and only if ($\VR c X$ is connected for some $c\ge 0$ and for every $c'\ge c$ there exists $c''\ge c'$ such that the homomorphism $\pi_1(\VR{c'}X)\to \pi_1(\VR{c''}X)$ induced by the inclusion $\VR{c'}X\subseteq \VR{c''}X$ is trivial) \cite[Proposition~6.C.2]{Cornulier2016}.
\end{rem}

\begin{dfn}
\label{dfn:finiteprops-metric}
  A metric space $X$ is said to be \emph{of type} $\Flg m$ if the direct system $\tilde \hlgy_q(\VR k X)_k$ of reduced simplicial homology groups is essentially trivial for each $q=0,\ldots,m-1$.
  
  The space $X$ is said to be \emph{of type} $\Ftp m$ if the direct system $\pi_q(\VR k X)_k$ of homotopy groups is essentially trivial for each $q=0,\ldots,m-1$.
\end{dfn}
Compare Definition~\ref{dfn:finiteprops-metric} with \cite[Definition~1.4]{Hartnick2022}.

\begin{thm}(\cites{Alonso1994}{Hartmann2024b}, 
Theorem~\ref{thm:homotopic_metric})
\label{thm:metric}
 If $G$ is a group then
 \begin{itemize}
  \item the group $G$ is of type $\Ftp m$ if and only if the metric space $(G,d)$ is of type $\Ftp m$;
  \item the group $G$ is of type $\Flg m$ if and only if the metric space $(G,d)$ is of type $\Flg m$.
 \end{itemize}
 If $\chi:G\to \mathbb R$ is a character on $G$ then 
 \begin{itemize}
  \item $\chi\in \Sigma^m(G;\mathbb Z)$ if and only if $(G_\chi,d)$ is of type $\Flg m$;
  \item $\chi\in \Sigma^m(G)$ if and only if $(G_\chi,d)$ is of type $\Ftp m$.
 \end{itemize}
\end{thm}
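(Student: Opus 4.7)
The plan is to deduce all four equivalences from a single mechanism: a Brown-style essential finiteness criterion applied to the Vietoris--Rips filtration $(\VR k G)_k$, viewed as an exhaustive filtration of the contractible simplicial set $\E G$ by free cocompact $G$-subcomplexes of finite type. The two $\Sigma$-invariant statements will then follow from the same argument applied to the submonoid $G_\chi$ with its subspace metric.

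For the group-type equivalences, I would first observe that since $G$ is finitely generated, every closed $k$-ball in $(G,d)$ is finite; consequently $G$ acts freely on $\VR k G$ with quotient having only finitely many simplices in each dimension, and $(\VR k G)_k$ exhausts the contractible simplicial set $\E G$, which is an $EG$ for $G$. Brown's criterion then asserts that $G$ is of type $\Flg m$ (respectively $\Ftp m$) if and only if the ind-object $(\tilde \hlgy_q(\VR k G))_k$ (respectively $(\pi_q(\VR k G))_k$) is essentially trivial for $q = 0, \ldots, m-1$. By Definition~\ref{dfn:finiteprops-metric} this is exactly the condition that $(G,d)$ is of type $\Flg m$ (respectively $\Ftp m$), which is the content of \cite{Alonso1994}.

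For the $\Sigma$-invariant equivalences, I would start from the classical Bieri--Neumann--Strebel--Renz characterization: $\chi \in \Sigma^m(G;\ZZ)$ (respectively $\Sigma^m(G)$) holds exactly when the submonoid $G_\chi$ acts on $\E G$ in a way that is ``essentially finite'' through degree $m$ when filtered by the $\chi$-valuation $v$. Translating this into Vietoris--Rips language, the relevant filtration is $(\VR k {G_\chi})_k$, i.e., the full subcomplexes of $\VR k G$ spanned by $G_\chi$-vertices, carrying a free $G_\chi$-action. After a cofinality check between the $\chi$-sublevel filtration used in the BNSR definition and the Vietoris--Rips filtration of $(G_\chi,d)$, essential triviality of one side in degrees below $m$ corresponds to that of the other, so by Definition~\ref{dfn:finiteprops-metric} the condition becomes that $(G_\chi,d)$ is of type $\Flg m$ (respectively $\Ftp m$).

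The main obstacle will be the homotopical $\Sigma$-case: because $G_\chi$ is only a submonoid and not a subgroup, Brown's criterion does not apply directly, and one must verify that the submonoid action is ``sufficiently free'' for an analogue to hold, as well as cope with basepoint issues for $\pi_q$. The cleanest resolution passes through the metric-space viewpoint adopted here, in which only the coarse geometry of the underlying metric space matters and the subgroup/submonoid distinction dissolves; this is carried out in detail in \cite{Hartmann2024b}. Once this hurdle is cleared, the remaining work is a routine cofinality argument between ind-objects of homotopy and homology groups.
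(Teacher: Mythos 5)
Your high-level plan matches the structure of the paper's argument: the group-type bullets are the content of \cite{Alonso1994} (a Brown-style criterion applied to the Vietoris--Rips filtration of $\E G$), the homological $\Sigma$-bullet is deferred to \cite{Hartmann2024b}, and the homotopical $\Sigma$-bullet is the genuinely new point, which you correctly flag as the obstacle. However, the crux of that remaining step is mischaracterized in your sketch. You describe it as ``a routine cofinality argument between ind-objects,'' but the two filtrations in question are \emph{not} cofinal in any ordinary sense: the BNSR-side characterization used in the paper (via \cite[Proposition~4.7]{Hartmann2024c}) filters $|\E G|$ by the shifted family $\bigl(t^k(\Delta^q_k\cap G_\chi^{q+1})_q\bigr)_k$ for a generator $t$ with $\chi(t)<0$, while the metric-space side is the unshifted $\bigl((\Delta^q_k\cap G_\chi^{q+1})_q\bigr)_k$, i.e.\ $(\VR{k}{G_\chi})_k$. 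These are filtrations by genuinely different subcomplexes of $\E G$ --- one drifts out to arbitrarily negative $\chi$-values, the other stays in $G_\chi$ --- so neither is eventually contained in the other, and one cannot conclude by cofinality.

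What the paper's Theorem~\ref{thm:homotopic_metric} actually does is construct an explicit comparison: from a sphere mapped into the shifted complex, it builds a null-homotopy in the unshifted complex by post-composing with the simplicial retraction $\alpha$ that fixes $g\in G_\chi$ and sends $g\notin G_\chi$ to $gt^{-l}$; conversely it translates the whole homotopy by $t^k$. This translation argument is the content that your sketch leaves unproved, and it is precisely the place where the submonoid-versus-subgroup tension you mention gets resolved. (A small citation slip as well: you point to \cite{Hartmann2024b} for the homotopical case, but that reference handles the homological $\Sigma$-bullet; the homotopical one draws on \cite{Hartmann2024c} and is proved in-paper.) To complete your proposal you would need to supply the double-filtration criterion and the $t^k$-translation equivalence, not a cofinality check.
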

We will use Theorem~\ref{thm:metric} as our working definition for finiteness properties of groups and $\Sigma$-invariants.

It is well known that type $\Ftp m$ and type $\Flg m$ are quasi-isometric invariants of a group \cite{Alonso1994}. Indeed they are even coarse invariants: A mapping $\varphi:X\to Y$ is said to be coarsely Lipschitz if for every $R\ge 0$ there exists $S\ge 0$ such that $d(x_1,x_2)\le R$ in $X$ implies $d(\varphi(x_1),\varphi(x_2))\le S$ in $Y$. Two coarsely Lipschitz mappings $\varphi,\psi:X\to Y$ define the same morphism in the coarse category (we say $\varphi,\psi$ are close) if there exists $H\ge 0$ such that $d(\varphi(x),\psi(x))\le H$ for every $x\in X$. Then it is easy to check that coarsely Lipschitz maps determine a morphism of ind-objects ${\VR R X}_R\to {\VR S Y}_S$ and if two mappings are close then they induce homotopic maps between ind-objects. Thus the homotopy type of ${\VR k X}_k$ is a coarse invariant. This in particular implies that the ind-objects $\tilde \hlgy_q(\VR k X)_k$ and $\pi_1(\VR k X)_k$ are coarse invariants. Thus it can be said that type $\Ftp m$ and type $\Flg m$ are coarse properties. In particular type $\Ftp 1$ is equivalent to being coarsely connected and type $\Ftp 2$ is equivalent to being coarsely simply connected.

\section{Basic properties}
The geometric description obtained in Theorem~\ref{thm:metric} will 
ultimatively help us in finding the criteria described in 
Section~\ref{sec:crit1} and Section~\ref{sec:crit2}. It is also useful in 
finding short proofs for many well-known results as described in this section.
 
In the course of this paper we often translate vertices of a simplex or chain 
with an element $t\in G$ of positive $\chi$-value. Since the metric is 
left-invariant translating from the right or from the left is both useful but 
does fundamentally different things to the metric. We explain:
 \begin{enumerate}
  \item The translation $g\mapsto tg$ \emph{from the left} on vertices of a 
simplex/chain does not change the distance provided that the translation is 
simultaneous on each vertex:
  \[
   d(tg,th)=l((tg)^{-1}(th))=l(g^{-1}h)=d(g,h)
  \]
 does not change the distance. This way the Vietoris-Rips complex $\VR k G$ is 
a subset of $\E G$ that is $G$-invariant under left translation. If on the 
other hand $t_1,t_2\in G$ are two distinct elements not part of the center then
 \[
  d(t_1g,t_2g)=l(g^{-1}t_1^{-1}t_2g)
 \]
 does not have a bound depending on $t_1,t_2$.
 \item We can translate \emph{from the right} $g\mapsto gt$ on vertices of a 
simplex/chain. This changes the distances $k$ between vertices. 
Thus the chain induced by this action lives in a different $\Delta^q_k$ for some 
$k$. But this change is by a controlled amount. Namely
 \begin{align*}
  d(gt,ht)
  &\le d(gt,g)+d(g,h)+d(h,ht)\\
  &=l((gt)^{-1}g)+d(g,h)+l((ht)^{-1}h)\\
  &=d(g,h)+2l(t)
 \end{align*}
If $t_1,t_2\in G$ are two distinct elements then the distance
\[
  d(gt_1,ht_2)=l((gt_1)^{-1}ht_2)=d(g,h)+l(t_1)+l(t_2).
 \]
  also changes by a controlled amount.
 \end{enumerate}

 \begin{lem}
 \label{lem:GL}
  If $L\in \RR$ and $\chi\not=0$ then the coarse type of
  \[
   G_L:=\{g\in G\mid \chi(g)\ge L\}
  \]
 does not depend on $L$.
 \end{lem}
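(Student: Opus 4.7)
The plan is to show that for any two real numbers $L\le L'$ the sets $G_L$ and $G_{L'}$ (with the subspace metric) are coarsely equivalent; by symmetry of the relation this proves the lemma. One direction is free: the inclusion $\iota\colon G_{L'}\hookrightarrow G_L$ is an isometric embedding, hence coarsely Lipschitz.

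For the reverse direction, since $\chi\not=0$ I pick an element $t\in G$ with $\chi(t)>0$ (replace $t$ with $t^{-1}$ if necessary), and choose $n\in\NN$ large enough so that $n\chi(t)\ge L'-L$. I then define
\[
 \varphi\colon G_L\to G_{L'},\qquad g\mapsto gt^n.
\]
This lands in $G_{L'}$ because $\chi(gt^n)=\chi(g)+n\chi(t)\ge L+(L'-L)=L'$. The map $\varphi$ is coarsely Lipschitz by the right-translation estimate recorded just before the lemma: for every $g,h\in G_L$,
\[
 d(gt^n,ht^n)\le d(g,h)+2\,l(t^n)\le d(g,h)+2n\,l(t).
\]

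To finish I verify that $\varphi$ and $\iota$ are coarse inverses. For every $g\in G_L$,
\[
 d\bigl(g,\iota(\varphi(g))\bigr)=d(g,gt^n)=l(t^n)\le n\,l(t),
\]
so $\iota\circ\varphi$ is close to $\id_{G_L}$; the composition $\varphi\circ\iota$ is handled identically, being close to $\id_{G_{L'}}$ with the same uniform constant $n\,l(t)$. Hence $G_L$ and $G_{L'}$ are coarsely equivalent.

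There is no serious obstacle here; the only mildly subtle point is that one must translate \emph{from the right} (so that $\chi$-values shift by the controlled amount $n\chi(t)$) rather than from the left, and then one has to invoke the controlled-distortion estimate for right translation. Once the correct $t$ and $n$ are chosen, the coarse equivalence is immediate from the two simple inequalities above.
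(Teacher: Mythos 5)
Your proof is correct and follows essentially the same route as the paper: right-translate by $t^n$ for $t$ with $\chi(t)>0$ and $n\chi(t)\ge L'-L$, use the right-translation distance estimate to see the map is coarsely Lipschitz, and observe it is a coarse inverse of the inclusion since $d(g,gt^n)\le n\,l(t)$.
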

 \begin{proof}
  Suppose $K\ge L$. Choose $t\in G$ with $\chi(t)>0$. Then $n\chi(t)\ge K-L$ for some $n\in \NN$. Then 
  \begin{align*}
   \varphi:G_L&\to G_K\\
   g&\mapsto gt^n
  \end{align*}
 is well-defined, for if $\chi(g)\ge l$ then 
 \[
 \chi(gt^n)=\chi(g)+n\chi(t)\ge K-L+L=K.
 \]
 This mapping $\varphi$ is the coarse inverse of the inclusion $G_K\subseteq G_L$. Namely $\varphi$ is coarsely Lipschitz since $d(g,h)\le k$ implies $d(gt^n,ht^n)\le k+2nl(t)$ and the map $g\mapsto gt^n$ on $G_K$ is close to the identity on $G_K$ since $d(g,gt^n)\le nl(t)$. The same can be said about the map $g\mapsto gt^n$ on $G_L$ and the identity on $G_L$.
 \end{proof}

 If $T\subseteq G$ is a subset then $S(G,T)$ denotes the set of characters on $G$ which vanish on $T$.
 \begin{lem}\cite[Lemma~5.2]{Bieri1988}
  If $G$ is a group of type $\Flg m$ with center $Z$ then
  \[
   S(G,Z)^c\subseteq \Sigma^m(G;\ZZ).
  \]
 \end{lem}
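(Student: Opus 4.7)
The plan is to exhibit the chain endomorphism required by the criterion of Theorem~\ref{thma:varphilg} using the distinguished central element. Since $\chi \not\in S(G,Z)$, there is some $z \in Z$ with $\chi(z) \neq 0$; after replacing $z$ by $z^{-1}$ if necessary, we may assume $\chi(z) > 0$. The idea is that right-multiplication by a central element is the rare operation that both preserves the Vietoris--Rips filtration (as pointed out in case~1 of the discussion on left vs.\ right translation, but only when $z$ lies in the center, so that $d(gz,hz) = l(z^{-1}g^{-1}hz) = d(g,h)$) \emph{and} raises the $\chi$-value by a strictly positive constant.

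Concretely, for each $k \ge 0$ and $q \ge 0$, I would define
\[
\varphi_q \colon \chains q k G \to \chains q k G, \qquad (g_0, \ldots, g_q) \mapsto (g_0 z, \ldots, g_q z),
\]
and extend $\ZZ$-linearly. The routine verifications are: well-definedness on $\Delta^q_k$ follows from centrality; $G$-equivariance is immediate because $z$ commutes past any left translation; commutation with the face maps is formal; and the composition of $\varphi_0$ with the augmentation $\ZZ[G] \to \ZZ$ equals the augmentation itself, so $\varphi_*$ extends the identity on $\ZZ$. Finally, on any simplex $\sigma = (g_0,\ldots,g_q)$,
\[
v(\varphi_q(\sigma)) - v(\sigma) = \min_i \chi(g_i z) - \min_i \chi(g_i) = \chi(z) > 0,
\]
and this estimate passes to arbitrary chains.

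With $\varphi_q$ in hand for every $k$, Theorem~\ref{thma:varphilg} applied to the group $G$ (which is of type $\Flg m$ by hypothesis) yields $\chi \in \Sigma^m(G;\ZZ)$, completing the proof. There is no real obstacle here; the only pitfall is subtle and already addressed, namely that right-multiplication in general distorts distances by $2l(t)$, so the construction genuinely requires $z \in Z$ rather than an arbitrary element of $\chi$-positive $\chi$-value.
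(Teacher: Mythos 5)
Your proof is correct, but it follows a genuinely different route from the paper's. The paper argues directly at the level of cycles: given a cycle $z\in \chains{m-1}{k}{G_\chi}$, it produces a bounding chain $c\in\chains{m}{l}{G}$ using type $\Flg m$, then pushes $c$ into the half-space $G_\chi$ by combining left-multiplication by the central element $t$ with a chain homotopy $h$ joining $t$ to the identity (controlled via Bieri--Renz's Lemma~2.1), finally replacing interior vertices $g_i$ by $g_i t^n$; it also treats $m=1$ separately. You instead observe that multiplication by a central $z$ with $\chi(z)>0$ (right and left multiplication coincide here, precisely because $z$ is central) is simultaneously a $\ZZ G$-chain map, preserves the Vietoris--Rips filtration, and raises $v$ by exactly $\chi(z)$ on every $\chains{q}{k}{G}$, then invoke Theorem~\ref{thma:varphilg}. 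This is not circular --- the proof of Theorem~\ref{thm:sigmam-crit} does not use the present lemma --- and your argument is shorter and uniform in $m$. The tradeoff is that it relies on the machinery of Section~\ref{sec:crit1}, which in the paper's exposition appears after this lemma, whereas the paper's proof is self-contained within Section~3. Both arguments hinge on the same key observation: centrality is exactly what makes translation by a $\chi$-positive element both $\ZZ G$-linear and filtration-preserving at once.
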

 \begin{proof}
  The $m=1$ case was covered in \cite[Proposition~A2.4]{Strebel2012}.
  
  We prove the case $m\ge 2$. Let $\chi\in S(G,Z)^c$ be a character and $k\ge 
0$ a number. Since $G$ is of type $\Flg m$ there exists some $l\ge 0$ so that 
$\homology{m-1} k G$ vanishes in $\homology {m-1} l G$. Since $\chi$ does not 
vanish on $Z$ there exists some $t\in Z$ with $\chi(t)>0$. Let $z\in \chains 
{m-1} k {G_\chi}$ be a cycle. Then it lives in $\chains {m-1} k G$ thus there 
exists some $c\in \chains m l G$ with $\diff m c=z$. 
  The map 
 \begin{align*}
  t:\ZZ[\Delta^q_l]=\chains q l G&\to \ZZ[\Delta^q_l]=\chains q l G\\
  d&\mapsto td
 \end{align*}
 is a chain endomorphism of $\ZZ G$-complexes (here we use that $t$ is in the 
center) extending the identity on $\ZZ$. Then there is a chain homotopy $h$ of 
$\ZZ G$-complexes joining $t$ and $\id_{\ZZ[\Delta^q_l]}$ since they both extend 
the identity on $\ZZ$. Now we replace $c$ by 
\[
c':=c+\diff {m+1}h_mc=tc-h_{m-1}\diff mc=tc-h_{m-1}z.
\]
Then $\diff mc'=\diff m(c+\diff {m+1}h_mc)=z$. And $v(c')=v(tc-h_{m-1}z)\ge 
\min(\chi(t)+v(c),v(h_{m-1}z))$. Now $v(h_{m-1}(z))\ge v(h_{m-1}(z))-v(z)\ge K$ 
is bounded from below by \cite[Lemma~2.1]{Bieri1988}. If $\chi(t)+v(c)$ is still 
below $K$ then we can repeat the procedure to obtain a chain $c$ with boundary 
$z$ and $v$-value at least $K$. Wlog $t$ is an element of the generators. Then 
$n\chi(t)\ge -K$ for some $n$. We obtain a chain $\tilde c$ from $c$ by 
replacing every vertex $g_i$ of $c$ which is not part of the boundary by 
$g_it^n$. Then $\tilde c\in \chains m {l+2n} {G_\chi}$ and $\diff m\tilde c=z$. 
Thus $\homology {m-1} k {G_\chi}$ vanishes in $\homology{m-1} {l+2n}{G_\chi}$ 
which proves that $\chi\in \Sigma^m(G;\ZZ)$.
 \end{proof}
 
We recall a metric space $X$ is said to be of type $\Ftp 1$ if it is coarsely connected. It is said to be of type $\Ftp m,m\ge 2$ if it is coarsely connected and $\pi_q(\VR r X)_r$ is essentially trivial for $q=1,\ldots,m-1$. 
 
 \begin{thm}
 \label{thm:homotopic_metric}
  If $\chi:G\to \RR$ is a character then $\chi\in\Sigma^m(G)$ (the homotopical version) if and only if $G_\chi$ is of type $\Ftp m$. 
 \end{thm}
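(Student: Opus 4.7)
The plan is to mediate between the classical definition of $\Sigma^m(G)$ and the metric definition of $G_\chi$ being of type $\Ftp m$ via the contractible $G$-space $\E G$ and its Vietoris--Rips filtration $(\VR k G)_k$. The key observation is that the valuation $v$ extending $\chi$ identifies the superlevel set $\{\sigma\in \VR k G\st v(\sigma)\ge 0\}$ with $\VR k {G_\chi}$ on the nose, so both sides of the desired equivalence will be compared against the essential triviality of the ind-system $\pi_q(\VR k {G_\chi})_k$ for $q=0,\ldots,m-1$.

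First I would recall that $\chi\in \Sigma^m(G)$ classically requires $G$ to be of type $\Ftp m$ and, for a $K(G,1)$ model $Y$ with finite $m$-skeleton mod $G$, that the superlevel set $\tilde Y_{\ge 0}$ of a $G$-equivariant height function on the universal cover extending $\chi$ is essentially $(m-1)$-connected. Replacing $Y$ by $\E G$, the valuation $v$ plays the role of this height function, the filtration $(\VR k G)_k$ replaces the cellular skeleta, and the identification above turns the classical condition into the statement that $\pi_q(\VR k{G_\chi})_k$ is essentially trivial for $q\le m-1$; this is precisely Definition~\ref{dfn:finiteprops-metric} for the metric space $G_\chi$. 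The threshold value $0$ is immaterial by Lemma~\ref{lem:GL}.

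Second, I would use the coarse invariance of the ind-homotopy type of $(\VR k X)_k$ (noted immediately after Theorem~\ref{thm:metric}) to transfer between a CW model $Y$ and $\E G$: the $0$-skeleton of $\tilde Y$ can be $G$-equivariantly identified with $G$, and attaching maps of finitely many cell orbits have bounded diameter in the word metric. Hence the inclusion $G \into \tilde Y$ is a $G$-equivariant coarse equivalence that intertwines the height functions up to a bounded additive error, which by Lemma~\ref{lem:GL} does not affect the coarse type of the superlevel sets.

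The main obstacle is to make this superlevel-set comparison rigorous at the level of ind-objects: a coarse equivalence between ambient spaces does not automatically induce one between superlevel sets, because one needs the comparison maps and the close-homotopies relating them to the identity to have bounded $\chi$-displacement. Since the comparison maps are $G$-equivariant and $\chi$ is a homomorphism, translates of a finite set of cells have the same height up to a uniform constant, and this constant can be absorbed into a shift of the threshold via Lemma~\ref{lem:GL}. Once this is in place, the essential triviality of $\pi_q(\VR k {G_\chi})_k$ for $q\le m-1$ is equivalent, through the coarse equivalence of superlevel sets, to the classical essential $(m-1)$-connectivity of $\tilde Y_{\ge 0}$, yielding both directions of the theorem.
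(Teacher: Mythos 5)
The central gap is that you conflate two genuinely different ind-systems. The classical condition defining $\chi\in\Sigma^m(G)$ is essential $(m-1)$-connectivity of the \emph{superlevel/translate} filtration $(\tilde Y_{\ge -s})_s$ (equivalently $(t^s\tilde Y_{\ge 0})_s$ for $\chi(t)<0$), where the ind-variable is the $\chi$-threshold and the underlying complex stays the same. The condition defining ``$G_\chi$ of type $\Ftp m$'' is essential $(m-1)$-connectivity of the \emph{Vietoris--Rips} filtration $(\VR k{G_\chi})_k$, where the $\chi$-threshold is fixed at $0$ and the ind-variable is the radius $k$. Your identification of the superlevel set $\{\sigma\in \VR k G : v(\sigma)\ge 0\}$ with $\VR k {G_\chi}$ is correct but only matches up a single layer; it says nothing about how the two filtrations relate as ind-objects. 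Your coarse-equivalence argument (between $\tilde Y_{\ge 0}$ and $G_\chi$, with the bounded $\chi$-displacement fix via Lemma~\ref{lem:GL}) would show that $\tilde Y_{\ge 0}$ and $G_\chi$ have the same metric type $\Ftp m$, but it does not show that ``$\tilde Y_{\ge 0}$ of metric type $\Ftp m$'' is the same as ``the translate filtration of $\tilde Y_{\ge 0}$ is essentially $(m-1)$-connected.'' That equivalence is precisely the nontrivial content the paper has to prove, and your sketch asserts it rather than proving it.

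The paper handles this by first citing a result that recasts $\chi\in\Sigma^m(G)$ in terms of a \emph{double} filtration $\bigl(t^k(\Delta^q_k\cap G_\chi^{q+1})_q\bigr)_k$ inside $\E G$, in which the radius $k$ and the translate $t^k$ vary in tandem, and then explicitly shows (by constructing retractions like $g\mapsto g$ for $g\in G_\chi$, $g\mapsto gt^{-l}$ otherwise, and conversely by translating null-homotopies back by $t^k$) that this double filtration is essentially $(m-1)$-connected if and only if the untranslated filtration $(\VR k{G_\chi})_k$ is. To repair your argument you would need an analogue of this step: an explicit interleaving between the translate and radius filtrations. Without it the proof does not go through.
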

 \begin{proof}
  We apply \cite[Proposition~4.7]{Hartmann2024c}: The complex $Y=|\E G|$ is a 
free contractible $G$-CW complex. Then $Y_r=((\Delta^q_r)_q)_r$ is a filtration 
of $Y$ by $G$-invariant cocompact subcomplexes. The map 
$v:(g_0,\dots,g_q)\mapsto \min_i(\chi(g_i))$ defines a valuation associated to 
$\chi$ on $Y_r$. Choose a generator $t\in G$ with $\chi(t)<0$. Then the double 
filtration $(t^k(\Delta_k^q\cap G_\chi^{q+1})_q)_k$ is essentially 
$(m-1)$-connected if and only if $\chi\in \Sigma^m(G)$. It remains to show 
$\pi_{m-1}(t^k(\Delta_k^q\cap G_\chi^{q+1})_q)_k$ is essentially trivial if and 
only if $\pi_{m-1}((\Delta_k^q\cap G_\chi^{q+1})_q)_k$ is essentially trivial. 
  
  Suppose $\pi_{m-1}(t^k(\Delta_k^q\cap G_\chi^{q+1})_q)_k$ is essentially trivial. If $k\ge 0$ then there exists some $l\ge 0$ with $\pi_{m-1}(t^k(\Delta^q_k\cap G_\chi^{q+1})_q)$ vanishes in $\pi_{m-1}(t^l(\Delta^q_l\cap G_\chi^{q+1})_q)$. Let $\varphi:S\to |\Delta_k^q\cap G_\chi^{q+1}|$ represent an element of $\pi_{m-1}((\Delta_k^q\cap G_\chi^{q+1})_q)$. Then $\varphi$ lives in $|t^k(\Delta_k^q\cap G_\chi^{q+1})_q|$. Thus there exists a null-homotopy $H:S\times [0,1]\to |t^l(\Delta_l^q\cap G_\chi^{q+1})_q|$ of $\varphi$. Define a simplicial map $\alpha:t^l(\Delta_l^q\cap G_\chi^{q+1})_q\to (\Delta^q_{3l}\cap G_\chi^{q+1})_q$ by assigning $g\in G_\chi$ to $g$ and $g\not\in G_\chi$ to $gt^{-l}$. Then $|\alpha|\circ H$ is a homotopy joining $\varphi$ to a constant map. Thus $\pi_{m-1}((\Delta_k^q\cap G_\chi^{q+1})_q)_k$ vanishes in $\pi_{m-1}((\Delta_{3l}^q\cap G_\chi^{q+1})_q)_k$. This way $\pi_{m-1}((\Delta_k^q\cap G_\chi^{q+1})_q)_k$ is essentially trivial.
  
  Conversely suppose $\pi_{m-1}((\Delta_k^q\cap G_\chi^{q+1})_q)_k$ is essentially trivial. If $k\ge 0$ then there exists some $l\ge k$ with $\pi_{m-1}((\Delta_k^q\cap G_\chi^{q+1})_q)$ vanishes in $\pi_{m-1}((\Delta_l^q\cap G_\chi^{q+1})_q)$. Let $\varphi:S\to |t^k(\Delta_k^q\cap G_\chi^{q+1})_q|$ represent an element of $\pi_{m-1}(t^k(\Delta_k^q\cap G_\chi^{q+1})_q)$. Then $t^{-k}\varphi$ lives in $|\Delta_k^q\cap G_\chi^{q+1}|$. Thus there exists $H:S\times [0,1]\to |\Delta^q_l\cap G_\chi^{q+1}|$ joining $\varphi$ to a constant map. Then $t^kH$ lives in $|t^l(\Delta_l^q\cap G_\chi^{q+1})_q|$ and homotopes $\varphi$ to a constant map. Thus $\pi_{m-1}(t^k(\Delta_k^q\cap G_\chi^{q+1})_q)$ vanishes in $\pi_{m-1}(t^l(\Delta_l^q\cap G_\chi^{q+1})_q)$ This way we showed $\pi_{m-1}(t^k(\Delta_k^q\cap G_\chi^{q+1})_q)_k$ is essentially trivial.
 \end{proof}
 
  \begin{lem}
  If $m\ge 2$ then
   \[
    \Sigma^m(G)=\Sigma^2(G)\cap \Sigma^m(G,\ZZ).
   \]
  \end{lem}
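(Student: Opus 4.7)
My plan is to reduce the statement to a purely metric claim via Theorem~\ref{thm:metric}, then run an induction on dimension using a Hurewicz-style argument at each step.

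First, for the inclusion $\Sigma^m(G)\subseteq \Sigma^2(G)\cap \Sigma^m(G;\ZZ)$: by Theorem~\ref{thm:metric} it suffices to note that a metric space of type $\Ftp m$ is automatically of type $\Ftp 2$ (as $m\ge 2$) and of type $\Flg m$. The latter follows from naturality of the Hurewicz maps $\pi_q(\VR k X)\to \tilde\hlgy_q(\VR k X)$, which carry the essentially trivial ind-object $(\pi_q(\VR k X))_k$ to the essentially trivial ind-object $(\tilde\hlgy_q(\VR k X))_k$.

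For the reverse inclusion, by Theorem~\ref{thm:metric} the task reduces to showing that any metric space $X$ of type $\Ftp 2$ and $\Flg m$ is of type $\Ftp m$. I will induct on $q\in\{1,\ldots,m-1\}$; the cases $q=0,1$ are handled by $\Ftp 2$. For the inductive step $q\ge 2$, assume that $(\pi_i(\VR k X))_k$ is essentially trivial for each $i<q$. Fix $k\ge 0$ and a simplicial representative $\alpha\colon S^q\to\VR k X$. By $\Flg m$, choose $k_0\ge k$ and a chain $c\in\chains{q+1}{k_0}{X}$ with $\partial c$ equal to the simplicial cycle induced by $\alpha$. I then realize $c$ combinatorially as a CW pair $(W,\partial W=S^q)$ of dimension $q+1$ --- with $(q+1)$-cells indexed by the basis summands of $c$ glued along $q$-faces as forced by $\partial c=\alpha_*$ --- together with a cellular map $f\colon W\to \VR{k_0}X$ whose induced fundamental chain is $c$ and whose restriction to $\partial W$ is $\alpha$.

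The obstacle is that $W$ is generally not a disk. To upgrade it, I iteratively kill $\pi_i$ for $i=1,\ldots,q-1$. Set $l_0:=k_0$ and $W_0:=W$. At stage $i$, the inductive hypothesis provides $l_i\ge l_{i-1}$ such that $\pi_i(\VR{l_{i-1}}X)\to\pi_i(\VR{l_i}X)$ is the zero map, so for every generator of $\pi_i(W_{i-1})$ its image under the current extension $f_{i-1}\colon W_{i-1}\to\VR{l_{i-1}}X$ becomes null-homotopic in $\VR{l_i}X$. I attach an $(i+1)$-cell along each such generator and extend $f_{i-1}$ over it via a chosen null-homotopy in $\VR{l_i}X$, obtaining $f_i\colon W_i\to\VR{l_i}X$. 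After $q-1$ stages the space $\tilde W:=W_{q-1}$ is $(q-1)$-connected, with extension $\tilde f\colon\tilde W\to\VR l X$ for $l:=l_{q-1}$. By Hurewicz, $\pi_q(\tilde W)\cong\hlgy_q(\tilde W)$; the fundamental chain of $W\subseteq\tilde W$ has boundary the cycle of $\partial W=S^q$, so this class vanishes in $\hlgy_q(\tilde W)$ and hence in $\pi_q(\tilde W)$. Thus $S^q\hookrightarrow\tilde W$ is null-homotopic in $\tilde W$, and composing with $\tilde f$ produces the desired null-homotopy of $\alpha$ in $\VR l X$.

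The main obstacle I anticipate is the scale bookkeeping across the iterated cell attachments: the collections of cells used to kill $\pi_i(W_{i-1})$ may be infinite, and one must ensure a single scale $l_i$ suffices to fill all of them uniformly. This uses exactly the uniformity built into essential triviality (a single $l_i$ works for every class in $\pi_i(\VR{l_{i-1}}X)$), so the iteration closes after $q-1$ steps with a bound $l=l_{q-1}$ depending only on $k$. A secondary technical point is the faithful combinatorial realization of the chain $c$ as a CW pair whose boundary matches the chosen triangulation of the source of $\alpha$, which is routine but requires care with orientations and face identifications.
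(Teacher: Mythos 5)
Your forward inclusion already has a gap: naturality of the Hurewicz map is not enough. You have a commuting square in which the vertical map on $\pi_q$ is zero, but since the Hurewicz map $\pi_q\to \tilde\hlgy_q$ need not be surjective when the spaces $\VR k X$ are not $(q-1)$-connected, you cannot conclude that the vertical map on $\tilde\hlgy_q$ is zero. The paper first replaces $(\VR k{G_\chi})_k$ by an isomorphic ind-object $(B_k)_k$ consisting of $(m-1)$-connected spaces using Abels' model-replacement lemma~\cite[Lemma~1.1.3]{Abels1997}, and only then applies Hurewicz to get $\tilde\hlgy_q(B_k)=0$ outright. Your reverse inclusion starts from the same sound idea (iterate Hurewicz through dimensions) but has a more serious gap in the vehicle you chose for it.

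The step ``realize $c$ combinatorially as a CW pair $(W,\partial W=S^q)$ \ldots together with a cellular map $f$ whose induced fundamental chain is $c$'' is not routine and is not available in the generality you need. A $(q+1)$-chain is a formal $\ZZ$-linear combination of simplices: the cancellations in $\partial c$ do not determine a gluing of the constituent cells (one must choose how to pair cancelling faces, and the resulting space depends on the choice), and even after making a choice the remaining free $q$-faces merely realize \emph{some} cycle representing $\alpha_*$ --- there is no reason this boundary should be a single $q$-sphere, carry the triangulation of the domain of $\alpha$, or admit $\alpha$ as a restriction of $f$. You would additionally need $W$ to carry a relative fundamental class in $\hlgy_{q+1}(W,\partial W)$ that $f_*$ sends to $c$, which is yet another non-trivial requirement. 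The paper sidesteps chain realization entirely: it proves the purely ind-categorical statement that if $\tilde\hlgy_q(A_k)_k$ is essentially trivial for $q\le m-1$ and $\pi_0(A_k)_k,\pi_1(A_k)_k$ are essentially trivial, then $\pi_q(A_k)_k$ is essentially trivial for $q\le m-1$, by inductively invoking \cite[Lemma~1.1.3]{Abels1997} to replace $(A_k)_k$ by a cofinal filtration of $(q-1)$-connected spaces $(B^{q-1}_k)_k$, where the Hurewicz homomorphism is an isomorphism on each $B^{q-1}_k$ and its inverse transports the vanishing from homology to homotopy. You should abandon the explicit chain-to-CW-pair construction and instead use such a model-replacement lemma for ind-spaces.
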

 \begin{proof}
 We first prove $ \Sigma^m(G)\subseteq \Sigma^m(G,\ZZ)$. If $\chi\in \Sigma^m(G)$ then $\pi_q(\VR k{G_\chi})_k$ is essentially trivial for $q=0,\ldots,m-1$. We use \cite[Lemma~1.1.3]{Abels1997} which states there exists a sequence $(B_k)_k$ of $m-1$-connected spaces such that $(\VR k{G_\chi})_k$ and $(B_k)_k$ are isomorphic as ind-spaces. Then $\tilde \hlgy_q(B_k)=0$ for every $k,q=0,\ldots,m-1$ by the Hurewicz theorem. Thus $\tilde \hlgy_q(\VR k{G_\chi})$ is essentially trivial. This proves $\chi\in \Sigma^m(G,\ZZ)$.
 
Now we prove $\Sigma^2(G)\cap \Sigma^m(G,\ZZ)\subseteq \Sigma^m(G)$. For that 
it is sufficient to prove that for any ind-space $(A_k)_k$ with both 
$\tilde\hlgy_q(A_k)_k$ essentially trivial for $q=0,\ldots,m-1$ and 
$\pi_0(A_k)_k$ and $\pi_1(A_k)_k$ essentially trivial already $\pi_q(A_k)_k$ 
essentially trivial for $q=0,\ldots,m-1$. We do that by induction on $q$ 
starting with $q=1$. Since $\pi_0(A_k)_k,\pi_1(A_k)_k$ are essentially trivial 
\cite[Lemma~1.1.3]{Abels1997} provides us with a sequence $(B^1_k)_k$ of 
$1$-connected spaces that is isomorphic to $(A_k)_k$ as ind-spaces. Now we 
consider $m-1\ge q\ge 2$. We can assume that there exists a sequence 
$(B^{q-1}_k)_k$ of $q-1$-connected spaces which is isomorphic to $(A_k)_k$ as 
ind-spaces. Then by the Hurewicz theorem the Hurewicz homomorphism 
$h_q:\pi_q(B_k^{q-1})\to \hlgy_q(B_k^{q-1})$ is an isomorphism for every $k$. 
Since $\hlgy_q(B^{q-1}_k)_k$ is essentially trivial there is for every $k$ some 
$l$ such that the left vertical map in 
\[
\xymatrix{
 \hlgy_q(B^{q-1}_k;\ZZ)\ar[r]^{h_q^{-1}}\ar[d]_0
 & \pi_q(B^{q-1}_k)\ar[d]\\
 \hlgy_q(B^{q-1}_l;\ZZ)\ar[r]^{h_q^{-1}}
 & \pi_q(B^{q-1}_l)
 }
\]
is zero. Then since $h_q^{-1}$ is surjective the right vertical map must be $0$. Thus $(B^{q-1}_k)_k$ is essentially $q$-connected. By \cite[Lemma~1.1.3]{Abels1997} there is a sequence $(B^q_k)_k$ of $q$-connected spaces that is isomorphic to $(B^{q-1}_k)_k$ as ind-spaces. Since $(B^{q-1}_k)_k$ is isomorphic to $(A_k)_k$ this proves the induction hypothesis for the next step.
 \end{proof}

 \begin{lem}
  A finitely generated group $G$ is of type $\Flg 2$ if there is some $l\ge0$ such that $\homology {1} 1 G$ vanishes in $\homology {1} l G$.
 \end{lem}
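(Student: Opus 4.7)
The plan is to verify the two conditions for type $\Flg 2$ from Definition~\ref{dfn:finiteprops-metric}, namely essential triviality of $\tilde\hlgy_q(\VR k G)_k$ for $q=0,1$. The $q=0$ case is immediate: a finitely generated group is $1$-coarsely connected in the word metric, so $\VR k G$ is connected for every $k\ge 1$ and $\tilde\hlgy_0(\VR k G)=0$ already at the level of single terms. The substance of the lemma is the $q=1$ case. Given the hypothesis for $k=1$, I will show that for every $k\ge 1$ the map $\hlgy_1(\VR k G)\to \hlgy_1(\VR{\max(k,l)} G)$ is zero; the natural $l'$ is $\max(k,l)$.

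The key device is \emph{geodesic subdivision} of edges. For an edge $(g,h)\in \Delta^1_k$ pick a geodesic $g=g_0,g_1,\ldots,g_n=h$ in $G$ with $n=d(g,h)\le k$. Since any two of the $g_i$'s are within distance $k$ of each other, the cone $2$-chain
\[
 c_{g,h}:=\sum_{i=0}^{n-1}(g,g_i,g_{i+1})
\]
lies in $\chain_2(\VR k G)$. A telescoping computation yields
\[
 \diff 2 c_{g,h}=\sum_{i=0}^{n-1}(g_i,g_{i+1})+(g,g)-(g,h).
\]
The degenerate simplex satisfies $(g,g)=\diff 3 (g,g,g)$, so $(g,h)$ is homologous in $\VR k G$ to the ``length-one replacement''
\[
 z'_{g,h}:=\sum_{i=0}^{n-1}(g_i,g_{i+1})\in \chain_1(\VR 1 G).
\]

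Applying this edge by edge to a given $1$-cycle $z\in Z_1(\VR k G)$ produces a cycle $z'\in \chain_1(\VR 1 G)$ with $z-z'\in \im\diff 2\subseteq \chain_1(\VR k G)$; note $\partial z'=\partial z=0$ automatically since the corrections boundaries telescope. The hypothesis now tells us $z'$ bounds in $\VR l G$, and therefore $z$ bounds in $\VR{\max(k,l)}G$, as required. The only subtlety is the appearance of the degenerate $1$-simplex $(g,g)$ in $\diff 2 c_{g,h}$; this is handled cleanly by the identity $(g,g)=\diff 3(g,g,g)$, and once that is absorbed the argument is pure bookkeeping, with the distance bounds on cone simplices being the only constraint one has to verify.
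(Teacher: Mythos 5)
Your argument is correct and takes a genuinely different, and in my view cleaner, route than the paper's. The paper proceeds indirectly: it first establishes that the hypothesis is stable under adding redundant generators to $X$ (with explicit control on the vanishing index), and then, for an arbitrary $k$-cycle $z$, regards $z$ as a $1$-cycle with respect to the enlarged generating set $X^{\le k}$ of words of length at most $k$, applies the transferred hypothesis there, and translates the resulting filling back to the $X$-metric. Your proof stays entirely in the original metric: you subdivide each edge of $z$ along a geodesic and cone from one endpoint, producing an explicit $2$-chain in $\chain_2(\VR k G)$ whose boundary exhibits $z$ as homologous to a cycle $z'\in \chain_1(\VR 1 G)$; the hypothesis then fills $z'$ in $\VR l G$, and the two fillings together bound $z$ in $\VR{\max(k,l)}G$. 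This avoids the generating-set bookkeeping entirely and makes the constant ($\max(k,l)$ rather than the paper's $kL$) transparent. Two small points to tidy up: the degenerate $1$-simplex $(g,g)$ is the image of the degenerate $2$-simplex $(g,g,g)$ under $\diff 2$, not $\diff 3$ (one easily checks $\diff 2(g,g,g)=(g,g)-(g,g)+(g,g)=(g,g)$); and for $q=0$, $\tilde\hlgy_0(\VR 0 G)$ is of course nonzero, but it vanishes in $\tilde\hlgy_0(\VR 1 G)=0$ since the $1$-skeleton of $\VR 1 G$ is the Cayley graph, so the ind-object is indeed essentially trivial.
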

\begin{proof}
 We first show the hypothesis of the Lemma is independent of the choice of generating set. Suppose $G$ is generated by $X$ and $Y=X\cup\{y\}$ is obtained from $X$ by adding a redundant generator.
 
 Suppose first that $\homology {1} 1 {G,d_X}$ vanishes in $\homology {1} l 
{G,d_X}$. We prove there exists some $L$ such that $\homology{1} 1 {G,d_Y}$ 
vanishes in $\homology {1} L {G,d_Y}$. Suppose $y=y_1\cdots y_n$ as a word in 
$X$. Then $d(y,y_1\cdots y_k)\le n$ for every $k=1,\ldots,n$. Let $z\in \chains 
1 1 {G,d_Y}$ be a cycle. Let $z'$ be obtained from $z$ by replacing each edge 
labeled $y$ by $y_1\cdots y_n$, namely $(g_0,g_1)$ with $g_0^{-1}g_1=y$ by 
$(g_0,g_0y_1)+(g_0y_1,g_0y_1y_2)+\cdots+(g_0y_1\cdots y_{n-1},g_1)$. And each 
edge labeled $y^{-1}$ is replaced by $y_n^{-1}\cdots y_1^{-1}$ in the same way. 
This is a cycle in $\chains 1 1 {G,d_X}$ thus there exists a chain $c$ in 
$\chains 2 l {G,d_X}$ with boundary $\diff 2 c=z'$. For each vertex in $c$ part 
of the boundary we undo the relabeling that we did when we replaced $z$ with 
$z'$. This way we obtain a chain $c'\in\chains 2 {l+2n} {G,d_Y}$ with boundary 
$\diff c'=z$. Thus $\homology 1 1 {G,d_Y}$ vanishes in $\homology 1 {l+2n} 
{G,d_Y}$. 
 
 Now suppose $G$ is assigned with the metric from $Y$ and $\homology 1 1 {G,d_Y}$ vanishes in $\homology 1 l {G,d_Y}$. Let $z\in \chains 1 1 {G,d_X}$ be a cycle. Then it also is a cycle in $\chains 1 1 {G,d_Y}$. Thus there exists a chain $c\in \chains 2 l {G,d_Y}$ with $\diff 2 c =z$. Now $c\in \chains 2 {nl} {G,d_X}$ also. This shows $\homology 1 1 {G,d_X}$ vanishes in $\homology 1 {nl} {G,d_X}$.
 
 Now we show the implication of the Lemma. Suppose $\homology 1 1 {G,d_X}$ vanishes in $\homology 1 l {G,d_X}$ and let $k\in \NN$ be a number. Every cycle in $\chains 1 k {(G,d_X)}$ is also a cycle in $\chains 1 1 {G,d_{X^{\le k}}}$ where $X^{\le k}$ describes the words in $X$ of length at most $k$. Since $X^{\le k}$ is obtained from $X$ by adding redundant generators there exists some $L\ge 0$ such that $\homology 1 1 {G,d_{X^{\le k}}}$ vanishes in $\homology 1 L {G,d_{X^{\le k}}}$. Then $\homology 1 k {G,d_X}$ vanishes in $\homology 1 {kL} {G,d_X}$. This shows $\homology 1 k G$ is essentially trivial. Thus $G$ is of type $\Flg 2$.
\end{proof}

 \begin{lem}
  If $0\le K-L\in \im \chi$ then $\homology q k {G_K}$ vanishes in $\homology q 
l {G_K}$ if and only if $\homology q k {G_L}$ vanishes in $\homology q l {G_L}$.
\end{lem}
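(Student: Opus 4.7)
The plan is to exhibit a canonical isomorphism between the filtered simplicial complexes $(\VR k {G_L})_k$ and $(\VR k {G_K})_k$ induced by left multiplication by a suitably chosen group element, from which the equivalence of the two vanishing statements follows immediately.

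First, since $K-L\ge 0$ lies in $\im\chi$, I choose some $t\in G$ with $\chi(t)=K-L$. Left multiplication $\lambda_t\colon G\to G$, $g\mapsto tg$, restricts to a map $G_L\to G_K$: indeed, if $g\in G_L$ then $\chi(tg)=\chi(t)+\chi(g)\ge (K-L)+L=K$, so $tg\in G_K$. The inverse $\lambda_{t^{-1}}$ analogously restricts to a map $G_K\to G_L$, so $\lambda_t\colon G_L\to G_K$ is a bijection. It is moreover an isometry, because the metric on $G$ is left-invariant: $d(tg,th)=l((tg)^{-1}th)=l(g^{-1}h)=d(g,h)$.

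Next, because $\lambda_t$ is an isometric bijection, it induces for every $k\ge 0$ a simplicial isomorphism $\VR k {G_L}\to \VR k {G_K}$, given on $q$-simplices by $(g_0,\ldots,g_q)\mapsto (tg_0,\ldots,tg_q)$. These isomorphisms commute with the subcomplex inclusions $\VR k \cdot\subseteq \VR l \cdot$, so they assemble to an isomorphism of the two filtrations. Passing to simplicial homology, we obtain isomorphisms $\homology q k {G_L}\cong \homology q k {G_K}$ for every $q,k$ that are compatible with the transition maps induced by $\VR k \cdot\subseteq \VR l \cdot$. Hence the transition map $\homology q k {G_L}\to \homology q l {G_L}$ is the zero map if and only if $\homology q k {G_K}\to \homology q l {G_K}$ is, which is exactly the desired equivalence.

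There is no real obstacle here; the only thing to keep track of is that the construction genuinely produces a bijection and not just an inclusion, which is precisely what the assumption $K-L\in\im\chi$ guarantees. Without that assumption one would only get an injection $G_L\hookrightarrow G_K$ from the inclusion (when $K\le L$) and not an isomorphism of filtrations.
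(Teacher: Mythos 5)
Your proof is correct and uses the same key idea as the paper: left translation by an element $t$ with $\chi(t)=K-L$, exploiting left-invariance of the metric. The paper carries out the argument directly at the level of chains and cycles (translating a cycle $z$ to $tz$, filling it, and translating the filling back by $t^{-1}$), whereas you package the same translation as an isomorphism of the filtered Vietoris--Rips complexes and then pass to homology; this is a cosmetic difference, not a different route.
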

\begin{proof}
  Suppose $t\in G$ with $\chi(t)=K-L$.
  
  First suppose $\homology q k {G_K}$ vanishes in $\homology q l {G_K}$. Let $z\in \chains q k {G_L}$ be a cycle. Then $v(z)\ge L$ and thus
  \[
   v(tz)=\chi(t)+v(z)\ge K-L+L=K.
  \]
  Thus $tz\in \chains q k {G_K}$. Then there exists some $c\in \chains {q+1} l {G_K}$ with $\diff {q+1}c=tz$. Since $v(c)\ge L$ then 
  \[
   v(t^{-1}c)=-\chi(t)+v(c)\ge L-K+K=L.
  \]
  Thus $t^{-1}c\in\chains {q+1} l {G_L}$ and $\diff {q+1}t^{-1}c=t^{-1}(tz)=z$. This way $\homology q k {G_L}$ vanishes in $\homology q l {G_L}$.
  
   Conversely suppose $\homology q k {G_L}$ vanishes in $\homology q l {G_L}$. Let $z\in \chains q k {G_K}$ be a cycle. Then $t^{-1}z\in \chains q k {G_L}$ by a same calculation as before. Then there exists some $c\in \chains {q+1} l {G_L}$ with $\diff {q+1}c=t^{-1}z$. Then $tc\in\chains {q+1} l {G_K}$ by a same calculation as before and $\diff {q+1}tc=t(t^{-1}z)=z$. This way $\homology q k {G_K}$ vanishes in $\homology q l {G_K}$.
 \end{proof}

If $a\in G$ we write $\Delta_a=\{(g,ga)|g\in G\}$. A mapping $\varphi:X\to Y$ 
is said to be \emph{coarsely injective} if for every $S\ge 0$ there exists $R\ge 
0$ such that for every $x,x'\in X$: $d(\varphi(x),\varphi(x'))\le S$ implies 
$d(x,x')\le R$. The mapping $\varphi$ is said to be \emph{coarsely surjective} 
if there exists $R\ge 0$ such that $\Delta_R[\im \varphi]=Y$.

\begin{prop}
 If $0\to N\to G\to Q\to 1$ is an exact sequence of countable groups with $N$ abelian then the sequence admits a coarsely Lipschitz transversal $t:Q\to G$ if and only if there exists a coarse equivalence $\varphi:N\rtimes Q\to G$ (which does not need to be a group homomorphism) that fits into the diagram
 \[
  \xymatrix{
  0\ar[r]
  &N\ar[r]\ar@{=}[d]
  &N\rtimes Q\ar[d]^\varphi\ar[r]
  &Q\ar[r]\ar@{=}[d]
  &1\\
  0\ar[r]
  &N\ar[r]
  &G\ar[r]
  &Q\ar[r]
  &1
  }
 \]
\end{prop}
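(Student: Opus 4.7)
The plan is to exploit the formula $\varphi(n, q) = n \cdot t(q)$, which matches transversals with set-theoretic splittings of the extension.

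For the forward direction, I would start with a coarsely Lipschitz transversal $t : Q \to G$. First I normalize so $t(1) = 1_G$ by replacing $t$ with $q \mapsto t(1)^{-1} t(q)$: this is still a transversal (because $t(1) \in N$) and still coarsely Lipschitz (because left multiplication by a fixed element is an isometry of $G$). Since $N$ is abelian, the conjugation formula $\sigma_q(n) := t(q) n t(q)^{-1}$ gives a $Q$-action on $N$, independent of the chosen lift, making $N \rtimes_\sigma Q$ the semidirect product inherited from the extension. I define $\varphi(n, q) := n \cdot t(q)$, which is a set bijection with inverse $g \mapsto (g \, t(\pi(g))^{-1}, \pi(g))$. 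Both squares commute strictly: $\pi(nt(q)) = \pi(t(q)) = q$ on the right, and $\varphi(n, 1) = n \cdot t(1) = n$ on the left.

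Checking that $\varphi$ is a coarse equivalence is the main content. After choosing word metrics on $N$, $Q$, $N \rtimes Q$, and $G$ compatibly -- so that the inclusions $N \hookrightarrow G$, $N \hookrightarrow N \rtimes Q$, and the projection $G \to Q$ are coarsely Lipschitz -- I would use the identity
\[
\varphi(n, q)^{-1} \varphi(n', q') \;=\; \sigma_{q^{-1}}(n^{-1}n') \cdot t(q)^{-1} t(q')
\]
to bound $d_G(\varphi(n, q), \varphi(n', q'))$ by $l_G(\sigma_{q^{-1}}(n^{-1}n')) + d_G(t(q), t(q'))$. The second summand is controlled by $d_Q(q, q')$ via coarse-Lipschitzness of $t$; the first is controlled by the length of $(n, q)^{-1}(n', q')$ in $N \rtimes Q$, since this conjugated element of $N$ is precisely its $N$-component. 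The reverse estimate, needed for coarse-Lipschitzness of $\varphi^{-1}$, uses the same decomposition together with coarse-Lipschitzness of $\pi$ to bound the $Q$-part.

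For the backward direction, given a coarse equivalence $\varphi$ fitting into the diagram, I would set $t(q) := \varphi(1, q)$. Commutativity of the right square gives $\pi(t(q)) = q$, so $t$ is a transversal. The map $s : Q \to N \rtimes Q$, $q \mapsto (1, q)$, is an isometric group homomorphism, and so $t = \varphi \circ s$ is coarsely Lipschitz as a composition of coarsely Lipschitz maps.

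The hardest part will be the coarse-equivalence verification in the forward direction, because the word metrics on $N$, $Q$, $N \rtimes Q$, and $G$ may arise from unrelated embeddings into finitely generated groups. The key is to select compatible generating sets at the outset so that the canonical inclusions and projections between the four groups are all coarsely Lipschitz; once this is in place, the estimates above become routine.
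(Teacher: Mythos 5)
Your approach matches the paper's: the map is $\varphi(n,q)=n\cdot t(q)$ (the paper presents it as the set-identity on $N\times Q$ carrying the two group laws $\cdot$ and $\ast$), the backward direction composes $\varphi$ with the canonical split $q\mapsto(1,q)$, and the forward direction reduces to estimating $\varphi(n,q)^{-1}\varphi(n',q')=\sigma_{q^{-1}}(n^{-1}n')\cdot t(q)^{-1}t(q')$.

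There is, however, a genuine gap in the forward direction. You assert that $l_G\bigl(\sigma_{q^{-1}}(n^{-1}n')\bigr)$ is ``controlled by the length of $(n,q)^{-1}(n',q')$ in $N\rtimes Q$, since this conjugated element of $N$ is precisely its $N$-component,'' and propose to secure this by choosing generating sets so that the inclusions $N\hookrightarrow G$, $N\hookrightarrow N\rtimes Q$ and the projections to $Q$ are coarsely Lipschitz. That is not enough. Coarse Lipschitzness of $N\hookrightarrow G$ bounds $l_G(m)$ in terms of $l_N(m)$, but $l_N(m)$ is \emph{not} bounded in terms of $l_{N\rtimes Q}(m,p)$: when the $Q$-action distorts $N$ (e.g.\ $N=\ZZ^2$, $Q=\ZZ$ acting by a hyperbolic matrix), the $N$-component of a bounded-length element of $N\rtimes Q$ has exponentially large $N$-length. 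The matching distortion of $N$ inside $G$ --- which is what ultimately saves the estimate --- is precisely part of what the proposition is asserting, so it cannot be taken for granted. To close the gap you must invoke coarse Lipschitzness of $t$ \emph{again}, for the first summand: writing the short $N\rtimes Q$-word for $(m,p)=(n,q)^{-1}(n',q')$ as $s_1\cdots s_R$, its $N$-component is a product of conjugates $\sigma_{w_i}(a_i)=t(w_i)a_i\,t(w_i)^{-1}$ with $l_Q(w_i)\le R$, and each $l_G(t(w_i))$ is bounded in terms of $R$ because $t$ is coarsely Lipschitz. The paper's proof avoids this bookkeeping with a direct entourage computation: the cocycle defect between $\cdot$ and $\ast$ over a pair in $\Delta_{(a,x)}$ is a fixed element $t(x)^{-1}$ times the transversal difference $t(x_2)^{-1}t(x_1)$ with $(x_2,x_1)\in\Delta_{x^{-1}}$; abelianness of $N$ lets this factor commute into place, and coarse Lipschitzness of $t$ says it lies in a finite set. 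The analogous issue reappears when bounding the $N\rtimes Q$-length of the $N$-part for $\varphi^{-1}$. In short, the ``compatible generating sets'' framing misplaces where the work lives; the hypothesis on $t$ must enter both summands, not just the second.
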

\begin{proof}
 Suppose there exists a coarse equivalence $\varphi$ that fits into the above diagram. Then there exists a split $t':Q\to N\rtimes Q$. Then $t:=\varphi\circ t'$ is a coarsely Lipschitz transversal.
 
 Now suppose $t:Q\to G$ is a coarsely Lipschitz transversal. Let us recall that $G$ can be written $G=N\times Q$ with the group operation given by
 \[
  (a_1,x_1)\cdot(a_2,x_2)=(a_1\varphi(x_1)(a_2)f(x_1,x_2),x_1x_2)
 \]
where $\varphi(x)(a):=t(x)at(x)^{-1}$ and $f(x_1,x_2)=t(x_1)t(x_2)t(x_1x_2)^{-1}$. Conversely the semidirect product $N\rtimes Q$ can be written as $N\times Q$ with the group operation given by
\[
  (a_1,x_1)\ast(a_2,x_2)=(a_1\varphi(x_1)(a_2),x_1x_2).
 \]
 Now we check that the identity on $N\times Q$ viewed as a map $\alpha:G\to N\rtimes Q$ is both coarsely Lipschitz and coarsely injective. Let $(a,x)\in G$ be an element. If $t^{\times 2}(\Delta_{x^{-1}})\subseteq \Delta_{g_1}\cup\cdots\cup\Delta_{g_n}$ then we show $\alpha^{\times2}(\Delta_{(a,x)})\subseteq \Delta_{(at(x)^{-1}g_1,x)}\cup\cdots \cup\Delta_{(at(x)^{-1}g_n,x)}$. Let $((a_1,x_1),(a_2,x_2))\in \Delta_{\cdot(a,x)}$ be an element. Then
 \begin{align*}
  (a,x)
  &=(a_1,x_1)^{-1\cdot}\cdot(a_2,x_2)\\
  &=(t(x_1)^{-1}a_1^{-1}t(x_1^{-1})^{-1},x_1^{-1})\cdot(a_2,x_2)\\
  &=(t(x_1)^{-1}a_1^{-1}t(x_1^{-1})^{-1}t(x_1^{-1})a_2t(x_1^{-1})^{-1}t(x_1^{-1})t(x_2)t(x_1^{-1}x_2),x_1^{-1}x_2)\\
  &=(t(x_1)^{-1}a_1^{-1}a_2t(x_2)t(x_1^{-1}x_2),x_1^{-1}x_2).
 \end{align*}
And then
\begin{align*}
 (a_1,x_1)^{-1\ast}\ast(a_2,x_2)
 &=(t(x_1)^{-1}a_1^{-1}t(x_1),x_1^{-1})\ast(a_2,x_2)\\
 &=(t(x_1)^{-1}a_1^{-1}t(x_1)t(x_1^{-1})a_2t(x_1^{-1})^{-1},x_1^{-1}x_2)\\
 &=(t(x_1)^{-1}a_1^{-1}a_2t(x_1),x)\\
 &=(at(x)^{-1}t(x_2)^{-1}t(x_1),x)\\
 &\in\{(at(x)^{-1}g_1,x),\cdots,(at(x)^{-1}g_n,x)\}.
\end{align*}
Thus $\alpha$ is coarsely Lipschitz. Now we prove $\alpha$ is coarsely injective. Let $(b,x)\in N\rtimes Q$ be an element. If $t^{\times 2}(\Delta_{x})\subseteq \Delta_{h_1}\cup\cdots\cup\Delta_{h_n}$ then we show $(\alpha^{-1})^{\times2}(\Delta_{(b,x)})\subseteq \Delta_{(ah_1t(x),x)}\cup\cdots \cup\Delta_{(ah_nt(x),x)}$. Let $((a_1,x_1),(a_2,x_2))\in \Delta_{\ast(b,x)}$ be an element. Then $b=t(x_1)^{-1}a_1^{-1}a_2t(x_1)$ and $x=x_1^{-1}x_2$. Then
\begin{align*}
  (a_1,x_1)^{-1\cdot}\cdot(a_2,x_2)
  &=(t(x_1)^{-1}a_1^{-1}a_2t(x_2)t(x_1^{-1}x_2),x_1^{-1}x_2)\\
  &=(t(x_1)^{-1}a_1^{-1}a_2t(x_1)t(x_1)^{-1}t(x_2)t(x_1^{-1}x_2),x)\\
  &=(bt(x_1)^{-1}t(x_2)t(x),x)\\
  &\in \{(bh_1t(x),x),\ldots,(bh_nt(x),x)\}.
 \end{align*}
 Thus $\alpha$ is coarsely injective.
\end{proof}

\begin{lem}
 If $\pi:G\to Q$ is a quotient with a coarsely Lipschitz transversal and $\chi$ is a character on $Q$ with $\chi\circ \pi\in\Sigma^m(G,\ZZ)/\Sigma^m(G)$ then $\chi\in \Sigma^m(Q,\ZZ)/\Sigma^m(Q)$.
\end{lem}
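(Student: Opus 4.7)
The plan is to pass through Theorem~\ref{thm:metric}, which converts the claim about $\Sigma$-invariants into a claim about coarse finiteness properties of the subspaces $G_{\chi\circ\pi}\subseteq G$ and $Q_\chi\subseteq Q$, and then to realize $Q_\chi$ as a coarse retract of $G_{\chi\circ\pi}$. Specifically, Theorem~\ref{thm:metric} says that $\chi\circ\pi\in\Sigma^m(G,\ZZ)$ (resp.\ $\Sigma^m(G)$) is equivalent to $(G_{\chi\circ\pi},d)$ being of type $\Flg m$ (resp.\ $\Ftp m$), and the desired $\chi\in\Sigma^m(Q,\ZZ)$ (resp.\ $\Sigma^m(Q)$) is equivalent to $(Q_\chi,d)$ being of the same type. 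So I only need to transfer the finiteness property.

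The first concrete step is to set up the two candidate maps. Since $\pi\circ t=\id_Q$, for $q\in Q_\chi$ we have $\chi(\pi(t(q)))=\chi(q)\ge 0$, so $t(q)\in G_{\chi\circ\pi}$; being a restriction of the given coarsely Lipschitz $t$, the map $t\colon Q_\chi\to G_{\chi\circ\pi}$ is itself coarsely Lipschitz. Conversely, $\pi\colon G\to Q$ is a surjective group homomorphism, hence $1$-Lipschitz with respect to the word metrics coming from a generating set $X$ of $G$ and its image $\pi(X)$ in $Q$, and by definition $\pi(G_{\chi\circ\pi})\subseteq Q_\chi$. The identity $\pi\circ t=\id_{Q_\chi}$ therefore exhibits $Q_\chi$ as a coarse retract of $G_{\chi\circ\pi}$ with section $t$ and retraction $\pi$.

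The second step is to apply the Vietoris--Rips functor and the homotopy/homology functors on ind-spaces discussed in Section~2. Coarsely Lipschitz maps induce morphisms of ind-objects $(\VR k Q_\chi)_k$ and $(\VR k G_{\chi\circ\pi})_k$, and the composition $\pi\circ t=\id$ at the level of metric spaces produces the identity ind-morphism. Applying $\tilde\hlgy_q$ or $\pi_q$ yields a factorization of the identity on the relevant ind-object of $Q_\chi$ through the corresponding ind-object of $G_{\chi\circ\pi}$. If the latter is essentially trivial, i.e.\ isomorphic to the final object in the appropriate ind-category, then $t_*$ factors through that final object and is therefore the zero (or trivial) morphism; since $\pi_*\circ t_*$ equals the identity, the identity on $\tilde\hlgy_q(\VR\bullet Q_\chi)$ (resp.\ $\pi_q(\VR\bullet Q_\chi)$) is trivial, forcing it to be essentially trivial too.

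The only real obstacle is the formal verification that a retract (in the ind-category) of an essentially trivial ind-object is essentially trivial; this is immediate from the universal property of the final object and works uniformly for ind-abelian-groups (higher homology and higher $\pi_q$), ind-groups ($\pi_1$), and ind-pointed-sets ($\pi_0$). It is worth emphasizing that we do \emph{not} need $\pi$ to be a coarse equivalence between $G_{\chi\circ\pi}$ and $Q_\chi$ — the fibres are cosets of $\ker\pi$ and may be unbounded — the one-sided identity $\pi\circ t=\id$ is all the argument uses.
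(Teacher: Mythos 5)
Your argument is correct and matches the paper's proof essentially step for step: both establish that $\pi$ and $t$ restrict to coarsely Lipschitz maps between $G_{\chi\circ\pi}$ and $Q_\chi$ satisfying $\pi'\circ t'=\id_{Q_\chi}$, both invoke Theorem~\ref{thm:metric} to translate into coarse finiteness of these subspaces, and both conclude by passing to ind-objects and observing that a retract of an essentially trivial ind-object is essentially trivial. The only difference is that you spell out the retract-of-trivial-is-trivial step more explicitly than the paper does; the substance is the same.
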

Compare with 
\cites[Corollary~2.8]{Meinert1997}[Corollary~3.14]{Meinert1996}[Theorem~8]{
Alonso1994}[Proposition~2.8]{Almeida2018}.
\begin{proof}
 Suppose $\chi\circ\pi\in\Sigma^m(G,\ZZ)/\Sigma^m(G)$. Since $g\in G_{\chi\circ\pi}$ iff $\chi\circ\pi(g)\ge 0$ iff $\pi(g)\in Q_\chi$ we obtain $\pi(G_{\chi\circ\pi})\subseteq Q_\chi$. Also if $g,g'\in G$ with $\pi(g)=\pi(g')$ then $\chi\circ\pi(g)=\chi\circ\pi(g')$. So any two elements in the same fiber of $\pi$ have the same valuation and $\pi':=\pi|_{G_{\chi\circ\pi}}:G_{\chi\circ\pi}\to Q_\chi$ is well defined.
 
 Let $t:Q\to G$ be the coarsely Lipschtiz transversal. This means $\pi\circ t=\id_Q$. If $q\in Q_\chi$ then $\pi\circ t(q)=q\in Q_\chi$ which implies $t(q)\in \pi^{-1}(Q_\chi)=G_{\chi\circ \pi}$. So the composition of $\pi'$ with $t':=t|_{Q_\chi}$ is well-defined and $\pi'\circ t'=\id_{Q_\chi}$. By assumption $G_{\chi\circ\pi}$ is of type $\Flg m/\Ftp m$. Then the homology/homotopy groups of $(\VR k{G_{\chi\circ\pi}})_k$ as  an ind-object vanishes in dimensions $q=0,\ldots,m-1$. Now $\id_{Q_\chi}$ factors over $G_{\chi\circ\pi}$ via $\pi',t'$ both of which are coarsely Lipschitz and therefore induce morphisms of ind-objects. This implies the homology/homotopy groups of $(\VR k{Q_\chi})_k$ also vanish in dimensions $q=0,\ldots,m-1$. This means $Q_\chi$ is of type $\Flg m/\Ftp m$ and therefore $\chi\in \Sigma^m(G,\ZZ)/\Sigma^m(G)$.
\end{proof}

\section{Criteria for homological finiteness properties}
\label{sec:crit1}
This section proves Theorem~\ref{thma:mulg}, Theorem~\ref{thma:varphilg} and Theorem~\ref{thma:algo1}. 
Also Algorithm~\ref{algo:sigmam} can be found in this section.
\begin{lem}
\label{lem:chainendoreadable}
If $G$ is a group and $\varepsilon_q:\chains q k G\to \chains q k G$ a chain 
endomorphism of $\ZZ G$-modules extending the identity on $\ZZ$ for 
$q=0,\ldots,m$ then for every $k\ge0$ there exists $l\ge 0$ and a chain 
homotopy $\lambda_q:\chains q k G\to \chains {q+1} l G$ joining $\varphi$ to 
$\id$ for every $q=-2,\ldots,m$.
\end{lem}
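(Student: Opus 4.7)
The plan is to construct the chain homotopy $\lambda_q$ by induction on $q$, starting from $\lambda_{-2} = \lambda_{-1} = 0$. The key input is that the standard resolution $\chain_\ast(\E G) \to \ZZ \to 0$ is an exact sequence of free $\ZZ G$-modules, since $\E G$ is contractible; the subcomplexes $\chain_\ast(\VR k G)$ sit inside this acyclic complex and exhaust it as $k \to \infty$. I will work with $G$-equivariant maps defined on orbit representatives and extended using freeness of $\ZZ[\Delta^q_k]$ as a $\ZZ G$-module.

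At the inductive step $q \geq 0$, assume $\lambda_{-1}, \ldots, \lambda_{q-1}$ have already been constructed so that $\partial_{p+1}\lambda_p + \lambda_{p-1}\partial_p = \varphi_p - \id_p$ holds for all $p < q$, with each $\lambda_p$ landing in $\chain_{p+1}(\VR{l'}{G})$ for some $l'$. For each basis element $\sigma \in \Delta^q_k$, set
\[
c_\sigma := \varphi_q(\sigma) - \sigma - \lambda_{q-1}\partial_q(\sigma) \in \chain_q(\E G).
\]
A routine calculation using $\partial \varphi = \varphi \partial$, $\partial^2 = 0$, and the inductive identity shows that $\partial_q c_\sigma = 0$ (and in the base case $q=0$, that $c_\sigma$ has augmentation zero, since $\varphi_0$ extends $\id_\ZZ$). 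By acyclicity of $\chain_\ast(\E G)$ there is some $b_\sigma \in \chain_{q+1}(\E G)$ with $\partial_{q+1} b_\sigma = c_\sigma$.

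The crucial finiteness observation is that $\Delta^q_k / G$ is finite: each orbit contains a representative of the form $(e, h_1, \ldots, h_q)$ with every $h_i$ in the finite ball $\{g \in G : l(g) \leq k\}$, where finiteness of the ball uses that $G$ is finitely generated. Hence only finitely many $b_\sigma$ are chosen across all orbit representatives and all $q \in \{0, \ldots, m\}$. Each $b_\sigma$ is a finite chain and therefore supported in $\chain_{q+1}(\VR{l_\sigma}{G})$ for some $l_\sigma$; I then take $l := \max_\sigma l_\sigma$. Setting $\lambda_q(\sigma) := b_\sigma$ on orbit representatives and extending $\ZZ G$-equivariantly produces the required $\lambda_q:\chains q k G \to \chains{q+1}{l}{G}$.

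The main obstacle is purely bookkeeping: one must verify that the inductively produced $c_\sigma$ is actually a cycle (so that it bounds in $\E G$), and that the finiteness of $\Delta^q_k / G$ genuinely yields a single $l$ that works simultaneously for all degrees $q = 0, \ldots, m$. Nothing in this argument requires $G$ to be of type $\Flg m$; only contractibility of $\E G$ and finite generation of $G$ are used.
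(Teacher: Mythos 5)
Your proof is correct and takes essentially the same approach as the paper: the paper also inducts on $q$, shows that $\varepsilon_q - \id_q - \lambda_{q-1}\partial_q$ lands in $\ker\partial_q$, lifts it through $\partial_{q+1}$ using acyclicity of the standard resolution (phrased via projectivity of $\chains q k G$ and surjectivity of $\partial_{q+1}$ onto its image), and then invokes finite generation of $\chains q k G$ as a $\ZZ G$-module to bound the image in some $\chains{q+1}{l}{G}$. Your explicit reduction to the finitely many orbit representatives in $\Delta^q_k/G$ is just the concrete form of the paper's finite-generation step.
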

\begin{proof}
 For each $q\ge -1$ set $\sigma_q:=\varepsilon_q-\id_q$. Since $\sigma_{-1}=0$ 
we let $\lambda_{-2}:=0,\lambda_{-1}:=0$. Then
 \[
  \sigma_{-1}=0=\lambda_{-2}\circ \partial_{-1}+\partial_0\circ \lambda_{-1}.
 \]
Now suppose $m\ge q\ge 0$ and $\lambda_{-2},\ldots,\lambda_{q-1}$ have been constructed. We prove $\mbox{im}(\sigma_q-\lambda_{q-1}\circ\diff q)\subseteq \ker \partial_q$:
\begin{align*}
 \partial_q\circ(\sigma_q-\lambda_{q-1}\circ \partial_q)
 &=\partial_q\circ\sigma_q-\partial_q\circ \lambda_{q-1}\circ \partial_q\\
 &=\partial_q\circ\sigma_q-(\sigma_{q-1}-\lambda_{q-2}\circ \partial_{q-1})\circ\partial_q\\
 &=\partial_q\circ\sigma_q-\sigma_{q-1}\circ\partial_q\\
 &=0.
\end{align*}
Thus there is a diagram
\[
 \xymatrix{
 &\chains q k G\ar[d]^{\sigma_q-\lambda_{q-1}\circ\partial_q}\ar@{..>}[dl]_{\exists \lambda_q}\\
 \ZZ[G^{q+2}]\ar[r]_{\partial_{q+1}}
 &\diff q (\ZZ[G^{q+2}]).
 }
\]
Since $\chains q k G$ is projective and $\partial_{q+1}$ is surjective onto its image there exists a $\ZZ G$-linear map $\lambda_q:\chains q k G\to \ZZ[G^{q+2}]$ such that $\partial_{q+1}\circ \lambda_q=\sigma_q-\lambda_{q-1}\circ\partial_q$. Hence we have $\varphi_q-\id_q=\sigma_q=\partial_{q+1}\circ \lambda_q+\lambda_{q-1}\circ \partial_q$ as required. Since $\chains q k G$ is finitely generated free the image of $\lambda_q$ lies in some $\chains {q+1} l G$. This way we have defined $\lambda_q$ on $\chains q k G$.
\end{proof}

\begin{thm}
\label{thm:sigmam-crit}
 If $G$ is a group of type $\Flg m$ and $\chi:G\to \RR$ a non-zero character then the following are equivalent
 \begin{enumerate}
 \item $\chi\in \Sigma^m(G;\ZZ)$;
 \item there exist $(k_0,\ldots,k_m)\in \NN^{m+1}$ so that $\tilde 
\hlgy_q(\VR {k_q}{G_\chi})$ vanishes in $\tilde \hlgy_q(\VR{k_{q+1}}{G_\chi})$ 
for each $q=0,\ldots,m-1$;
 \item there exists a chain endo of $\ZZ G$-complexes $\varphi_q:\chains q k 
G\to \chains q k G$ extending the identity on $\ZZ$ with 
$v(\varphi_q(c))-v(c)>0$ for every $c\in \chains q k G, q=0,\ldots,m$ and 
$k$ large enough.
 \end{enumerate}
\end{thm}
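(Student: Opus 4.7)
The plan is to prove $(1)\Leftrightarrow(2)$ directly from the metric characterization, and then prove the cycle $(2)\Rightarrow(3)\Rightarrow(1)$. The first equivalence is essentially a restatement: by Theorem~\ref{thm:metric}, $\chi\in\Sigma^m(G;\ZZ)$ if and only if $G_\chi$ is of type $\Flg m$, which by Definition~\ref{dfn:finiteprops-metric} means $(\tilde\hlgy_q(\VR k{G_\chi}))_k$ is essentially trivial in the ind-category of abelian groups for each $q<m$. This unpacks to the sequence of vanishings in (2) after choosing indices $k_0\le\cdots\le k_m$ inductively in $q$.

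For $(2)\Rightarrow(3)$, I would fix $t\in G$ with $\chi(t)>0$ and build $\varphi_q$ by induction on $q$, ensuring that on each $G$-orbit representative $\sigma$ the raise $v(\varphi_q(\sigma))-v(\sigma)$ is at least some uniform $\epsilon_q>0$. In degree zero, set $\varphi_0((g))=(gt^N)$ for large $N$; this is $G$-equivariant under left translation, maps $\chains 0 k G$ to itself, extends the identity on $\ZZ$, and raises $v$ by $\epsilon_0=N\chi(t)$. For $q\ge1$, the cycle $\varphi_{q-1}(\partial\sigma)$ has $v$-value at least $v(\sigma)+\epsilon_{q-1}$, hence lies in $\chains{q-1} k{G_{\ge L}}$ for some $L\ge v(\sigma)+\epsilon_{q-1}$; by (2) combined with Lemma~\ref{lem:GL} (which states that $G_L$ has the same coarse type as $G_\chi$ and thus is also of type $\Flg m$), this cycle bounds in $\chains q k{G_{\ge L-B}}$ for a uniform constant $B$ determined by the filling. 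Taking this filler as $\varphi_q(\sigma)$ and extending $G$-equivariantly yields $\epsilon_q\ge\epsilon_{q-1}-B$, so the initial choice $N\chi(t)>mB$ guarantees $\epsilon_q>0$ throughout $q\le m$; the VR-level $k$ must be chosen large enough up front to accommodate every inductive filling.

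For $(3)\Rightarrow(1)$, let $z\in\chains q k{G_\chi}$ be a cycle. Since $G$ is of type $\Flg m$, there is $c_0\in\chains{q+1} l G$ with $\partial c_0=z$. By Lemma~\ref{lem:chainendoreadable}, $\varphi$ is chain homotopic to the identity via some $\lambda$; iterating gives $\varphi^N-\id=\partial\lambda^{(N)}+\lambda^{(N)}\partial$ with $\lambda^{(N)}=\sum_{i=0}^{N-1}\varphi^i\lambda$, so $z=\partial(\varphi^N(c_0)-\lambda^{(N)}(z))$. Since $\varphi$ raises $v$ by a uniform $\epsilon>0$, for $N$ large $v(\varphi^N(c_0))\ge v(c_0)+N\epsilon$ is arbitrarily large, while a standard valuation estimate on chain homotopies (compare \cite[Lemma~2.1]{Bieri1988}) gives $v(\lambda^{(N)}(z))\ge v(z)-B$ for some constant $B$. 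Hence $z$ bounds in $\chains{q+1}{l'}{G_{\ge -B}}$, and since $G_{\ge -B}$ is coarsely equivalent to $G_\chi$ by Lemma~\ref{lem:GL}, the ind-objects $(\tilde\hlgy_q(\VR k{G_\chi}))_k$ and $(\tilde\hlgy_q(\VR k{G_{\ge -B}}))_k$ are isomorphic, so the vanishing just obtained transfers back and (2) holds.

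The main obstacle will be the careful parameter bookkeeping in $(2)\Rightarrow(3)$: one must choose $N$, the VR-level $k$, and the filling constants $B_q$ jointly so that every inductive step keeps $\varphi_q$ within the same $\chains q k G$ while maintaining a strictly positive uniform $v$-raise. A subtler secondary point, which I would address through the ind-object isomorphism coming from Lemma~\ref{lem:GL}, is converting "boundary in $G_{\ge -B}$" into essential triviality of $(\tilde\hlgy_q(\VR k{G_\chi}))_k$ in the final step of $(3)\Rightarrow(1)$.
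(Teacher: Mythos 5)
Your direction $(1)\Rightarrow(2)$ is obvious, and your $(3)\Rightarrow(1)$ follows essentially the same route as the paper: produce the chain homotopy $\lambda$ from Lemma~\ref{lem:chainendoreadable}, iterate $\varphi$ to push a filler's valuation up, and control $\lambda$ on $z$ by a fixed lower bound (the paper cites Lemma~2.1 of Bieri--Strebel for this), so that a cycle of nonnegative valuation bounds in a level set $G_K$ with $K$ independent of the cycle. You should be a bit more careful in the final transfer back to $G_\chi$: the correct tool is the lemma asserting that for $0\le K-L\in\im\chi$ the homology $\homology q k{G_K}$ vanishes in $\homology q l {G_K}$ iff the same holds over $G_L$ (this uses \emph{left} translation by a fixed group element and so keeps the Vietoris--Rips index unchanged), not the coarse-equivalence Lemma~\ref{lem:GL} whose right translations shift the VR index. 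Also, your opening claim that $(1)\Leftrightarrow(2)$ is ``essentially a restatement'' is inaccurate for the direction $(2)\Rightarrow(1)$: statement (2) records a single vanishing vector, while essential triviality is a quantified statement over all indices; this direction is genuinely nontrivial and is recovered only by going around the cycle through (3). Since you do prove the cycle, this is harmless but should not be presented as a restatement.

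The real gap is in your $(2)\Rightarrow(3)$. You translate by $t^N$ for a ``large'' $N$ and allow each inductive filling to lose a uniform constant $B$, planning to choose $N$ afterward so that $N\chi(t)>mB$. But the quantities are coupled: right translation by $t^N$ spreads vertices by up to $2Nl(t)$, so the VR level on which $\varphi_0(\partial\sigma)$ and all subsequent boundaries live grows with $N$; the index $k$ needed ``up front to accommodate every inductive filling'' therefore depends on $N$, the set of orbit representatives in $\Delta^q_k\cap(1_G\times G^q)$ over which you take the minimum grows with $k$, and hence your constant $B$ depends on $N$. You flag this as ``the main obstacle'' but do not close the loop. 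The paper sidesteps it entirely: take $t$ a single generator ($l(t)=1$, so all shifts are bounded by the connecting vector), and observe there is no loss at all, because the boundary $\varphi_{q-1}(\partial\bar x)$ lies in $\chain_{q-1}(\VR{k_{q-1}}{G_{v(\bar x)+\chi(t)}})$, and by the left-translation lemma the vanishing of statement (2) at level $G_\chi$ holds verbatim at level $G_{v(\bar x)+\chi(t)}$ with the \emph{same} indices $(k_{q-1},k_q)$, since $v(\bar x)+\chi(t)\in\im\chi$. The filler is then taken \emph{inside} $G_{v(\bar x)+\chi(t)}$, so the valuation raise is exactly $\chi(t)$ at every level, $B=0$, and $N=1$ suffices. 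Your appeal to Lemma~\ref{lem:GL} (a coarse equivalence given by right translation) is the wrong tool here, as it necessarily trades VR index for valuation and creates exactly the phantom loss $B$ you then struggle to compensate for.
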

\begin{proof}
That statement 1 implies statement 2 is obvious.

We now show that statement 2 implies statement 3. We construct $\varphi_m$ inductively starting with $m=0$. Suppose $t\in G$ is a generator with $\chi(t)>0$. Then $\varphi_0$ is defined to send $g$ to $gt$. The so constructed map $\varphi_0$ raises the $v$-value by $\chi(t)$ and has image in $\ZZ[\Delta_0^0]$. Now we construct $\varphi_1$. If $\bar x=(1,g_1)\in \ZZ[G^2]$ then there exists a path $a_0=t,\cdots,a_n=g_1t$ in $G_{\min(\chi(t),\chi(g_1t))}$ joining $t$ to $g_1t$. Then 
\[
 \varphi_1(\bar x):=\sum_{i=0}^{n-1}(a_i,a_{i+1}).
\]
The so constructed map $\varphi_1$ raises the $v$-value by $\chi(t)$ and has image in $\ZZ[\Delta_1^1]$. Now suppose the homomorphism $\varphi_{q-1}$ has been constructed such that the image of $\varphi_{q-1}$ lies in $\ZZ[\Delta^{q-1}_k]$ and the $v$-value is raised by $\chi(t)$. Let $\bar x=(1,g_1,\cdots,g_q)\in \ZZ[G^{q+1}]$ be an element. Then $\varphi_{q-1}\circ \partial_q (\bar x)\in \ZZ[\Delta^{q-1}_k]$ with $v$-value raised by $\chi(t)$. Since $\diff {q-1}\circ \varphi_{q-1} \circ\diff q=\varphi_{q-2}\circ\diff {q-1}\circ\diff q=0$ we have $\varphi_{q-1}\circ\partial_q(\bar x)\in \ker \diff {q-1}$. Suppose $\homology {q-1} k {G_{v(\bar x)+\chi(t)}}$ vanishes in $\homology {q-1} l {G_{v(\bar x)+\chi(t)}}$. Then there exists some $\bar y\in \chains q l {G_{v(\bar x)+\chi(t)}}$ with $\diff q \bar y=\varphi_{q-1}\circ\diff q(\bar x)$. Define $\varphi_q(\bar x):=\bar y$. The so constructed map $\varphi_q$ raises the $v$-value by $\chi(t)$ and has image in $\ZZ[\Delta_l^q]$.

Now we show statement 3 implies statement 1. Suppose there exists a chain endomorphism $\varphi$ that raises the $\chi$-value. Let $k\ge0$ be an index. By Lemma~\ref{lem:chainendoreadable} there exists a chain homotopy $\lambda:\chains{m-1} k G\to \chains m {l_1} G$ joining $\varphi$ to the identity. Define
\[
 R:=\min_{x=(1_G,g_1,\ldots,g_m)\in\Delta^m_{l_1}}(v(\varphi_m(x))-v(x))>0.
\] 
By \cite[Lemma 2.1]{Bieri1988} 
\[
 v(\lambda(c))-v(c)\ge 
\min_{x=(1_G,g_1,\ldots,g_{m-1})\in\Delta^{m-1}_k}(v(\lambda(x))-v(x))=:K
\]
for every $c\in \chains {m-1} k G$. Let $z\in \chains {m-1} k {G_\chi}$ be a cycle. Then in particular $v(\lambda(z))\ge v(\lambda(z))-v(z)\ge K$. Then we define
\[
 c_0:=\lambda(z),\quad c_n:=\varphi_m^{\circ n}(c_0),\quad 
z_n:=\varphi_{m-1}^{\circ n}z
\]
where $\circ n$ denotes the $n$-fold composition of functions. Then
\[
 \diff m {c_n}=\varphi^{\circ n}_{m-1}\diff m {c_0}= \varphi^{\circ n}_{m-1}\circ\diff m \circ \lambda(z)=\varphi_{m-1}^{\circ n}(z-\varphi_{m-1}(z))=z_n-z_{n+1}.
\]
Since $G$ is of type $\Flg m$ there exists $c\in \chains m {l_2} G$ with $\diff m c=z$. Then $(n+1)R\ge -v(c)$ for some $n$. Define
\[
 \tilde c:=\sum_{i=0}^nc_i+\varphi^{\circ n+1}_m(c).
\]
Then
\[
 \diff m \tilde c=\sum_{i=0}^n\diff m c_i+\varphi^{\circ n+1}_{m-1}(\diff m c)=\sum_{i=0}^n(z_i-z_{i+1})+z_{n+1}=z
\]
and
\begin{align*}
 v(\tilde c)
 &\ge \min_i(v(c_i),\varphi^{\circ n+1}_m(c))\\
 &\ge \min (K,(n+1)R+v(c))\\
 &\ge \min (K,0).
\end{align*}
This way $\homology {m-1} k {G_\chi}$ vanishes in $\homology {m-1} {\max(l_1,l_2)} {G_K}$. Thus $\chi\in \Sigma^m(G,\ZZ)$.

\end{proof}

A vector $(n_0,\ldots,n_m)$ is said to be a homological connecting vector for 
$G$ if $\tilde \hlgy_q(\VR{n_q}G)$ vanishes in $\tilde \hlgy_q(\VR{n_{q+1}}G)$ 
for $q=0,\ldots,m-1$.

\begin{thm}
\label{thm:chainendo-fpm}
 If $G$ is a countable group then the following are equivalent:
 \begin{enumerate}
  \item $G$ is of type $\Flg m$;
  \item there exists a homological connecting vector $(n_0,\ldots, n_m)$ for 
$G$;
  \item there exists $K_0\ge 0$ and a chain endomorphism of $\ZZ G$ complexes $\mu_q:\ZZ[G^{q+1}]\to \ZZ[G^{q+1}]$ extending the identity on $\ZZ$ with $\im \mu_q\subseteq \chains q {K_0} G$ for $q=0,\ldots,m$.
 \end{enumerate}
\end{thm}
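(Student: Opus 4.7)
The plan is to establish the cycle of implications $1 \Rightarrow 2 \Rightarrow 3 \Rightarrow 1$. The implication $1 \Rightarrow 2$ is immediate: given that $\tilde\hlgy_q(\VR k G)_k$ is essentially trivial for $q = 0, \ldots, m-1$, start with any $n_0 \ge 0$ and inductively choose $n_{q+1}$ so that $\tilde\hlgy_q(\VR{n_q}G)$ vanishes in $\tilde\hlgy_q(\VR{n_{q+1}}G)$, continuing through $n_m$.

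For $2 \Rightarrow 3$, I will construct the chain endomorphism $\mu$ degree by degree, mimicking the scheme in the proof of Theorem~\ref{thm:sigmam-crit}. Set $\mu_0 := \id$, whose image lies in $\chains 0 0 G$ and which extends the identity on $\ZZ$ via the augmentation. For the inductive step, suppose $\mu_{q-1}$ has been constructed with $\im \mu_{q-1} \subseteq \chains{q-1}{n_{q-1}}G$. For each $\ZZ G$-generator $\bar x = (1, g_1, \ldots, g_q)$ of $\ZZ[G^{q+1}]$, the chain $\mu_{q-1}(\partial_q \bar x)$ lies in $\chains{q-1}{n_{q-1}}G$ and is a cycle: for $q \ge 2$ this follows from $\partial\,\mu_{q-1}\,\partial = \mu_{q-2}\,\partial^2 = 0$, while for $q = 1$ one uses that $\mu_0 = \id$ preserves the augmentation, so $\mu_0(\partial_1 \bar x) = g_1 - 1$ is a reduced $0$-cycle. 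The connecting vector hypothesis then produces $\bar y \in \chains q {n_q}G$ with $\partial_q \bar y = \mu_{q-1}(\partial_q \bar x)$; set $\mu_q(\bar x) := \bar y$ and extend $G$-equivariantly. After $m$ steps, take $K_0 := n_m$.

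For $3 \Rightarrow 1$, apply Lemma~\ref{lem:chainendoreadable} to obtain, for each $k$, some $l \ge 0$ and a chain homotopy $\lambda_q : \chains q k G \to \chains{q+1}{l}G$ satisfying $\mu - \id = \partial\lambda + \lambda\partial$. Fix $0 \le q \le m-1$ and a cycle $z \in \chains q k G$. Since the standard resolution $\chain_\ast(\E G)$ is acyclic (as $\E G$ is contractible), pick $c \in \ZZ[G^{q+2}]$ with $\partial_{q+1} c = z$. Then
\[
z \;=\; \mu_q(z) + \partial_{q+1}\lambda_q(z) \;=\; \partial_{q+1}\mu_{q+1}(c) + \partial_{q+1}\lambda_q(z) \;=\; \partial_{q+1}\bigl(\mu_{q+1}(c) + \lambda_q(z)\bigr),
\]
and $\mu_{q+1}(c) + \lambda_q(z) \in \chains{q+1}{\max(K_0, l)}G$. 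This exhibits the vanishing of $\tilde\hlgy_q(\VR k G)$ in $\tilde\hlgy_q(\VR{\max(K_0, l)}G)$, so $G$ is of type $\Flg m$.

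The main obstacle is the inductive step in $2 \Rightarrow 3$: at each stage one must track filtration levels carefully so that the cycle $\mu_{q-1}(\partial_q \bar x)$ sits exactly in $\chains{q-1}{n_{q-1}}G$ where the connecting vector guarantees a bounding chain, and one must handle the degree-zero augmentation separately so that reduced $0$-homology applies. The remaining two implications are essentially bookkeeping, once Lemma~\ref{lem:chainendoreadable} and the contractibility of $\E G$ are in hand.
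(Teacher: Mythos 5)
Your proof follows essentially the same path as the paper's: $1\Rightarrow2$ by iterating essential triviality, $2\Rightarrow3$ by inductively lifting $\mu_{q-1}\circ\partial_q$ along the connecting vector, and $3\Rightarrow1$ via Lemma~\ref{lem:chainendoreadable} and the contractibility of $\E G$. The only blemish is a sign slip in the last step --- with the convention $\mu-\id=\partial\lambda+\lambda\partial$ one gets $z=\partial_{q+1}\bigl(\mu_{q+1}(c)-\lambda_q(z)\bigr)$ rather than the plus sign you wrote --- which does not affect the conclusion, and your handling of the $q=1$ case via the augmentation is a nice explicit touch the paper leaves implicit.
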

 \begin{proof}
 That statement 1 implies statement 2 is obvious. 
 
 We now assume statement 2 and show statement 3. If $q=0$ then $\mu_0$ is defined to be the identity on $\ZZ[G]$. We have $\mbox{im }\mu_0\subseteq \chains 0 0 G$. If $\mu_0,\ldots,\mu_{q-1}$ have been constructed and $(1,g_1,\ldots,g_q)\in \Delta^q_k$ then $\mu_{q-1}\circ \partial_q(1,g_1,\ldots,g_q)\in \chains {q-1} {n_{q-1}} G$. Since $\homology {q-1} {n_{q-1}} G$ vanishes in $\homology{q-1}{n_q}{G}$ there exists $\bar y\in \chains q {n_q} G$ with $\diff q \bar y=\mu_{q-1}\circ \diff q(1,g_1,\ldots,g_q)$. Define $\mu_q(1,g_1,\ldots,g_q):=\bar y$. Then $\mbox{im }\mu_q\subseteq \chains q {n_q} G$.
 
  Now we show that statement 3 implies statement 1. Suppose we did already show that $G$ is of type $\Flg {m-1}$. Let $l\ge 0$ be a number. Since $\mu_q$ extends the identity on $\ZZ$ there exists by Lemma~\ref{lem:chainendoreadable} a chain homotopy $\lambda_q:\chains q l G \to \chains {q+1} n G$ joining $\id$ to $\mu$. If $z\in \chains {m-1} l G$ is a cycle then since $(\ZZ[G^{q+1}],\partial_q)$ is acyclic there exists $c\in \ZZ[G^{m+1}]$ with $\diff m c=z$. Then
  \[
   \diff m(\mu_m(c)+\lambda_{m-1}(z))=\mu_{m-1}\diff m c+\diff m \circ 
\lambda_{m-1}(z)=\mu_{m-1}(z)+z-\mu_{m-1}(z)=z.
  \]
 This shows $z$ is a boundary. Thus $\homology {m-1} l G$ vanishes in $\homology {m-1} {\max(K_0,n)} G$ which implies that $G$ is of type $\Flg m$.
 \end{proof}

  \IncMargin{1em}
 \begin{algorithm}[t!]
 \SetKwFunction{search}{search}
  \SetKwInOut{Input}{input}
  \SetKwInOut{Output}{output}
  \Input{A group $G$ of type $\mathrm{FP}_m$ with a homological connecting 
vector $(n_0,\ldots,n_m)$, a non-zero character $\chi:G\to \RR$}
  \Output{``yes, $\chi\in \Sigma^m(G;\ZZ)$'' or ``maybe''}
  Define $n:=\max(n_0,\ldots,n_m)$\;
  Pick $t\in G$ with $\chi(t)>0$\;
  Define $\ZZ G$-homomorphism $\varphi_0:\ZZ[G]\to \ZZ[G]$ by $1_G\mapsto t$\;
  \For{$q\leftarrow 1$ \KwTo $m$}{
  \ForEach{$\bar x:=(1_G,g_1,\ldots,g_q)\in \Delta^q_n$}{
 \search for $\bar y\in \ZZ[\Delta^q_n\cap G_{\chi(t)+v(\bar x)}^{q+1}]$ with 
$\diff q \bar y=\varphi_{q-1}\circ \diff q \bar x$\;\label{algo:search1}
  \If{\search terminates}{define $\varphi_q(\bar x):=\bar y$\;}
  \Else{\KwRet{``maybe''}\;}
  }
  }
 \KwRet{``yes''}
 \vspace{0.1cm}
  \caption{Sigma-m-criterion, homological version}
  \label{algo:sigmam}
  \end{algorithm}
Both for-loops in Algorithm~\ref{algo:sigmam} run over a finite set: 
$\{1,\ldots,m\}$ and $\{(1_G,g_1,\ldots,g_q)\in \Delta^q_n\}$ are computable 
and finite.

At some point in the algorithm one needs to compute an $N$-ball $\Delta_N[1_G]$ 
around $1_G$ in the Cayley-graph $\Gamma(G,X^{\le k})$ where $X^{\le k}$ are 
words of length at most $k_q$ and $N\ge 0$ is some large number. This problem is 
equivalent to computing the word length of a group element 
\cite[Proposition~5.7]{Meier2008}. So if the word problem is solvable we can construct the Cayley-graph. But also if the word problem is not solvable we can guess parts of the Cayley graph which is good enough in the scope of this discussion.
  
Note that inside the nested for-loops we let the function \search run for a fixed amount of time. If $0\not=\chi\in \Sigma^m(G;\ZZ)$ then the search for $\bar y$ does stop for every homological connecting vector $(k_0,\ldots,k_m)$ of $G$. We can make that precise:
  
\begin{thm}
\label{thm:algo1}
 Let $G$ be a group with homological connecting vector $(k_0,\ldots,k_m)$ with a maximum at $k:=\max(k_0,\ldots,k_m)$ and solvable word-problem. If $\chi:G\to \RR$ is a non-zero character then the following are equivalent:
 \begin{enumerate}
  \item $\chi\in\Sigma^m(G;\ZZ)$;
  \item there exists a chain endomorphism of $\ZZ G$-complexes $\varphi_q:\chains q k G\to \chains q k G$ extending the identity on $\ZZ$ with $v(\varphi_q(c))-v(c)>0$ for every $c\in \chains q k G$, $q=0,\ldots,m$;
  \item Algorithm~\ref{algo:sigmam} terminates with output ``yes''.
 \end{enumerate}
\end{thm}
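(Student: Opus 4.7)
My plan is to establish the cycle of implications $(3) \Rightarrow (2) \Rightarrow (1) \Rightarrow (3)$, with the substantive work concentrated in the last step.

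For $(3) \Rightarrow (2)$, I would observe that when Algorithm~\ref{algo:sigmam} halts with output ``yes'', the assignments $\varphi_q(\bar x) := \bar y$ on basis simplices $\bar x \in \Delta^q_n$ extend $\ZZ G$-equivariantly to a chain endomorphism $\varphi_q : \chains q k G \to \chains q k G$. It agrees with the identity on $\ZZ$ (because $\varphi_0(1_G) = t$ has augmentation $1$ and each successive $\varphi_q$ is chosen to have the prescribed boundary), and the search specification $\bar y \in \chains q n {G_{\chi(t)+v(\bar x)}}$ together with \cite[Lemma~2.1]{Bieri1988} gives $v(\varphi_q(c)) - v(c) \ge \chi(t) > 0$ on every chain $c$.

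For $(2) \Rightarrow (1)$, I would invoke the $(3) \Rightarrow (1)$ direction of Theorem~\ref{thm:sigmam-crit}. Given $\varphi$ at scale $k$, Lemma~\ref{lem:chainendoreadable} supplies a chain homotopy between $\varphi$ and the identity; iterating $\varphi_m$ on a given cycle $z \in \chains{m-1}{k}{G_\chi}$ together with an arbitrary bounding chain of $z$ (which exists since $G$ is of type $\Flg m$) produces a bounding chain for $z$ whose $\chi$-value is nonnegative. The argument is insensitive to the specific value of $k$, so it applies verbatim to $k = \max(k_0, \ldots, k_m)$.

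The main step is $(1) \Rightarrow (3)$, where I must certify that for every $q \in \{1, \ldots, m\}$ and every basis simplex $\bar x$, the inner \texttt{search} will succeed in finding a bounding chain $\bar y \in \chains q n {G_{\chi(t)+v(\bar x)}}$ of $\varphi_{q-1} \circ \partial_q \bar x$. I plan to combine three ingredients: the chain endomorphism $\mu$ from Theorem~\ref{thm:chainendo-fpm} (mapping arbitrary simplices into $\chains q k G$), a $\chi$-value-raising chain endomorphism $\tilde\varphi$ provided by Theorem~\ref{thm:sigmam-crit} applied to $\chi \in \Sigma^m(G;\ZZ)$ (which may a priori live at a larger scale), and a chain homotopy $\lambda$ joining $\mu$ to the identity obtained from Lemma~\ref{lem:chainendoreadable}. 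Applying $\mu$ to push bounding chains back into $\chains q k G$ while iterating $\tilde\varphi$ enough times to restore the positive $\chi$-shift yields a chain of scale $n = k$ with the required boundary and $\chi$-value, which is the witness \texttt{search} needs. Solvability of the word problem then ensures that \texttt{search} can recursively enumerate candidates in $\chains q n G$ and test boundary equalities, so it halts upon encountering such a witness.

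The principal obstacle is precisely this scale reconciliation inside $(1) \Rightarrow (3)$: Theorem~\ref{thm:sigmam-crit} naturally yields a bounding chain whose scale is governed by the connecting vector of $G_\chi$, which need not be dominated by $n$. Using $\mu$ together with $\lambda$ to project bounding chains down to scale $k$ without sacrificing the positive $\chi$-shift is the key technical step, and verifying that the resulting chain genuinely lies in $\chains q n {G_{\chi(t)+v(\bar x)}}$ rather than in some larger complex is where the proof will need to be most careful.
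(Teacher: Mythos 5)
Your overall plan (the cycle $(3)\Rightarrow(2)\Rightarrow(1)\Rightarrow(3)$) and your steps $(3)\Rightarrow(2)$ and $(1)\Rightarrow(3)$ are essentially what the paper does, with the scale reconciliation via $\mu$ from Theorem~\ref{thm:chainendo-fpm} correctly identified as the crux of $(1)\Rightarrow(3)$. The gap is in your $(2)\Rightarrow(1)$.

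You claim that having a single chain endomorphism $\varphi$ at the fixed scale $k=\max(k_0,\ldots,k_m)$ lets you ``invoke the $(3)\Rightarrow(1)$ direction of Theorem~\ref{thm:sigmam-crit}'' directly, because ``the argument is insensitive to the specific value of $k$.'' This is not so. Statement~(3) of Theorem~\ref{thm:sigmam-crit} requires a valuation-raising endomorphism of $\chain_*(\VR{n}{G})$ for \emph{all sufficiently large} $n$, not just for one fixed $k$. And its proof genuinely needs that: given a cycle $z\in\chains{m-1}{n}{G_\chi}$, the chain homotopy $\lambda$ from Lemma~\ref{lem:chainendoreadable} sends it to $c_0=\lambda(z)\in\chains{m}{l_1}{G}$ with $l_1>n\ge k$ in general, and the iterates $c_i=\varphi_m^{\circ i}(c_0)$ are only defined if $\varphi$ is available at scale $l_1$. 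With $\varphi$ defined only on $\chains{*}{k}{G}$, those iterates do not make sense and the proof stalls. Separately, to conclude $\chi\in\Sigma^m(G;\ZZ)$ you must show vanishing for cycles at \emph{every} scale $n$, including $n>k$, and a $\varphi$ defined only at scale $k$ cannot even be evaluated on such cycles.

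The missing ingredient is exactly the one you deploy in $(1)\Rightarrow(3)$: for an arbitrary $n\ge k$, take $\mu_*\colon\chain_*(\VR{n}{G})\to\chain_*(\VR{n}{G})$ from Theorem~\ref{thm:chainendo-fpm} with image in $\chain_*(\VR{k}{G})$, precompose with it, and post-iterate $\varphi$ enough times to overcome the (finite, since $\mu$ is determined on the finitely many simplices $(1_G,g_1,\ldots,g_q)\in\Delta^q_n$) negative shift $K$ incurred by $\mu$. The composite $\varphi_*^{\circ(i+1)}\circ\mu_*$ is then a valuation-raising endomorphism at scale $n$, and this for every $n\ge k$ is exactly statement~(3) of Theorem~\ref{thm:sigmam-crit}. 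This is what the paper does; without it, your $(2)\Rightarrow(1)$ does not close.

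A smaller remark: you also need to pass from the ``raises by $>0$'' hypothesis of statement~(2) to the uniform shift $\ge\chi(t)$ that the algorithm and Theorem~\ref{thm:sigmam-crit} ultimately need. This follows from \cite[Lemma~2.1]{Bieri1988} applied to the finitely many $\ZZ G$-basis simplices $(1_G,g_1,\ldots,g_q)\in\Delta^q_k$ (giving a uniform $\varepsilon>0$), together with iterating $\varphi$; you gesture at this in $(3)\Rightarrow(2)$ but it should also be made explicit where it is used.
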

\begin{proof}
We first show statement 1 implies statement 2. Suppose statement 1 that $\chi\in \Sigma^m(G;\ZZ)$. Then Theorem~\ref{thm:sigmam-crit} implies there exists a number $l\ge 0$ and for every $n\ge l$ a chain endomorphism $\varphi_{n,*}:\chains * n G\to \chains * n G$ extending the identity on $\ZZ$, raising valuation and $\im \varphi_q \subseteq \chains q l G$ for every $q=0,\ldots,m$. If $k\ge l$ denote by $\iota_{lk}$ the inclusion $\chains * l G \subseteq \chains * k G$. Then $\iota_{lk}\circ \varphi_{k,*}$ is the desired chain endomorphism. If conversely $k<l$ then since $(k_0,\ldots,k_m)$ is a homological connecting vector for $G$ Theorem~\ref{thm:chainendo-fpm} implies for every $n\ge k$ there exists a chain endomorphism $\mu_{n,*}:\chains * n G\to \chains * n G$ extending the identity on $\ZZ$ with $\im \mu_q\subseteq \chains q k G$ for every $q=0,\ldots,m$. Then
\begin{align*}
 \inf_{\substack{c\in\chains q l G\\q=0,\ldots,m}} (v(\mu_q(c))-v(c))
 &\ge \min_{\substack{\sigma\in \Delta^q_l\cap (1_G\times G^q)\\q=0,\ldots,m}}(v(\mu_q(\sigma))-v(\sigma))\\
 &=:K\in \mathbb R
\end{align*}
Then $i\chi(t)\ge -K$ for some $i\in \mathbb N$. Then denote by $\iota_{kl}$ the inclusion $\chains * k G \subseteq \chains * l G$. Then $\mu_{l,*}\circ \varphi_{l,*}^{\circ i+1}\circ \iota_{kl}$ is the desired chain endomorphism:
\begin{align*}
 v(\mu_q\circ \varphi_{l,q}^{\circ i+1}\circ \iota_{kl}(c))
 &\ge K+v(\varphi_{l,q}^{\circ i+1}\circ \iota_{kl}(c))\\
 &\ge K+(i+1)\chi(t)+v(\iota_{kl}(c))\\
 &\ge \chi(t)+v(c)\\
 &>v(c).
\end{align*}

We now assume statement 2 and show statement 1. Let $n\ge k$ be a number. Since $(k_0,\ldots,k_m)$ is a homological connecting vector for $G$ there exists a chain endomorphism $\mu_*:\chains * n G\to \chains * n G$ extending the identity on $\ZZ$ with $\mbox{im }\mu_q\subseteq \chains q k G$ for each $q=0,\ldots,m$. Statement 2 also provides us with a chain endomorphism $\varphi_*:\chains q k G\to \chains q k G$ extending the identity on $\ZZ$ with $v(\varphi(c))-v(c)\ge \chi(t)$ for every $c\in \chains q k G$ and $q=0,\ldots,m$. Then as before
 \[
  v(\mu(x))-v(x)\ge K
 \]
for every $x=(1,g_1,\ldots,g_q)\in\Delta^q_n,q=0,\ldots,m$ for some $K\in \RR$. Then $i\chi(t)\ge -K$ for some $i\in \NN$. Then $\varphi_*^{\circ i+1}\circ \mu_*:\chains * n G\to \chains * n G$ is a chain endomorphism extending the identity on $\ZZ$ with
\[
 v(\varphi_q^{\circ i+1}\circ \mu_q(c))\ge (i+1)\chi(t)+v(\mu_q(c))\ge (i+1)\chi(t)+K+v(c)\ge \chi(t)+v(c)
\]
and $\mbox{im }\varphi_q^{\circ i+1}\circ \mu_q \subseteq \chains q k G$ for $q=0,\ldots,m$. By Theorem~\ref{thm:sigmam-crit} this proves that $\chi\in \Sigma^m(G;\ZZ)$.
 
If we assume statement 3 then Algorithm~\ref{algo:sigmam} terminates with output ``yes''. The data collected during the computation provides us with a chain endomorphism that satisfies the requirements of statement 2.

Now if we assume statement 2 then the chain endomorphism sought-for in Algorithm~\ref{algo:sigmam} does exist. Since the word problem is solvable in $G$ we can construct the Cayley graph and ultimately also the Vietoris-Rips complex which is countable data since the group is finitely generated. This way the function \search if implemented correctly does terminate. And so does Algorithm~\ref{algo:sigmam}.
 
\end{proof}

Now we introduce coordinates in order to determine for which set of non-zero characters a given chain endomorphism $\varphi$ is a proof for membership in $\Sigma^m$. We say $\varphi$ is a \emph{witness for a character} in that case.

If $X:=\{x_0,\ldots,x_n\}\subseteq G$ is a generating set for $G$ we define a mapping 
\begin{align*}
 u:\Hom(G,\RR)&\to \RR^n\\
 \chi&\mapsto \begin{pmatrix}\chi(x_1)\\\vdots\\\chi(x_n)\end{pmatrix}
\end{align*}
which is continuous by definition. We also define a mapping
\begin{align*}
 w_X:\F(X)&\to \ZZ^n\\
 w&\mapsto \begin{pmatrix}\varepsilon_1\\\vdots\\\varepsilon_n \end{pmatrix}
\end{align*}
where $\varepsilon_i$ are the sum of the exponents of $x_i$ in $w$. And if $t:G\to \F(X)$ is a transversal of the usual projection we define $w:=w_X\circ t$. Then 
\[
 \langle w(g),u(\chi)\rangle=\chi(g)
\]
does not depend on the representation $t$. A vector $y\in \RR^n$ is in the image of $u$ if for every word $r$ in the defining relations 
\[
 \langle w_X(r),y\rangle=0.
\]
So $u(\Hom(G,\RR))$ is a closed linear subspace of $\RR^n$. We show $\Sigma^m(G,\ZZ)$ is an open cone in $\Hom(G,\RR)$. If $c\in \ZZ[G^{q+1}]$ denote by $c^{(0)}$ the vertices in $G$ appearing in the chain. If $\varphi_q:\chains q k G\to \chains q k G$ is a chain endomorphism extending the identity on $\ZZ$ define a mapping 
\begin{align*}
 u_\varphi:\Hom(G,\RR)&\to \RR\\
 \chi&\mapsto \min_{\substack{\bar x=(1_G,g_1,\ldots,g_q)\in 
\Delta^q_k,\\q=0,\ldots,m}}\left(\min_{g\in \varphi_q(\bar x)^{(0)}}\langle 
w(g),v(\chi)\rangle-\min_{g\in \bar x^{(0)}}\langle w(g),v(\chi)\rangle\right).
\end{align*}
Now $u_\varphi$ is a continuous mapping so 
\[
 U(\varphi):=u_\varphi^{-1}(0,\infty)
\]
is open.

\begin{prop}
 Let $G$ be a group with homological connecting vector $(k_0,\ldots,k_m)$. If 
for 
$k:=\max(k_0,\ldots,k_m)$ there exists a chain endomorphism $\varphi_q:\chains 
q k G\to \chains q k G$ extending the identity on $\ZZ$ then 
$U(\varphi)\subseteq \Sigma^m(G,\ZZ)$.
\end{prop}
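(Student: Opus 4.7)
The plan is to show that any character $\chi \in U(\varphi)$ satisfies the value-raising hypothesis of Theorem~\ref{thm:sigmam-crit}(3) on all chains, and then conclude $\chi \in \Sigma^m(G,\ZZ)$.

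First I would unpack the definition of $U(\varphi)$. For $\chi \in U(\varphi)$, we have $u_\varphi(\chi) > 0$, and since $\langle w(g), u(\chi)\rangle = \chi(g)$, this is literally the statement
\[
 v(\varphi_q(\bar x)) - v(\bar x) \ge u_\varphi(\chi) > 0
\]
for every $\ZZ G$-basis element $\bar x = (1_G, g_1, \ldots, g_q) \in \Delta^q_k$ and every $q = 0, \ldots, m$. Note that since $G$ is finitely generated (implied by the existence of a connecting vector via Theorem~\ref{thm:chainendo-fpm}), the set of such basis elements (whose vertices all lie in the $k$-ball around $1_G$) is finite, so $u_\varphi$ is a finite minimum of continuous linear functions in $\chi$. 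In particular the above inequality is uniform in $\bar x$ for fixed $\chi$.

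Next I would promote this bound from basis elements to arbitrary chains via $G$-equivariance. Every simplex $\sigma \in \Delta^q_k$ is uniquely of the form $\sigma = g \cdot \bar x$ with $\bar x$ a basis element, and since $\chi$ is a homomorphism one checks $v(\sigma) = \chi(g) + v(\bar x)$ and, using $\ZZ G$-linearity of $\varphi_q$, $v(\varphi_q(\sigma)) = \chi(g) + v(\varphi_q(\bar x))$. Hence the difference $v(\varphi_q(\sigma)) - v(\sigma)$ is $G$-invariant and bounded below by $u_\varphi(\chi)$. For a general chain $c = \sum_\sigma n_\sigma \sigma$, the support of $\varphi_q(c)$ is contained in $\bigcup_\sigma \supp(\varphi_q(\sigma))$, since cancellation can only raise the valuation. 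Therefore
\[
 v(\varphi_q(c)) \ge \min_\sigma v(\varphi_q(\sigma)) \ge v(c) + u_\varphi(\chi) > v(c).
\]

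Finally, Theorem~\ref{thm:chainendo-fpm} applied to the given homological connecting vector ensures $G$ is of type $\Flg m$, and the previous step shows $\varphi$ strictly raises the $\chi$-valuation on every chain, so Theorem~\ref{thm:sigmam-crit} (statement 3 implies statement 1) yields $\chi \in \Sigma^m(G;\ZZ)$. I do not foresee a real obstacle: the only subtle point is the passage from basis elements to arbitrary chains, which is handled cleanly by the fact that $v$ shifts additively under left translation and cancellation never decreases the valuation.
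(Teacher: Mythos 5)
Your unpacking of $U(\varphi)$ and the explicit promotion from basis elements to arbitrary chains are both correct, and the latter is a detail the paper leaves implicit: the $G$-invariance computation $v(\varphi_q(g\bar x)) - v(g\bar x) = v(\varphi_q(\bar x)) - v(\bar x)$ together with the observation that cancellation can only raise the valuation is exactly what is needed to pass from the minimum over the finitely many basis simplices to all of $\chains q k G$.

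The final citation, however, is a genuine gap. You appeal to Theorem~\ref{thm:sigmam-crit}, statement~3 implies statement~1, but statement~3 requires a valuation-raising chain endomorphism on $\chains q k G$ \emph{for every sufficiently large $k$}, whereas your $\varphi$ lives only on $\chains q k G$ for the single fixed $k = \max(k_0,\ldots,k_m)$. The proof of that implication genuinely uses $\varphi$ at unboundedly many indices: for each test cycle one must start from an arbitrary $k$, and the chain homotopy $\lambda$ supplied by Lemma~\ref{lem:chainendoreadable} pushes out to a possibly larger index $l_1$ on which $\varphi_m$ must again be defined. What you have actually established is statement~2 of Theorem~\ref{thm:algo1}, and you should appeal to the proof of the implication from statement~2 to statement~1 of that theorem instead (that direction does not use the solvable-word-problem hypothesis). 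That argument fixes an arbitrary $n\ge k$, uses the homological connecting vector via Theorem~\ref{thm:chainendo-fpm} to produce a chain endomorphism $\mu_*:\chains * n G\to \chains * n G$ with image inside the $k$-ball, observes $v(\mu_q(c))-v(c)\ge K$ for some fixed $K\in\RR$, and then checks that $\varphi^{\circ(i+1)}\circ\mu$ raises valuation at index $n$ once $i\chi(t)\ge -K$. Without this $\mu$-composition step the passage from a single index $k$ to the ``for all $k$ large enough'' hypothesis does not close.
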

\begin{proof}
 Suppose $\chi\in U(\varphi)$. Then
 \begin{align*}
 0
 &<\min_{\bar x=(1_G,g_1,\ldots,g_q)\in \Delta^q_k,q=0,\ldots,m}\left(\min_{g\in \varphi_q(\bar x)^{(0)}}\langle w(g),v(\chi)\rangle-\min_{g\in \bar x^{(0)}}\langle w(g),v(\chi)\rangle\right)\\
 &=\min_{\bar x=(1_G,g_1,\ldots,g_q)\in \Delta^q_k,q=0,\ldots,m}\left(\min_{g\in \varphi_q(\bar x)^{(0)}}\chi(g)-\min_{g\in \bar x^{(0)}}\chi(g)\right)\\
 &=\min_{\bar x=(1_G,g_1,\ldots,g_q)\in 
\Delta^q_k,q=0,\ldots,m}\left(v(\varphi_q(\bar x))-v(\bar x)\right).
 \end{align*}
 By a similar argument as in the proof of Theorem~\ref{thm:algo1} we obtain $\chi\in \Sigma^m(G,\ZZ)$.
\end{proof}

\begin{thm}
 The subset $\Sigma^m(G,\ZZ)$ is an open cone in $\Hom(G,\RR)$ provided  
$\Sigma^m\not=0$ and $\Sigma^m\not=\emptyset$.
\end{thm}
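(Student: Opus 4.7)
The statement splits into two independent claims: that $\Sigma^m(G,\ZZ)$ is stable under scaling by positive reals (the \emph{cone} part), and that it is open in $\Hom(G,\RR)$ (the \emph{open} part). Throughout the plan I assume $G$ is of type $\Flg m$, which is implicit once $\Sigma^m(G,\ZZ)$ is non-empty (so that the chain-endomorphism criterion of Theorem~\ref{thm:sigmam-crit} is at our disposal) and which gives us a homological connecting vector by Theorem~\ref{thm:chainendo-fpm}.

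The cone part is immediate from the geometric description of $\Sigma^m$. For any non-zero character $\chi$ and any $\lambda>0$ one has
\[
G_{\lambda\chi}=\{g\in G\mid \lambda\chi(g)\ge 0\}=\{g\in G\mid \chi(g)\ge 0\}=G_\chi,
\]
so the two submonoids carry the same subspace metric. By Theorem~\ref{thm:metric}, membership in $\Sigma^m(G,\ZZ)$ depends only on the coarse type of $G_\chi$, hence $\chi\in\Sigma^m(G,\ZZ)$ if and only if $\lambda\chi\in\Sigma^m(G,\ZZ)$.

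For openness, I will reduce to the preceding proposition via the neighborhoods $U(\varphi)$. Given $\chi\in\Sigma^m(G,\ZZ)$, Theorem~\ref{thm:sigmam-crit} supplies some $k\ge 0$ (which I may take to be at least $\max(k_0,\ldots,k_m)$ for a homological connecting vector) and a $\ZZ G$-chain endomorphism $\varphi_q\colon\chains q k G\to\chains q k G$ extending the identity on $\ZZ$ with $v(\varphi_q(c))-v(c)>0$ for every $c\in\chains q k G$ and every $q=0,\ldots,m$. The preceding proposition then gives $U(\varphi)\subseteq\Sigma^m(G,\ZZ)$, so all I need is $\chi\in U(\varphi)$. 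Unpacking the definition of $u_\varphi$ and using $\langle w(g),u(\chi)\rangle=\chi(g)$, one has
\[
u_\varphi(\chi)=\min_{\substack{\bar x=(1_G,g_1,\ldots,g_q)\in\Delta^q_k\\ q=0,\ldots,m}}\bigl(v(\varphi_q(\bar x))-v(\bar x)\bigr),
\]
where the minimum ranges over a finite set (since $G$ is finitely generated, $\Delta^q_k\cap(\{1_G\}\times G^q)$ is finite). Each term is strictly positive by construction, so $u_\varphi(\chi)>0$ and hence $\chi\in U(\varphi)$.

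The only place that requires care is verifying finiteness of the indexing set in $u_\varphi(\chi)$, which hinges on $G$ being finitely generated (guaranteed by type $\Flg 1\subseteq \Flg m$) and on the fact that we pin the first vertex to $1_G$ so that $\ZZ G$-equivariance reduces the check to orbit representatives. This is the only slightly delicate step; everything else is either a direct translation of Theorem~\ref{thm:metric} (for the cone property) or a direct invocation of the previous proposition together with Theorem~\ref{thm:sigmam-crit} (for openness).
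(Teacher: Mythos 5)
Your argument is essentially the paper's own: openness is obtained by taking a chain endomorphism $\varphi$ witnessing $\chi\in\Sigma^m(G,\ZZ)$ via Theorem~\ref{thm:sigmam-crit} and observing that $\chi\in U(\varphi)\subseteq\Sigma^m(G,\ZZ)$ by the preceding proposition, and the cone property follows from invariance of $\Sigma^m$ under positive scaling. You actually supply a bit more detail than the paper, namely the explicit verification $u_\varphi(\chi)>0$ and the geometric justification $G_{\lambda\chi}=G_\chi$ for the scaling step, both of which are correct. The only thing you omit that the paper mentions is the observation that $0\in\Sigma^m(G,\ZZ)$ whenever $\Sigma^m(G,\ZZ)\neq\emptyset$ (since $G_0=G$ is of type $\Flg m$), which is needed so that the cone actually contains its apex; this is a one-line remark and does not affect the substance.
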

\begin{proof}
 If $\chi\not=0$ is a character on $G$ that belongs to $\Sigma^m$ then there exists some chain endomorphism $\varphi$ that is a witness for $\chi$ belonging to $\Sigma^m$. Then $U(\varphi)$ is an open neighborhood of $\chi$ in $\Sigma^m$. So $\Sigma^m\setminus\{0\}$ is open. Since for every $\lambda>0$ we have $\lambda\chi\in\Sigma^m$ if and only if $\chi\in\Sigma^m$ and also $0\in \Sigma^m$ if $\Sigma^m\not=\emptyset$ the set $\Sigma^m$ is a cone.
\end{proof}

\section{Criteria for homotopical finiteness properties}
\label{sec:crit2}
This section proves Theorem~\ref{thma:mutp}, Theorem~\ref{thma:varphitp} and Theorem~\ref{thma:algo2}. Also Algorithm~\ref{algo:sigma2} will be discussed in this section.

In the homological part we could just work with simplicial sets and their simplicial homology. Since the simplicial sets we are working with are not Kan complexes we cannot work with simplicial homotopy groups as a replacement for homotopy groups on spaces. In what follows we frequently use the simplicial approximation theorem. So it makes sense to work with $\Map(A,B)$ as morphism between simplicial sets $A,B$, where $\Map(A,B)$ denotes simplicial maps $\varepsilon:A'\to B$ with $A'$ obtained from $A$ by finitely many barycentric subdivisions. The standard $q$-simplex is denoted by $\Delta^q$. We hope this notation is not confused with $\Delta^q_k$ which denotes the set of $q$-simplices in $\VR k \cdot$. If $\tau$ is a $q$-simplex in a simplicial set $A$ then we also denote by $\tau$ the inclusion in $\Map(\Delta^q,A)$ representing $\tau$. Then the restriction of a map $\varepsilon\in \Map(A,B)$ to $\tau\in A$ becomes $\varepsilon\circ \tau$. Also $\partial_q$ denotes the inclusion $\partial \Delta^q\to \Delta^q$. And $\pi_q(\cdot)$ on simplicial sets denotes the $q$th homotopy group of the geometric realization.

\begin{lem}
\label{lem:homotopicalhomotopy}
If $G$ is a finitely generated group and $\varepsilon\in\Map({\VR k G}^{(m)},{\VR k G}^{(m)})$ a $G$-equivariant map then there exists a $G$-equivariant homotopy $\eta\in \Map({\VR k G}^{(m)}\times\Delta^1,{\VR l G}^{(m+1)})$ joining $\varepsilon$ to the identity.
\end{lem}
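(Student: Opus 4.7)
The plan is to construct $\eta$ skeleton by skeleton on a prismatic triangulation of ${\VR k G}^{(m)}\times\Delta^1$, exploiting that $\E G$ is the \emph{full} simplicial set on the vertex set $G$: every finite subset of $G$ spans a unique simplex. This is the topological analogue of the projectivity and acyclicity used algebraically in Lemma~\ref{lem:chainendoreadable}, and it is what allows the construction to proceed with no sphere-filling obstructions.

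After sufficiently many barycentric subdivisions, I would assume $\varepsilon$ and a simplicial approximation $\iota$ of the identity are both defined on a common subdivision $A$ of ${\VR k G}^{(m)}$, and equip $A\times\Delta^1$ with the standard prismatic triangulation whose $(m+1)$-dimensional cells are the staircase simplices $\{(v_0,0),\ldots,(v_i,0),(v_i,1),\ldots,(v_m,1)\}$ coming from each $m$-simplex $(v_0,\ldots,v_m)$ of $A$. On vertices I would set $\eta(v,0):=\iota(v)$ and $\eta(v,1):=\varepsilon(v)$. Since $G$ is finitely generated and acts on $A$ with finitely many orbits of vertices, the quantity $D:=\max_{v\in A_0}d(\iota(v),\varepsilon(v))$ is finite; take $l:=k+D$. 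For each staircase simplex the $m+2$ vertex images in $G$ have pairwise distances at most $l$: within each homogeneous half the distance is at most $k$ because $\iota,\varepsilon$ land in $\VR k G$, while a mixed pair is bounded via the triangle inequality by $d(\iota(v_j),\iota(v_l))+d(\iota(v_l),\varepsilon(v_l))\le k+D$. Hence these vertices span a simplex of $\VR l G$, and because $\E G$ is free the vertex assignment extends uniquely on each face. $G$-equivariance is built in from that of $\iota$ and $\varepsilon$, and the restrictions to the two ends produce $\iota$ and $\varepsilon$ as required.

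The main obstacle is purely bookkeeping: choosing barycentric subdivisions compatibly so that $\varepsilon$, $\iota$ and the prismatic decomposition of $A\times\Delta^1$ all fit into a single simplicial framework, and verifying that the restriction to $A\times\{0\}$ represents the identity inside $\Map$. Once this is set up, no finiteness hypothesis on $G$ beyond finite generation is needed, because the fullness of $\E G$ eliminates all obstructions; the value $l=k+D$ grows only because vertex images drift by at most $D$, not because any sphere has to be filled.
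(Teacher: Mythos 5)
Your approach — build $\eta$ in one pass on a prismatic triangulation of $A\times\Delta^1$, assign the vertex images by $\iota$ and $\varepsilon$, and then extend automatically because $\E G$ is free — is genuinely different from the paper's, which instead proceeds by induction on skeleta: for each generating $q$-simplex $\sigma$ it chooses a subdivision $T$ of $\Delta^q\times\Delta^1$ agreeing with $\sigma$, $\varepsilon_q\circ\sigma$ and $\eta_{q-1}$ on the three boundary pieces and, crucially, \emph{introducing no new vertices in the interior}, so the extension over the prism is again unobstructed. The underlying mechanism (freeness of $\E G$, cocompactness of the $G$-action to control $l$) is the same in both; your version is cleaner in that it produces the explicit bound $l=k+D$ with $D=\max_{v\in A_0}d(\iota(v),\varepsilon(v))$, whereas the paper only asserts that some $l$ works because the fundamental domain is finite.

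There is, however, a genuine gap that you flag but underestimate. Because you subdivide \emph{both} ends of the cylinder to a common $A=\SD^n({\VR k G}^{(m)})$, the time-$0$ restriction of your $\eta$ is $\iota$, a simplicial approximation of the identity, not the identity itself. In the paper's $\Map$ there is no identification of a map with its simplicial approximation — $\Map(S,T)$ is literally a set of simplicial maps $S'\to T$ — so $\iota$ and the inclusion $\id$ are different elements, and your $\eta$ joins $\varepsilon$ to $\iota$, not to $\id$ as the lemma asserts. Since the vertices of $A$ are barycenters, not group elements, there is no way for $\iota:A\to{\VR k G}^{(m)}$ to equal the identity unless $n=0$. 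The paper sidesteps this precisely by \emph{not} subdividing the time-$0$ face: in the inductive step the subdivision $T$ of $\Delta^q\times\Delta^1$ is required to restrict to the \emph{undivided} $\Delta^q$ on $\Delta^q\times 0$, so the bottom of the homotopy is literally the inclusion. The fix for your argument is to replace the uniform prism $A\times\Delta^1$ by an asymmetric triangulation of ${\VR k G}^{(m)}\times\Delta^1$ that is undivided on ${\VR k G}^{(m)}\times 0$ and equals $A$ on ${\VR k G}^{(m)}\times 1$ (a standard mapping-cylinder-style triangulation), assign $\eta(v,0):=v$ on the genuine group vertices and $\eta(w,1):=\varepsilon(w)$ on the $A$-vertices, and rerun the distance estimate — this is more than relabelling, since the two ends now carry different vertex sets, but the fullness argument goes through with a modestly adjusted bound.
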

\begin{proof}
 We construct $\eta$ inductively on the $q$-skeleton starting with $q=0$. Since $\varepsilon_0$ is $G$-equivariant $t:=\varepsilon_0(1_G)$ determines the map. Then $\eta_0(g,0):=g$ and $\eta_1(g,1):=gt$ is a $G$-equivariant simplicial map ${\VR k G}^{(0)}\times\Delta^1\to {\VR {l(t)} G }^{(1)}$. If $q>0$ suppose $\eta_0,\ldots,\eta_{q-1}$ have already been constructed. For each $\sigma\in(1_G\times G^q)\cap \Delta^q_k$ there exists barycentric subdivision $T$ of $\Delta^q\times \Delta^1$ that when restricted to $\Delta^q\times 0$ is the domain of $\sigma$, when restricted to $\Delta^q\times 1$ is the domain of $\varepsilon_q\circ\sigma$ and when restricted to $\partial_q\Delta^q\times \Delta^1$ is the domain of $\eta_{q-1}\circ ((\sigma\circ\partial_q)\times\id_{\Delta^1})$. We can choose this $T$ so that every vertex in $T^{(0)}$ already appears in $\Delta^q\times\{0,1\}\cup \partial_q\Delta^q\times \Delta^1$. The so defined mapping $\eta'_q$ extends $\eta_{q-1}\circ \sigma\circ \partial_q$ and joins $\varepsilon_q$ to $id_q$. Since $((1_G\times G^q)\cap \Delta^q_k)\times \Delta^1$ is a finite simplicial set its image under $\eta'_q$ is also finite. Thus $\im \eta'_q\subseteq {\VR l G }^{(q+1)}$ for some $l\ge 0$. Then $\eta_q\in \Map({\VR k G}^{(q)}\times \Delta^1,{\VR l G}^{(q+1)})$ is obtained from $\eta'_q$ by extending $G$-equivariantly. If $\tau:=(y_0,\ldots,y_q)\in \Delta^q_k$ is a simplex then $\sigma:=y_0^{-1}\tau\in (1_G\times G^q)\cap \Delta^q_k$. We check each restriction:
 \[
  \eta_q\circ(\tau\times0)=\eta_q\circ(y_0\sigma\times0)=y_0\eta_q(\sigma\times0)=y_0\sigma=\tau
 \]
and
\[
  \eta_q\circ(\tau\times1)=\eta_q\circ(y_0\sigma\times1)=y_0\eta_q(\sigma\times1)=y_0\varepsilon_q\circ\sigma=\varepsilon_q\circ\tau
\]
and
\[
 \eta_q\circ ((\tau\circ\partial_q)\times\id_{\Delta^1})=y_0 \eta_q\circ ((\sigma\circ\partial_q)\times\id_{\Delta^1})=\eta_{q-1}\circ ((\tau\circ\partial_q)\times\id_{\Delta^1}).
\]
\end{proof}

A vector $(k_0,\ldots, k_m)$ is said to be a homotopical connecting vector for 
$G$ if $\pi_q(\VR{k_q}G)$ vanishes in $\pi_q(\VR{k_{q+1}}G)$ for every 
$q=0,\ldots,m-1$.

\begin{thm}
\label{thm:muhomotopy}
If $G$ is a countable group then the following are equivalent:
\begin{enumerate}
 \item $G$ is of type $\Ftp m$;
 \item there exists a homotopical connecting vector $(k_0,\ldots,k_m)$ for $G$;
 \item there exists $K_0$ and a $G$-equivariant mapping $\mu\in \Map(\E G^{(m)},\E G^{(m)})$ with image in ${\VR{K_0}G}^{(m)}$.
\end{enumerate}
\end{thm}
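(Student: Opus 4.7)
The plan is to prove the equivalence by a three-step cycle $(1)\Rightarrow(2)\Rightarrow(3)\Rightarrow(1)$ that mirrors the structure of Theorem~\ref{thm:chainendo-fpm}, replacing chain complexes by simplicial sets, $\ZZ G$-linear chain homotopy by Lemma~\ref{lem:homotopicalhomotopy}, and acyclicity of $(\ZZ[G^{q+1}],\partial_q)$ by contractibility of $|\E G|$. The implication $(1)\Rightarrow(2)$ is immediate: by Theorem~\ref{thm:metric} together with Definition~\ref{dfn:finiteprops-metric}, essential triviality of each ind-group $\pi_q(\VR k G)_k$ for $q=0,\ldots,m-1$ lets me iteratively pick $k_0\le k_1\le\cdots\le k_m$ so that $\pi_q(\VR{k_q}G)$ vanishes in $\pi_q(\VR{k_{q+1}}G)$ in every dimension.

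For $(2)\Rightarrow(3)$, I construct $\mu$ by $G$-equivariant induction on the skeleta of $\E G^{(m)}$. Set $\mu_0:=\id$, which takes values in $\VR{k_0}G^{(0)}$. At stage $1\le q\le m$, and for each orbit representative $\sigma=(1_G,g_1,\ldots,g_q)$, the inductively defined $\mu\circ\partial_q\sigma$ is already a simplicial map from (a subdivision of) $\partial\Delta^q$ into $\VR{k_{q-1}}G^{(m)}$ whose geometric realization represents a class in $\pi_{q-1}(\VR{k_{q-1}}G)$. The connecting-vector hypothesis kills this class in $\pi_{q-1}(\VR{k_q}G)$, so it extends continuously to $|\Delta^q|\to|\VR{k_q}G|$. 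The simplicial approximation theorem applied rel boundary promotes this extension to a simplicial map from some iterated barycentric subdivision of $\Delta^q$ into $\VR{k_q}G$; since the domain has dimension $q\le m$, the image automatically lies in the $m$-skeleton. Extending $G$-equivariantly and setting $K_0:=k_m$ yields the required $\mu$.

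For $(3)\Rightarrow(1)$, fix $k\ge 0$ and $q\le m-1$ and take $\varphi:S^q\to|\VR k G|$ representing an element of $\pi_q(\VR k G)$. Contractibility of $|\E G|$ extends $\varphi$ to $\bar\varphi:D^{q+1}\to|\E G|$, and cellular approximation lets me assume $\bar\varphi$ lands in $|\E G|^{(q+1)}\subseteq|\E G|^{(m)}$. Then $|\mu|\circ\bar\varphi:D^{q+1}\to|\VR{K_0}G|^{(m)}$ is a disc bounding $|\mu|\circ\varphi$. Lemma~\ref{lem:homotopicalhomotopy} supplies a $G$-equivariant homotopy $\eta\in\Map({\VR k G}^{(m)}\times\Delta^1,{\VR l G}^{(m+1)})$ joining $\mu$ to the identity, and applying $|\eta|$ to $\varphi\times\id_{\Delta^1}$ provides a homotopy from $\varphi$ to $|\mu|\circ\varphi$ inside $|\VR l G|^{(m+1)}$. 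Gluing the disc to this homotopy produces a null-homotopy of $\varphi$ in $\VR{\max(K_0,l)}G$, proving that $\pi_q(\VR k G)_k$ is essentially trivial and hence that $G$ is of type $\Ftp m$.

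The main obstacle is a bookkeeping issue in $(2)\Rightarrow(3)$: the simplicial approximations produced for the infinitely many $G$-orbit representatives at a given dimension $q$ may a priori require different numbers of barycentric subdivisions of $\Delta^q$, while membership in $\Map(\E G^{(m)},\E G^{(m)})$ requires a single uniform finite-subdivision refinement of the domain. I plan to resolve this by showing that the subdivision depth needed at stage $q$ can be bounded by a quantity depending only on $q$, $k_{q-1}$, and $k_q$---a quantitative form of simplicial approximation for extensions into the fixed subcomplex $\VR{k_q}G\subseteq\E G$---so that a common refinement accommodates every orbit simultaneously, and $G$-equivariant extension then produces a bona fide element of $\Map(\E G^{(m)},\E G^{(m)})$.
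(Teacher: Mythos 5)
Your argument follows the same route as the paper's proof: $(1)\Rightarrow(2)$ is immediate, $(2)\Rightarrow(3)$ proceeds by $G$-equivariant skeleton-by-skeleton extension using the connecting vector to supply fillers, and $(3)\Rightarrow(1)$ composes a disc coming from contractibility of $\E G$ with $\mu$ and closes up using the homotopy from Lemma~\ref{lem:homotopicalhomotopy}. Your $(3)\Rightarrow(1)$ handles all $q\le m-1$ at once rather than running the paper's induction on type $\Ftp{m-1}$; that is a mild simplification and is fine.

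The bookkeeping concern you raise at the end of $(2)\Rightarrow(3)$ is genuine and is the one place your plan goes wrong. You propose to bound the number of barycentric subdivisions of $\Delta^q$ by a quantity depending only on $q$, $k_{q-1}$ and $k_q$; this already fails at $q=1$. With $\mu_0=\id$, the filler for the orbit representative $(1_G,g_1)$ is a simplicial map $\SD^j\Delta^1\to\VR{k_1}{G}$, hence an edge-path of at most $2^j$ steps, each step between points at distance at most $k_1$, from $1_G$ to $g_1$. This forces $2^j k_1\ge l(g_1)$, and since word length is unbounded on an infinite group there is no $j$ that works for all orbits. The same estimate shows that if membership in $\Map(\E G^{(m)},\cdot)$ literally required a single global iterate $\SD^j$ applied to all of $\E G^{(m)}$, then statement~3 would be false for every infinite $G$; so the refinement in the definition of $\Map$ has to be read simplex-by-simplex. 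What actually makes the orbit-by-orbit construction produce an honest element of $\Map(\E G^{(m)},\E G^{(m)})$ is exactly this freedom: each $G$-orbit of $q$-simplices is allowed its own finite subdivision depth, chosen $G$-equivariantly and compatibly on faces, which is what the relative simplicial approximation theorem delivers. No uniform bound is needed, nor is one available, and you should replace the planned quantitative lemma by this observation.
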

\begin{proof}
 That statement 1 implies statement 2 is obvious.

  Now we show statement 2 implies statement 3. Suppose statement 2 holds. There 
exists a homotopical connecting vector $(k_0,\ldots,k_m)$ for $G$. We construct 
$\mu_q$ inductively on skeleta with the property that $\im \mu_q\subseteq 
{\VR{k_q}G}^{(q)}$. On $EG^{(0)}$ the mapping $\mu_0$ is defined to be the 
identity. If $\mu_0,\ldots,\mu_{q-1}$ have been constructed and 
$\sigma:=(1_G,x_1,\ldots,x_m)\in \Delta^q_k$ is a simplex then $\im 
\mu_{q-1}\circ\sigma\circ\partial\subset \VR {k_{q-1}}G$. Since 
$\pi_{q-1}(\VR{k_{q-1}}G)$ vanishes in $\pi_{q-1}(\VR{k_q}G)$ there exists 
$\phi_\sigma\in \Map(\Delta^q,\VR{k_q}G)$ with $\phi_\sigma\circ 
\partial=\mu_{q-1}\circ\sigma\circ \partial$. If $\tau:=(x_0,\ldots,x_q)\in 
\Delta^q_k$ is a simplex then $\sigma:=(1_G,x_0^{-1}x_1,\ldots,x_0^{-1}x_q)$ is 
also a simplex with $x_0\sigma=\tau$. We define 
  \[
   \mu_q\circ \tau:=x_0\phi_\sigma. 
  \]
This mapping has image in ${\VR{k_q}G}^{(q)}$ and extends $\mu_{q-1}$.

We now show that statement 3 implies statement 1. Suppose there is a mapping $\mu$ that sends everything to one index $K_0$. And also suppose we did already show that $G$ is of type $\Ftp {m-1}$. If $\alpha:S^{m-1}\to |\VR k G|$ is a continuous map then there exists $\tilde\alpha\in \Map(\partial\Delta^m,\VR k G)$ homotopic to $\alpha$ by the simplicial approximation theorem. Then $\mu_{m-1}\circ \tilde\alpha$ has image in $\VR{K_0}G$. By Lemma~\ref{lem:homotopicalhomotopy} $\mu\circ\tilde\alpha$ is homotopic to $\tilde\alpha$ with homotopy living in $\VR{L_0}G$ for some $L_0\ge 0$. Since $\E G$ is contractible $\tilde \alpha$ is homotopic to a constant mapping $\ast$ in $\E G$ denote by $\lambda$ the homotopy connecting them. Then $\mu\circ \tilde \alpha$ is homotopic to a constant mapping $\mu\circ \ast$. The homotopy connecting them, $\mu\circ \lambda$ has image in $\VR{K_0}G$. As a result $\tilde \alpha$ is null-homotopic in $\VR{\max(k,K_0,L_0)}G$. Thus $\pi_{m-1}(\VR k G)$ vanishes in $\pi_{m-1}(\VR{\max(k,K_0,L_0)}G)$. This implies $G$ is of type $\Ftp m$.
\end{proof}

\begin{thm}
\label{thm:crit2}
 If $G$ is a group of type $\Ftp m$ and $\chi:G\to \RR$ a nonzero character then the following are equivalent:
 \begin{enumerate}
 \item $\chi\in \Sigma^m(G)$;
 \item there exists $(k_0,\ldots, k_m)\in \NN^{m+1}$ such that $\pi_q(\VR{k_q} {G_\chi})$ vanishes in $\pi_q(\VR{k_{q+1}}{G_\chi})$;
 \item there exists a $G$-equivariant map $\varphi\in \Map({\VR k G}^{(q)},{\VR k G}^{(q)})$ with $v(\varphi_q\circ\sigma)-v(\sigma)>0$ for every $\sigma\in \Delta^q_k$, $q=0,\ldots,m$ and $k$ large enough. 
 \end{enumerate}
\end{thm}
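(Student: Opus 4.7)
The plan is to follow the template of Theorem~\ref{thm:sigmam-crit} (the homological analog), replacing chain-level constructions with their simplicial counterparts on Vietoris--Rips complexes. The implication $1\Rightarrow 2$ is essentially a translation: by Theorem~\ref{thm:homotopic_metric}, $\chi\in\Sigma^m(G)$ is equivalent to $G_\chi$ being of type $\Ftp m$, which by Definition~\ref{dfn:finiteprops-metric} unwinds to the essential triviality of $\pi_q(\VR k {G_\chi})_k$ for $q=0,\ldots,m-1$, which in turn is precisely the existence of a homotopical connecting vector $(k_0,\ldots,k_m)$ for $G_\chi$.

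For $2\Rightarrow 3$ I would construct $\varphi_q$ inductively on skeleta, following the scheme in the proof of Theorem~\ref{thm:muhomotopy}. Fix $t\in G$ with $\chi(t)>0$ and set $\varphi_0(g):=gt$, which is $G$-equivariant and raises $v$ by $\chi(t)$. Assume $\varphi_0,\ldots,\varphi_{q-1}$ are already defined, $G$-equivariant, $v$-raising by $\chi(t)$ and with image in ${\VR{k_{q-1}}G}^{(q-1)}$. For each orbit representative $\sigma=(1_G,g_1,\ldots,g_q)\in\Delta^q_k$, the composite $\varphi_{q-1}\circ\sigma\circ\partial_q$ has image in $G_{v(\sigma)+\chi(t)}$. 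By Lemma~\ref{lem:GL}, the coarse type of $G_L$ does not depend on $L$, so hypothesis 2 for $G_\chi$ transfers to every $G_L$; it provides a simplicial filling $\phi_\sigma\in\Map(\Delta^q,\VR{k_q}{G_{v(\sigma)+\chi(t)}})$ of $\varphi_{q-1}\circ\sigma\circ\partial_q$. Setting $\varphi_q\circ\tau:=x_0\cdot\phi_{x_0^{-1}\tau}$ for a generic $\tau=(x_0,\ldots,x_q)\in\Delta^q_k$ then extends $\varphi_q$ $G$-equivariantly over the $q$-skeleton, and since only finitely many orbit representatives exist in $\Delta^q_k$ we may absorb all the needed parameters $k_q$ into a single uniform constant.

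For $3\Rightarrow 1$ I would iterate $\varphi_m$ to push fillings into $G_\chi$, paralleling the cycle-chasing at the end of Theorem~\ref{thm:sigmam-crit}. Lemma~\ref{lem:homotopicalhomotopy} furnishes a $G$-equivariant homotopy $\eta\in\Map({\VR k G}^{(m)}\times\Delta^1,{\VR{l_1}G}^{(m+1)})$ from $\varphi_m$ to the identity. Let $R:=\min_{\sigma\in(1_G\times G^m)\cap\Delta^m_k}(v(\varphi_m\circ\sigma)-v(\sigma))>0$ and let $K$ be the minimum of $v(\eta\circ(\sigma\times\id_{\Delta^1}))-v(\sigma)$ over the same orbit representatives. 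Given $\alpha\in\Map(\partial\Delta^m,\VR k {G_\chi})$ representing a class in $\pi_{m-1}$, type $\Ftp m$ of $G$ supplies a filling $\beta\in\Map(\Delta^m,\VR{l_2}G)$ of $\alpha$ in the ambient Vietoris--Rips complex. For $n$ large enough that $(n+1)R+v(\beta)\geq 0$, the stacked annular cobordisms $\eta_i:=\varphi_m^{\circ i}\circ\eta\circ(\alpha\times\id_{\Delta^1})$ for $i=0,\ldots,n$ connect $\alpha$ to $\varphi_m^{\circ n+1}\circ\alpha$, and capping with $\varphi_m^{\circ n+1}\circ\beta$ produces a null-homotopy $\tilde\beta$ of $\alpha$ whose $v$-value is at least $\min(K,0)$. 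Hence $\pi_{m-1}(\VR k {G_\chi})$ vanishes in $\pi_{m-1}(\VR{l_3}{G_{\min(K,0)}})$, and Lemma~\ref{lem:GL} converts this back to vanishing in $\pi_{m-1}(\VR{l_3'}{G_\chi})$, so the ind-system is essentially trivial.

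The main obstacle will be the subdivision bookkeeping in $3\Rightarrow 1$: since $\Map$ allows maps to incorporate barycentric subdivisions, gluing the annular cobordisms $\eta_i$ together with the cap $\varphi_m^{\circ n+1}\beta$ into a single simplicial map on a common refinement of $\Delta^m$ requires matching subdivisions on shared interfaces, and one must track that $\varphi_m$ remains defined on the image of $\eta$ by selecting $k$ at the outset larger than $l_1$. A secondary technicality in $2\Rightarrow 3$ is that the connecting vector is stated for $G_\chi$ but the inductive step needs fillings in each $G_{v(\sigma)+\chi(t)}$; the workaround is right-translation by a suitable power of $t$, which by Lemma~\ref{lem:GL} inflates the Vietoris--Rips parameter by a bounded amount and so is harmless in the end.
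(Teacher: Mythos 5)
Your proposal follows the same structure as the paper's proof: $1\Rightarrow 2$ via Theorem~\ref{thm:homotopic_metric}, $2\Rightarrow 3$ by the inductive skeletal construction of an equivariant $v$-raising map via fillings over $G$-orbit representatives (as in Theorem~\ref{thm:muhomotopy}), and $3\Rightarrow 1$ by iterating $\varphi$ on a filling of a given $(m-1)$-sphere and connecting $\alpha$ to $\varphi^{\circ n}\alpha$ through the mapping-cylinder homotopies from Lemma~\ref{lem:homotopicalhomotopy}, keeping a uniform lower bound $\min(K,0)$ on valuation. The secondary technicality you flag (transferring the connecting vector from $G_\chi$ to $G_{v(\sigma)+\chi(t)}$) is also glossed in the paper, and your right-translation workaround is a sound way to handle it.
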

\begin{proof}
Statement 1 obviously implies statement 2.

Now we show statement 2 implies statement 3. Suppose statement 2. Then each homotopy group $\pi_{q-1}(\VR{k_{q-1}}{G_\chi})$ vanishes in $\pi_{q-1}(\VR{k_q}{G_\chi})$ for $q=1,\ldots,m$. We define $\varphi_q$ inductively on the $q$-skeleton with the property $\varphi_q\in \Map(\E G^{(q)},\VR{k_q}G)$ where the valuation is raised by $\chi(t)$ where $t\in G$ is fix with $\chi(t)>0$. If $q=0$ then $\varphi_0: \E G^{(0)}\to \E G^{(0)}$ is defined to be the mapping $g\mapsto gt$. Then $\varphi_0\in \Map(\E G^{(0)},{\VR{k_0}G}^{(0)})$ and
\[
 v(\varphi_0(g))-v(g)=v(gt)-v(g)=\chi(t)>0.
\]
Now suppose $\varphi_0,\ldots,\varphi_{q-1}$ have been constructed. If $\sigma:=(1_G,x_1,\ldots,x_q)\in \E G^{(q)}$ then $\varphi_{q-1}\circ\sigma\circ \partial$ has image in $\VR{k_{q-1}}{G_{v(\sigma)+\chi(t)}}$. Since $\pi_{q-1}(\VR{k_{q-1}}{G_{v(\sigma)+\chi(t)}})$ vanishes in $\pi_{q-1}(\VR{k_q}{G_{v(\sigma)+\chi(t)}})$ there exists $\phi_\sigma\in \Map(\Delta^q,\VR{k_q}{G_{v(\sigma)+\chi(t)}})$ with boundary $\varphi_{q-1}\circ \sigma\circ\partial$. If $\tau:=(x_0,\ldots,x_q)\in \E G^{(q)}$ then $\sigma:=(1_G,x_0^{-1}x_1,\ldots,x_0^{-1}x_q)\in \E G^{(q)}$ is a simplex with$\tau=x_0\sigma$. Define
\[
 \varphi_q\circ\tau:=x_0\phi_\sigma.
\]
Then $\varphi_q\in \Map(\E G^{(q)},\VR{k_q}G)$ with
\[
 v(\varphi_q\circ\tau)-v(\tau)=v(\varphi_q\circ\sigma)-v(\sigma)\ge v(\sigma)+\chi(t)-v(\sigma)=\chi(t)>0.
\]

 Now we show that statement 3 implies statement 1. Suppose there exists a $G$-equivariant map $\varphi$ that raises valuation. If $\alpha:S^{m-1}\to |\VR k{G_\chi}|$ is a continuous map then there exists $\tilde \alpha\in \Map(\partial \Delta^m,\VR k{G_\chi})$ homotopic to $\alpha$ by simplicial approximation. Since $G$ is of type $\Ftp m$ there exists $l\ge0$ with $\pi_{m-1}(\VR k G)$ vanishes in $\pi_{m-1}(\VR l G)$. Then there exists $\phi\in \Map(\Delta^m, \VR l G)$ with boundary $\tilde \alpha$. Then $-v(\im\phi)\le n L_\varphi$ for some $n$ where
 \[
  L_\varphi:=\min_{\sigma=(1_G,x_1,\ldots,x_m)\in\Delta^m_l}(v(\varphi(\sigma))-v(\sigma))>0.
 \]
Then $\varphi^{\circ n}\circ\phi\in \Map(\Delta^m,\VR l{G_\chi})$ is a null-homotopy of $\varphi^{\circ n}\circ\tilde\alpha$. It remains to show that $\varphi^{\circ n}\circ\tilde \alpha$ is homotopic to $\tilde \alpha$. By Lemma~\ref{lem:homotopicalhomotopy} there exists a $G$-equivariant homotopy $\eta\in \Map({\VR l G}^{(m)}\times \Delta^1,{\VR L G}^{(m+1)})$ joining $\varphi$ to the inclusion. If $\tau:=((x_0,i_0),\ldots,(x_m,i_m))\in {\VR l{G_\chi}}^{(m-1)}\times\Delta^1$ then
\[
 v(\eta\circ\tau)\ge v(\eta\circ\tau)-v(\tau)\ge \min_{\sigma=((1_G,i_0),(x_1,i_1),\ldots,(x_m,i_m))\in\Delta^m_l\times\Delta^1}(v(\eta\circ\sigma)-v(\sigma))=:K.
\]
If $0\le i\le n-1$ then $\eta\circ((\varphi^{\circ i}\circ \tilde\alpha)\times \id_{\Delta^1})\in \Map(\partial \Delta^m\times \Delta^1,\VR L{G_K})$ is a homotopy joining $\varphi^{\circ i}\circ\tilde \alpha$ to $\varphi^{\circ i+1}\circ \tilde \alpha$.
\end{proof}

 \IncMargin{1em}
\begin{algorithm}[t!]
\SetKwFunction{search}{search}
 \SetKwInOut{Input}{input}
 \SetKwInOut{Output}{output}
 \Input{A group $G$ of type $\mathrm{F}_2$ with a homotopical connecting vector 
$(n_0,n_1,n_2)$, a non-zero character $\chi:G\to \RR$}
 \Output{``yes, $\chi\in \Sigma^2(G)$'' or ``maybe''}
 Pick $t\in G$ with $\chi(t)>0$\;
 Define $n:=\max(n_0,n_1,n_2)$\;
 Define function $\varphi_0:\E G^{(0)}\to \E G^{(0)}$ by $g\mapsto gt$\;
 \For{$q\leftarrow 1$ \KwTo $2$}{
 \ForEach{$\sigma:=(1_G,g_1,\ldots,g_q)\in \Delta^q_n$}{
 \search for $\phi\in \Map(\Delta^q,\VR n{G_{\chi(t)+v(\sigma)}})$ with $\phi\circ\partial=\varphi_{q-1}\circ\sigma\circ\partial$\;
 \If{\search terminates}{define $\varphi_q\circ\sigma:=\phi$\;}
 \Else{\KwRet{``maybe''}\;}
 }
 }
 \KwRet{``yes''}
 \vspace{0.1cm}
  \caption{Sigma-2-criterion, homotopical version}
  \label{algo:sigma2}
 \end{algorithm}
Both for-loops in Algorithm~\ref{algo:sigma2} run over a finite set: $\{1,2\}$ and $\{(1_G,g_1,\ldots,g_q)\in \Delta^q_n\}$ are computable and finite. Also it is useful if there is a normal form for elements in $G$ but not necessary. We can always choose $n_0=0$ and $n_1=1$. If $0\not=\chi\in \Sigma^2(G)$ then there exists some $n_2\ge 1$ so that the \search for $\phi$ does halt. 
 
\begin{thm}
\label{thm:algo2}
 Let $G$ be a group with homotopical connecting vector $(k_0,k_1,k_2)$ with a maximum at $k:=k_2$ and solvable word-problem. If $\chi:G\to \RR$ is a non-zero character then the following are equivalent:
 \begin{enumerate}
  \item $\chi\in\Sigma^2(G)$;
  \item there exists a $G$-equivariant mapping $\varphi\in\Map({\VR k G}^{(2)},\VR k G)$  with $v(\varphi(\sigma))-v(\sigma)>0$ for every $\sigma\in \Delta^2_k$;
  \item Algorithm~\ref{algo:sigma2} terminates with output ``yes''.
 \end{enumerate}
\end{thm}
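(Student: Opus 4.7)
The plan is to mirror the proof of Theorem~\ref{thm:algo1} but in the homotopical setting, substituting Theorem~\ref{thm:crit2} for Theorem~\ref{thm:sigmam-crit} and Theorem~\ref{thm:muhomotopy} for Theorem~\ref{thm:chainendo-fpm}. The equivalence splits naturally into the geometric content (1)~$\Leftrightarrow$~(2) and the effectiveness part (2)~$\Leftrightarrow$~(3). Throughout, fix $t\in G$ with $\chi(t)>0$.

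For (1)~$\Rightarrow$~(2): assuming $\chi\in\Sigma^2(G)$, Theorem~\ref{thm:crit2} furnishes, for every sufficiently large $n$, a $G$-equivariant $\varphi_n\in\Map({\VR n G}^{(2)},{\VR n G}^{(2)})$ raising the valuation, with image contained in some ${\VR l G}^{(2)}$ (here $l$ depends only on the witness found via Theorem~\ref{thm:crit2}). If $k\ge l$ we are done by post-composing with the inclusion. If $k<l$, invoke the fact that $k=\max(k_0,k_1,k_2)$ is the max of a homotopical connecting vector: by Theorem~\ref{thm:muhomotopy} there is a $G$-equivariant $\mu\in\Map({\VR l G}^{(2)},{\VR l G}^{(2)})$ with image in ${\VR k G}^{(2)}$. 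Setting $L:=\min_{\sigma\in(1_G\times G^2)\cap \Delta^2_l}(v(\mu(\sigma))-v(\sigma))$ and choosing $i\in\NN$ with $i\chi(t)+L>0$, the composition $\mu\circ\varphi_l^{\circ(i+1)}$ restricted to ${\VR k G}^{(2)}$ lands in ${\VR k G}^{(2)}$ and strictly raises $v$, as needed.

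For (2)~$\Rightarrow$~(1): given $\varphi$ on ${\VR k G}^{(2)}$ raising $v$ by at least $\chi(t)$, fix $n\ge k$ and use the connecting vector together with Theorem~\ref{thm:muhomotopy} to obtain $\mu\in\Map({\VR n G}^{(2)},{\VR n G}^{(2)})$ with image in ${\VR k G}^{(2)}$. With $K$ denoting the (finite) infimum of $v(\mu(\sigma))-v(\sigma)$ over basepointed simplices and choosing $i$ with $i\chi(t)\ge -K$, the map $\varphi^{\circ(i+1)}\circ\mu$ is a $G$-equivariant self-map of ${\VR n G}^{(2)}$ raising $v$ by at least $\chi(t)$. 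Since $n\ge k$ was arbitrary, Theorem~\ref{thm:crit2} applies and gives $\chi\in\Sigma^2(G)$.

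For (2)~$\Leftrightarrow$~(3): if (3) succeeds, the $\phi_\sigma$ returned by the inner \search, assembled $G$-equivariantly over the finitely many orbit representatives $(1_G,g_1,\ldots,g_q)\in\Delta^q_n$, give exactly the $\varphi$ required by (2). Conversely, assuming (2), the required filler $\phi$ exists for each basepointed $\sigma$, and solvability of the word problem allows us to construct $\Gamma(G,X^{\le n})$ and enumerate candidate simplicial maps from iterated barycentric subdivisions of $\Delta^q$ into $\VR n{G_{\chi(t)+v(\sigma)}}$; hence \search halts for each $\sigma$, and Algorithm~\ref{algo:sigma2} returns ``yes''.

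The main technical nuisance will be composing maps inside $\Map$, since each $\varphi^{\circ j}$ is only defined on some iterated barycentric subdivision of the source; one passes to a common subdivision, on which simplicial maps extend canonically, so valuations are unchanged and the $v$-raising estimate is preserved. The only other delicate point is verifying that the iterate $\varphi^{\circ(i+1)}\circ\mu$ in the (2)~$\Rightarrow$~(1) step is $G$-equivariant and well-defined on ${\VR n G}^{(2)}$, which follows because $\mu$ already lands in the sub-complex ${\VR k G}^{(2)}$ where $\varphi$ is defined; after that the argument reduces to the same bookkeeping of valuations used in Theorem~\ref{thm:algo1}.
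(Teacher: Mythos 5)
Your proposal is correct and follows essentially the same route as the paper: it factors into (1)$\Leftrightarrow$(2) via Theorem~\ref{thm:crit2} and Theorem~\ref{thm:muhomotopy} with the same compositions $\mu\circ\varphi_l^{\circ(i+1)}$ (restricted to ${\VR k G}^{(2)}$) for (1)$\Rightarrow$(2) and $\varphi^{\circ(i+1)}\circ\mu$ for (2)$\Rightarrow$(1), followed by the same algorithmic equivalence (2)$\Leftrightarrow$(3) using solvability of the word problem. The remark about passing to a common barycentric refinement when composing elements of $\Map$ is a valid observation on a point the paper leaves implicit, not a deviation.
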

\begin{proof}
We first show statement 1 implies statement 2. Suppose statement 1 that $\chi\in \Sigma^2(G)$. Then Theorem~\ref{thm:crit2} implies there exists a number $l\ge 0$ and for every $n\ge l$ a $G$-equivariant mapping $\varphi_n\in\Map(\VR n G, \VR n G)$ raising valuation and $\im \varphi^{(2)} \subseteq {\VR l G}^{(2)}$. If $k\ge l$ denote by $\iota_{lk}$ the inclusion $\VR l G \subseteq \VR k G$. Then $\iota_{lk}\circ \varphi_k$ is the desired $G$-equivariant mapping. If conversely $k<l$ then since $(k_0,k_1,k_2)$ is a homotopical connecting vector for $G$ Theorem~\ref{thm:muhomotopy} implies for every $n\ge k$ there exists a G-equivariant mapping $\mu_n\in\Map(\VR n G,\VR n G)$ extending the identity on $\ZZ$ with $\im \mu_n^{(2)}\subseteq {\VR k G}^{(2)}$. Then
\begin{align*}
 \inf_{\sigma\in {\VR l G}^{(2)}} (v(\mu_l(\sigma))-v(\sigma))
 &\ge \min_{\sigma\in \Delta^2_l\cap (1_G\times G^2)}(v(\mu_l(\sigma))-v(\sigma))\\
 &=:K\in \mathbb R
\end{align*}
Then $i\chi(t)\ge -K$ for some $i\in \mathbb N$. Then denote by $\iota_{kl}$ the inclusion $\VR k G \subseteq \VR l G$. Then $\mu_l\circ \varphi_l^{\circ i+1}\circ \iota_{kl}$ is the desired $G$-equivariant mapping:
\begin{align*}
 v(\mu_l\circ \varphi_l^{\circ i+1}\circ \iota_{kl}(\sigma))
 &\ge K+v(\varphi_l^{\circ i+1}\circ \iota_{kl}(\sigma))\\
 &\ge K+(i+1)\chi(t)+v(\iota_{kl}(\sigma))\\
 &\ge \chi(t)+v(\sigma)\\
 &>v(\sigma).
\end{align*}

We now assume statement 2 and show statement 1. Let $n\ge k$ be a number. Since $(k_0,k_1,k_2)$ is a homotopical connecting vector for $G$ there exists a $G$-equivariant mapping $\mu\in\Map(\VR n G,\VR n G)$ with $\mbox{im }\mu^{(2)}\subseteq {\VR k G}^{(2)}$. Statement 2 also provides us with a $G$-equivariant mapping $\varphi\in\Map(\VR k G,\VR k G)$ with $v(\varphi(\sigma))-v(\sigma)\ge \chi(t)$ for every $\sigma\in {\VR k G}^{(2)}$. Then as before
 \[
  v(\mu(x))-v(x)\ge K
 \]
for every $x=(1_G,g_1,g_q)\in\Delta^2_n$ for some $K\in \RR$. Then $i\chi(t)\ge -K$ for some $i\in \NN$. Then $\varphi^{\circ i+1}\circ \mu_n\in\Map(\VR n G,\VR n G)$ is a $G$-equivariant mapping with
\[
 v(\varphi^{\circ i+1}\circ \mu_q(\sigma))\ge (i+1)\chi(t)+v(\mu_q(\sigma))\ge (i+1)\chi(t)+K+v(\sigma)\ge \chi(t)+v(\sigma)
\]
and $(\mbox{im }\varphi^{\circ i+1}\circ \mu_n )^{(2)}\subseteq {\VR k G}^{(2)}$. By Theorem~\ref{thm:crit2} this proves that $\chi\in \Sigma^2(G)$.
 
If we assume statement 3 then Algorithm~\ref{algo:sigma2} terminates with output ``yes''. The data collected during the computation provides us with a $G$-equivariant mapping that satisfies the requirements of statement 2.

Now if we assume statement 2 then the chain endomorphism sought-for in Algorithm~\ref{algo:sigma2} does exist. Since the word problem is solvable in $G$ we can construct the Cayley graph and ultimately also the Vietoris-Rips complex which is countable data since the group is finitely generated. This way the function \search if implemented correctly does terminate. And so does Algorithm~\ref{algo:sigma2}.
 
\end{proof} 
 
 Now we discuss the coordinates of characters for which $\varphi$ is a proof that the character belongs to $\Sigma^m$. If $\varphi\in \Map(\VR k G,\VR k G)$ is a $G$-equivariant mapping define
 \begin{align*}
 u_\varphi:\Hom(G,\RR)&\to \RR\\
 \chi&\mapsto \min_{\substack{\bar x=(1_G,g_1,\ldots,g_q)\in 
\Delta^q_k,\\q=0,\ldots,m}}\left(\min_{g\in \varphi_q(\bar x)^{(0)}}\langle 
w(g),u(\chi)\rangle-\min_{g\in \bar x^{(0)}}\langle w(g),u(\chi)\rangle\right).
\end{align*}
Now $u_\varphi$ is a continuous mapping so 
\[
 U(\varphi):=u_\varphi^{-1}(0,\infty)
\]
is open.

\begin{prop}
 Let $G$ be a group with homotopical connecting vector $(k_0,\ldots,k_m)$. If 
for 
$k:=(k_0,\ldots,k_m)$ there exists a $G$-equivariant mapping $\varphi\in 
\Map(\VR k G,\VR k G)$ then $U(\varphi)\subseteq \Sigma^m(G)$.
\end{prop}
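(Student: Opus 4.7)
The plan is to mimic the proof of the analogous homological proposition just given. Suppose $\chi \in U(\varphi)$, so by definition $u_\varphi(\chi) > 0$. The identity $\langle w(g), u(\chi)\rangle = \chi(g)$ established when the coordinates were introduced lets me rewrite the defining expression of $u_\varphi(\chi)$ as
\[
\min_{\substack{\bar x = (1_G, g_1, \ldots, g_q) \in \Delta^q_k \\ q = 0, \ldots, m}} \bigl( v(\varphi_q \circ \bar x) - v(\bar x) \bigr),
\]
so the hypothesis immediately becomes: $v(\varphi_q \circ \bar x) - v(\bar x) > 0$ for every basic simplex $\bar x \in (1_G \times G^q) \cap \Delta^q_k$ and every $q \le m$.

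Next, I would promote this strict inequality from basic simplices to all simplices of $\Delta^q_k$ using $G$-equivariance. Any $\tau = (x_0, \ldots, x_q) \in \Delta^q_k$ factors as $\tau = x_0 \sigma$ with $\sigma = (1_G, x_0^{-1}x_1, \ldots, x_0^{-1}x_q) \in (1_G \times G^q) \cap \Delta^q_k$. Since left translation by $x_0$ is a simplicial automorphism of $\E G$ that permutes the vertices appearing in the image of $\varphi_q \circ \sigma$, one has $v(\varphi_q \circ \tau) = v(x_0 \cdot \varphi_q \circ \sigma) = \chi(x_0) + v(\varphi_q \circ \sigma)$ and likewise $v(\tau) = \chi(x_0) + v(\sigma)$. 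Subtraction yields $v(\varphi_q \circ \tau) - v(\tau) = v(\varphi_q \circ \sigma) - v(\sigma) > 0$ for every $\tau \in \Delta^q_k$, $q = 0, \ldots, m$.

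To conclude, the existence of a homotopical connecting vector $(k_0, \ldots, k_m)$ forces $G$ to be of type $\Ftp m$ by Theorem~\ref{thm:muhomotopy}, so $\varphi$ now satisfies statement~3 of Theorem~\ref{thm:crit2}; applying the implication (3)$\Rightarrow$(1) of that theorem delivers $\chi \in \Sigma^m(G)$. The main subtlety to handle carefully is that $\varphi$ is really a class of simplicial maps defined only after some finite barycentric subdivision of $\VR k G^{(m)}$, so $\varphi_q \circ \bar x$ is a simplicial map from a subdivision of $\Delta^q$ rather than a single simplex; the valuation $v(\varphi_q \circ \bar x)$ must be read as the minimum of $\chi$ over all vertices appearing in that image. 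Once $v$ is interpreted this way, left translation by $x_0$ still bijects image vertex sets, the shift by $\chi(x_0)$ is exact, and the above argument closes without further complication.
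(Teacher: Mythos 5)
Your unpacking of the inequality $u_\varphi(\chi)>0$ into $v(\varphi_q\circ\bar x)-v(\bar x)>0$ is exactly the calculation the paper performs, and the two clarifications you add (how $v$ should be read on a subdivided image, and extending from basic simplices to all of $\Delta^q_k$ by $G$-equivariance) are both correct and well-observed. However, the final step has a gap. You claim $\varphi$ ``now satisfies statement~3 of Theorem~\ref{thm:crit2}'' and apply $(3)\Rightarrow(1)$ of that theorem. But statement~3 of Theorem~\ref{thm:crit2} asks for a $G$-equivariant, valuation-raising map on $\VR{k}{G}$ \emph{for all $k$ large enough}, i.e.\ a whole family of maps; its proof genuinely needs this because after a simplicial-approximation step the filling $\phi$ of $\tilde\alpha$ lives in $\VR{l}{G}$ for some possibly large $l\ge k$, and $\varphi$ must then be applied to $\phi$. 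You only have a single $\varphi$ on the single complex $\VR{k}{G}$ with $k=\max(k_0,\dots,k_m)$.

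This is exactly what the paper's terse appeal to ``a similar argument as in the proof of Theorem~\ref{thm:algo2}'' is hiding. The missing link is: since $(k_0,\dots,k_m)$ is a homotopical connecting vector, Theorem~\ref{thm:muhomotopy} yields for every $n\ge k$ a $G$-equivariant $\mu_n\in\Map(\VR{n}{G},\VR{n}{G})$ with $\im\mu_n^{(m)}\subseteq{\VR{k}{G}}^{(m)}$; one then bounds $v(\mu_n(\sigma))-v(\sigma)\ge K$ from below on basic simplices, picks $i$ with $(i+1)\chi(t)\ge -K+\chi(t)$, and checks that $\varphi^{\circ(i+1)}\circ\mu_n$ raises valuation on $\VR{n}{G}$. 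That gives the required family for all $n\ge k$, after which your invocation of Theorem~\ref{thm:crit2} is legitimate. Without this composition step the proof does not close.
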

\begin{proof}
 Suppose $\chi\in U(\varphi)$. Then
 \begin{align*}
 0
 &<\min_{\bar x=(1_G,g_1,\ldots,g_q)\in \Delta^q_k,q=0,\ldots,m}\left(\min_{g\in \varphi_q(\bar x)^{(0)}}\langle w(g),v(\chi)\rangle-\min_{g\in \bar x^{(0)}}\langle w(g),v(\chi)\rangle\right)\\
 &=\min_{\bar x=(1_G,g_1,\ldots,g_q)\in \Delta^q_k,q=0,\ldots,m}\left(\min_{g\in \varphi_q(\bar x)^{(0)}}\chi(g)-\min_{g\in \bar x^{(0)}}\chi(g)\right)\\
 &=\min_{\bar x=(1_G,g_1,\ldots,g_q)\in 
\Delta^q_k,q=0,\ldots,m}\left(v(\varphi_q(\bar x))-v(\bar x)\right).
 \end{align*}
 By a similar argument as in the proof of Theorem~\ref{thm:algo2} we obtain $\chi\in \Sigma^m(G)$.
\end{proof}

 \begin{thm}
 The subset $\Sigma^m(G)$ is an open cone in $\Hom(G,\RR)$ provided $\Sigma^m\not=0$ and $\Sigma^m\not=\emptyset$.
\end{thm}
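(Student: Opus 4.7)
The plan is to imitate, essentially verbatim, the proof of the analogous statement for $\Sigma^m(G;\ZZ)$ already given in this paper, with the chain endomorphism supplied by Theorem~\ref{thm:sigmam-crit} replaced by the $G$-equivariant simplicial map supplied by Theorem~\ref{thm:crit2}, and with the homological $U(\varphi)$ replaced by the homotopical $U(\varphi)$ defined just before the present theorem statement.

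First I would show that $\Sigma^m(G)\setminus\{0\}$ is open. Given a non-zero character $\chi\in\Sigma^m(G)$, Theorem~\ref{thm:crit2} furnishes some $k\ge 0$ and a $G$-equivariant map $\varphi\in\Map({\VR k G}^{(m)},{\VR k G}^{(m)})$ with $v(\varphi(\sigma))-v(\sigma)>0$ for every $\sigma\in\Delta^q_k$ and $q=0,\ldots,m$. Translating this positivity through the pairing $\langle w(g),u(\chi)\rangle=\chi(g)$ shows that $u_\varphi(\chi)>0$, i.e.\ $\chi\in U(\varphi)$. Continuity of $u_\varphi$ (it is a finite minimum of linear functionals in $u(\chi)$) makes $U(\varphi)$ an open subset of $\Hom(G,\RR)$, and the preceding proposition gives $U(\varphi)\subseteq\Sigma^m(G)$. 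Thus $U(\varphi)$ is an open neighborhood of $\chi$ contained in $\Sigma^m(G)$, and openness of $\Sigma^m(G)\setminus\{0\}$ follows.

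For the cone property I would simply observe that for $\lambda>0$ one has $G_\chi=G_{\lambda\chi}$, so Theorem~\ref{thm:homotopic_metric} gives $\lambda\chi\in\Sigma^m(G)$ if and only if $\chi\in\Sigma^m(G)$; together with the convention that $0\in\Sigma^m(G)$ whenever $\Sigma^m(G)$ is non-empty, this yields the cone condition. No step here is genuinely hard — the only small verification is that the $U(\varphi)$ constructed above really does contain $\chi$ when $\varphi$ is the witness produced by Theorem~\ref{thm:crit2}, which amounts to unwinding the definition of $u_\varphi$ and identifying $\min_{g\in\sigma^{(0)}}\langle w(g),u(\chi)\rangle$ with $v(\sigma)$ for each relevant simplex $\sigma$.
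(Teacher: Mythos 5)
Your proposal is correct and follows essentially the same route as the paper: produce the witness $\varphi$ from Theorem~\ref{thm:crit2}, observe $\chi\in U(\varphi)$ via the pairing $\langle w(g),u(\chi)\rangle=\chi(g)$, invoke the preceding proposition for $U(\varphi)\subseteq\Sigma^m(G)$, and handle the cone property via scale-invariance. You spell out a couple of steps the paper leaves implicit (the identification of $\min_{g\in\sigma^{(0)}}\langle w(g),u(\chi)\rangle$ with $v(\sigma)$, and $G_\chi=G_{\lambda\chi}$ combined with Theorem~\ref{thm:homotopic_metric}), but the argument is the same.
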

\begin{proof}
 If $\chi\not=0$ is a character on $G$ that belongs to $\Sigma^m$ then there exists some $G$-equivariant mapping $\varphi$ that is a witness for $\chi$ belonging to $\Sigma^m$. Then $U(\varphi)$ is an open neighborhood of $\chi$ in $\Sigma^m$. So $\Sigma^m\setminus\{0\}$ is open. Since for every $\lambda>0$ we have $\lambda\chi\in\Sigma^m$ if and only if $\chi\in\Sigma^m$ and also $0\in \Sigma^m$ if $\Sigma^m\not=\emptyset$ the set $\Sigma^m$ is a cone.
\end{proof}

\end{spacing}
\printbibliography

 \bigskip
  \footnotesize
  Elisa Hartmann, \textsc{Faculty of Mathematics, Bielefeld University, D-33615 Bielefeld}\par\nopagebreak
  \textit{E-mail address}, Elisa Hartmann: \texttt{elisa.hartmann@math.uni-bielefeld.de, elisa.hartmann@gmx.net}
\end{document}